\numberwithin{equation}{section}
\newcommand{\A}{\mathbb{A}}
\newcommand{\R}{\mathbb{R}} \newcommand{\CC}{\mathbb{C}}
 \newcommand{\rO}{\mathrm{O}}
\newcommand{\cS}{\mathcal{S}}\newcommand{\cH}{\mathcal{H}}
\newcommand{\cZ}{\mathcal{Z}}
\newcommand{\half}{\frac {1} {2}}
\renewcommand{\c}[1]{\mathcal{#1}}
\newcommand{\f}[1]{\mathfrak{#1}}
\renewcommand{\r}[1]{\mathrm{#1}}
\newcommand{\til}{\widetilde}
\newcommand{\trpz}[1]{{{}^{\mathrm{t}}\negthinspace{#1}}}
\DeclareMathOperator{\rk}{\mathrm{rk}}
\DeclareMathOperator{\GL}{\mathrm{GL}}
\DeclareMathOperator{\Sym}{\mathrm{Sym}}
\DeclareMathOperator{\Sp}{\mathrm{Sp}}
\DeclareMathOperator{\tr}{\mathrm{tr}}
\DeclareMathOperator{\Ind}{\mathrm{Ind}}
\newcommand{\isom}{\cong}
\DeclareMathOperator{\rank}{\mathrm{rank}}
\DeclareMathOperator{\lmod}{\backslash}
\let\Re\undefined
\DeclareMathOperator{\Re}{\mathrm{Re}}
\newcommand{\FJ}{\mathrm{FJ}}
\newcommand{\smatrix}[4]{\ensuremath\bigl( \begin{smallmatrix}
#1&#2\\ #3&#4
\end{smallmatrix} \bigr)}
\newcommand{\form}[2]{\langle{#1},{#2}\rangle}
\newcommand{\REG}{\mathrm{REG}}
\newcommand{\abc}{\mathrm{abc}}
\begin{document}
\Year{2016} %
\Month{January}
\Vol{59} %
\No{1} %
\BeginPage{1} %
\EndPage{XX} %
\AuthorMark{Wu C}
\ReceivedDay{November 22, 2015}
\AcceptedDay{March 8, 2016}
\PublishedOnlineDay{; published online January 22, 2016}
\DOI{10.1007/s11425-015--0770-7} 

\title[Rallis Inner Product Formula]{On a Critical Case of Rallis Inner Product Formula}{}


\author[1]{WU Chenyan}{Corresponding author}

\address[{\rm1}]{Shanghai Center for Mathematical Sciences, Fudan University, 
  220 Handan Rd. Shanghai, {\rm 200433}, China;}
\Emails{chywu@fudan.edu.cn}\maketitle


 {\begin{center}
\parbox{14.5cm}{\begin{abstract}
  Let $\pi$ be a genuine cuspidal representation of the metaplectic
  group of rank $n$. We consider the theta lifts to the orthogonal
  group associated to a quadratic space of dimension $2n+1$. We show a 
  case of regularised Rallis inner product formula that relates the
  pairing of theta lifts to the central value of the Langlands
  $L$-function of $\pi$ twisted by a character. The bulk of this article focuses on proving a case of
  regularised Siegel-Weil formula,  on which the Rallis inner product formula is based and whose proof is missing in the liter  ature.\vspace{-3mm}
\end{abstract}}\end{center}}

 \keywords{ regularised Siegel-Weil formula, Rallis inner product
  formula, theta lift, $L$-function}

 \MSC{11F27, 11F70}

\renewcommand{\baselinestretch}{1.2}
\begin{center} \renewcommand{\arraystretch}{1.5}
{\begin{tabular}{lp{0.8\textwidth}} \hline \scriptsize
{\bf Citation:}\!\!\!\!&\scriptsize Chenyan WU. \makeatletter\@titlehead.
Sci China Math, 2016, 59,
 doi:~\@DOI\makeatother\vspace{1mm}
\\
\hline
\end{tabular}}\end{center}

\baselineskip 11pt\parindent=10.8pt  \wuhao

\section{Introduction}
\label{sec:intro}

In this article we study the case of the regularised Rallis inner
product formula that relates the pairing of theta lifts to the central
value of Langlands $L$-function. We study a case of the regularised
Siegel-Weil formula first, as it is a key ingredient in the proof. It should be pointed out that even though this case of Siegel-Weil formula is stated in several places, for example, in \cite{MR2822859}, a proof for the metaplectic case is never written down and it is not clear if the proof for the symplectic case in \cite{MR1289491} or that for the unitary case in \cite{MR2064052} generalises readily. We need analytic continuation of degenerate Whittaker functionals on  certain submodules $R_n (U_v)$ of the induced representation $I (\chi_\psi\chi,s_0)$ at every local place. This is not yet known for metaplectic groups at archimedean places.  To work around it, we use some lengthy induction arguments that refine the arguments in \cite{MR1411571,MR1863861}. We take no credit in the originality of the method. This case of the Siegel-Weil formula has already been used to prove other theorems. For example, Gan, Qiu and Takeda\cite{MR3279536} used it as the basis for induction in the proof of the Siegel-Weil formula in the so-called second term range. Thus a detailed proof for this well-known case is needed.  Our proof is indebted to various work that preceded it that made clear structures and properties of various objects. 

Let $k$ be a number field and let $\A$ be its adeles. Let $U$ be a
vector space of dimension $m$ over $k$ with  quadratic form $Q$ and
let $r$ denote its Witt index. We consider the reductive dual pair
$H=\rO(U)$ and $G=\Sp(2n)$ where $n$ is the rank of the symplectic
group. We use $\til{G} (\A)$ to denote the metaplectic group which is
the nontrivial double cover of $G(\A)$. We remark that even though there is no algebraic group $\til{G}$, we prefer writing $\til{G} (\A)$ to
$\til{G (\A)}$ for aesthetic reasons.

Set $s_0=(m-n-1)/2$. We review the Siegel-Weil formula in general and
point out what is special with the case $m=n+1$. Let $\Phi$ be in the
space of Schwartz functions $\cS_0(U^n(\A))$ on which
$\til{G} (\A)\times H(\A)$ acts via the Weil representation $\omega$. To talk about Weil representation, in fact, we need to fix an additive character $\psi$ of $k\lmod \A$. Thus $\omega$ depends on $\psi$, but we have suppressed it from notation.
We associate to the Schwartz function $\Phi$ the Siegel-Weil section $f_\Phi(g,s) :=
|a(g)|^{s-s_0}\omega(g)\Phi(0)$ for $g\in \til{G} (\A)$. We refer the reader to  Sec. \ref{sec:notations} for the notation $|a(g)|$ and any other unexplained notation in this introduction. This section lies in the space
of some induced representation
$\Ind_{\til{P} (\A)}^{\til{G} (\A)}\chi\chi_\psi |\ |^s$ where $P$ is a
Siegel parabolic of $G$ and tilde means taking preimage in the
metaplectic group. Here the character $\chi\chi_\psi$ of ${\til{P} (\A)}$ is determined by $U$ and $\psi$. Form the Siegel Eisenstein series
$E(g,s,f_\Phi)$. It can be meromorphically continued to the whole
$s$-plane.  Also form the theta integral $I(g,\Phi)$. The theta
integral is not necessarily absolutely convergent. Kudla and Rallis
treated the regularisation of the theta integral in
\cite{MR1289491}. Their regularised theta
integral involves a complex variable $s$ and may have a simple or
double pole at $s= (m-r-1)/2$.  Ichino\cite{MR1863861} used an element in the Hecke algebra at a local place to regularise the theta integral
and his regularised theta integral $I_\REG(g,\Phi)$ coincides with the leading
term of Kudla-Rallis's regularised theta integral in the range $m\le
n+1$.  As it is the complementary space $U'$ of $U$ that is used in \cite{MR1863861}, in fact, the range $m=\dim U \ge n+1$ is used.   We say $U'$ is the complementary space of
$U$ if $\dim U + \dim U' = 2n+2$ and the two vector spaces are in the
same Witt tower, meaning that $U'$ is $U$ with some hyperbolic planes
stripped away or added on. Then very loosely speaking, the Siegel-Weil formula gives an identity
between the leading term of the Siegel Eisenstein series at $s_0$ and
the regularised theta integral associated to $U'$. A formulation that does not use the complementary space can be found in \cite{MR2822859}.

When the Siegel Eisenstein series and the theta integral are both
absolutely convergent, Weil\cite{MR0223373}
proved the formula in great generality.  In the case where the groups
under consideration are orthogonal group and metaplectic group, Weil's
condition for absolute convergence for the theta integral is $m-r>n+1$
or $r=0$.  The Siegel Eisenstein series $E(g,s,f_\Phi)$ is absolutely
convergent for $\Re s > (n+1)/2$. Thus if $m>2n+2$, $E(g,s,f_\Phi)$ is
absolutely convergent at $s_0=(m-n-1)/2$ and the theta integral is
also absolutely convergent.

Then assuming only the absolute convergence of theta integral, i.e.,
$m-r>n+1$ or $r=0$, Kudla and Rallis in
\cite{MR946349} and \cite{MR961164}
proved that the analytically continued Siegel Eisenstein series is
holomorphic at $s_0$ and showed that the Siegel-Weil formula holds
between the value at $s_0$ of the Siegel Eisenstein series and the
theta integral.

In \cite{MR1289491} Kudla and Rallis
introduced the regularised theta integral to remove the requirement of
absolute convergence of the theta integral. The formula then relates
the residue of Siegel Eisenstein series at $s_0$ with the leading term
of their regularised theta integral. However they worked under the
condition that $m$ is even, in which case the Weil representation
reduces to a representation of the symplectic group.  Note that here
to avoid excessive complication we have excluded the split binary case which
requires a slightly different statement. In a later
paper\cite[Thm. 3.1]{MR1491448} Kudla announced the Siegel-Weil
formula for $m=n+1$ without parity restriction, but he referred
back to \cite{MR1289491} for a proof of the
$m$ odd case. The formula also appears in Yamana's
paper\cite{MR2822859}, which gives a more transparent proof for the
regularised Siegel-Weil formula, but for the case $m=n+1$ he wrote down a proof only for the Siegel-Weil formula with the Eisenstein series on the orthogonal group. Thus for our case with Eisenstein series on the metaplectic group, there is still  no explicit proof.

For $m$ odd Ikeda in \cite{MR1411571} proved
an analogous formula.  The theta integral involved is  associated to the complementary space
$U'$. In the case $m=n+1$, $U'$ is just $U$ and
the Eisenstein series is holomorphic at $s_0=0$. Ikeda's theta
integral does not require regularisation since he assumed that the
complementary space $U'$ of $U$ is anisotropic in the case
$n+1<m<2n+2$ or that $U$ is anisotropic in the case $m=n+1$. The
method for regularising theta integral was generalised by
Ichino\cite{MR1863861} so that $k$ is no longer required
to have a real place as in
\cite{MR1289491}. Instead of using
differential operator at a real place as did
Kudla-Rallis\cite{MR1289491}, he used a Hecke
operator at a finite place. In Ichino's notation the Siegel-Weil
formula is an equation between the residue at $s_0$ of the Siegel
Eisenstein series and the regularised theta integral
$I_{\REG,Q'}(g,\pi_K\pi^{Q'}_{Q}\Phi)$ where $Q'$ is the quadratic
form of the complementary space. He considered the case where
$n+1<m\le 2n+2$ and $m-r\le n+1$ with no parity restriction on
$m$. The interesting case $m=n+1$ with $m$ odd  is still left
open. We will prove this case in Thm.~\ref{thm:Siegel-Weil}.

Next we deduce the critical case of the Rallis inner product formula from this case of the Siegel-Weil formula.
Let $G^\square$ be the `doubled group' of $G$. Thus it is a symplectic group of rank $2n$. 
Let $\pi$ be a genuine cuspidal representation of
$\til {G} (\A)$ and consider theta lifts of $\pi$ from $\til {G} (\A)$ to
$\r{O}(U)$ where $U$ has dimension $2n+1$.
Via the doubling method, to compute the inner products of such theta lifts we
ultimately need to apply the regularised Siegel-Weil formula where the orthogonal group is $\r{O}(U)$ and the metaplectic group is 
$\til { G^\square} (\A)$.  Via \cite{MR892097}, this leads to a central $L$-value.
We show that the pairing of theta lifts can be expressed in terms of  the central value of the Langlands $L$-function of $\pi$
twisted with a  character
(Thm.~\ref{thm:rallis_inner_product}).  We also show a relation
between the non-vanishing of central $L$-value and the non-vanishing of
theta lift in Thm.~\ref{thm:nonvanishing_theta_lifts}.  The arithmetic
inner product formula which involves the central derivative of 
$L$-function will be part of my future work. 

The idea of the proof of the regularised Siegel-Weil formula
originates from\cite{MR1289491}. We try to show the
identity by comparing the Fourier coefficients of the Siegel
Eisenstein series and those of the regularised theta integral. We follow closely what was done in
\cite{MR1411571,MR1863861}. In fact most of the arguments carry through.  Let $A(g,\Phi)$
denote the difference between $E(g,s,\Phi)|_{s=s_0}$ and a certain multiple of
$I_{\REG}(g,\Phi)$. Via the theory of Fourier-Jacobi
coefficients we are able to show the vanishing of nonsingular Fourier
coefficients of $A$ can be deduced from lower rank cases. However for the case $(m,r)= (3,1)$, not all Fourier-Jacobi coefficients can lead to lower rank cases. Thus we need further analysis to show that the vanishing of  nonsingular Fourier coefficients of $A$ can be deduced from consideration of those Fourier-Jacobi coefficients leading to lower rank cases. We also fill in some details omitted in \cite{MR1863861}. 
Finally this case of the Siegel-Weil formula coupled with doubling
method enables us to show the critical case of the Rallis inner product formula.

The structure of the article is organised as follows. After setting up necessary notation in Sec.~\ref{sec:notations}, we state the theorem of regularised Siegel-Weil formula in the so-called boundary case in Sec.~\ref{sec:stmt_thm} where we also summarise previous results on the regularisation of theta integrals and some properties of Siegel Eisenstein series. Then in Sec.~\ref{sec:fj-coeff} we compute  the Fourier-Jacobi coefficients for both regularised theta integral and Siegel Eisenstein series. The proof of the regularised Siegel-Weil formula is done in Sec.~\ref{sec:pf_main_thm} by using an induction process enabled by the Fourier-Jacobi coefficients. Finally in Sec.~\ref{sec:inner_product} we deduce our main result of the critical case of the Rallis inner product formula from the regularised Siegel-Weil formula. As a result, we get a relation between non-vanishing of theta lifts and non-vanishing of certain L-function at the critical point $s=1/2$. Our future research project on the arithmetic Rallis inner product formula will depend on this case of the Rallis inner product formula.

\Acknowledgements{I would like to thank my adviser, Professor Shou-Wu Zhang, for lots of
helpful conversations and encouragement during the preparation of the
bulk of this article. Thanks also go to Professor Dihua Jiang for his patience with my nagging questions on the topic and for 
pointing out a flaw in the original argument.
}

\section{Notation and Preliminaries}
\label{sec:notations}
Let $k$ be a number field and $\A$ its adele ring.  Let $U$ be a
vector space of dimension $m$ over $k$ with quadratic form $Q$. 
 The associated bilinear
form $\form{\cdot}{\cdot}_Q$ on $U$ is defined by
$\langle x, y\rangle_Q= Q(x+y)-Q(x)-Q(y)$. Thus $\form{x}{x}_Q =
2Q(x)$. The choice is made so that our notation conforms with \cite{kudla:_notes_local_theta_corres}. Let $\Delta_Q=\det\form{\cdot}{\cdot}_Q$. It is equal to $2^m\det Q$ in our
setup. Let $r$ denote the Witt index of $U$, i.e., the dimension of a
maximal isotropic subspace of $U$. Let $H= \rO(U)$ denote the
orthogonal group of $(U,Q)$. The action of $H$ on $U$ is from the left and we
view the vectors in $U$ as column vectors. Let $G= \Sp_{2n}$ be the symplectic group
of rank $n$ that fixes the symplectic form
$\smatrix{0_n}{1_n}{-1_n}{0_n}$. 
The action of $G$ on the symplectic space is from the right. For each real or finite place $v$ of $k$ there is a
unique non-trivial double cover $\til {G} (k_v)$ of $G (k_v)$ and when
$v \nmid \infty$ and $v \nmid 2$, $\til {G} (k_v)$ splits uniquely over the standard
maximal compact subgroup $K_v$ of $G (k_v)$. For each complex place $v$ of $k$, the metaplectic double cover
 of $G (k_v)$  splits. Still let $K_v$ denote
the image of the splitting.    We define $\til{G}(\A)$ to be the quotient
of the restricted product of $\til {G} (k_v)$ with respect to these
$K_v$'s by the central subgroup
\begin{equation*}
  \{z\in \oplus_v \mu_2 |
\text{evenly many components are $-1$}\}.
\end{equation*}
 This is a non-trivial
double cover of $G(\A)$.  We describe more precisely the multiplication law of $\til{G}(k_v)$. As a set $\til{G}(k_v)$ is isomorphic to $G (k_v) \times \mu_2$. Then the multiplication law on $G (k_v) \times \mu_2$ is given
by
\begin{equation*}
  (g_1,\zeta_1)(g_2,\zeta_2)=(g_1g_2,c_v(g_1,g_2)\zeta_1\zeta_2)
\end{equation*}
where $c_v(g_1,g_2)$ is Rao's $2$-cocycle on
$G(k_v)$ with values in $\mu_2$. The definition of $c_v$ can be
found in \cite[Theorem 5.3]{MR1197062} where it is denoted by $c^{\sim}$. There
the factor $(-1,-1)^{\frac{l(l+1)}{2}}$ should be
$(-1,-1)^{\frac{l(l-1)}{2}}$ as pointed out, for example, in Kudla's
notes \cite[Remark. 4.6]{kudla:_notes_local_theta_corres}.

Fix a nontrivial additive character $\psi$
of $\A/k$ and set $\psi_S(\cdot)= \psi(S\cdot)$ for $S\in
k^\times$.
The Weil representation $\omega$ is a representation of $\til{G}(\A)\times
H(\A)$ and can be realised on the space of Schwartz functions $\cS(U^n(\A))$. 
To give explicit formulae, we set for $A\in \GL_n$
\begin{equation*}
  m(A) = \smatrix{A}{0}{0}{\trpz{A}^{-1}} \in \Sp_{2n}
\end{equation*}
and for $B \in \Sym_n$
\begin{equation*}
  n(B)= \smatrix{1}{B}{0}{1} \in \Sp_{2n}
\end{equation*}
and 
\begin{equation*}
  w_n = \smatrix {} {1_n} {-1_n} {}.
\end{equation*}
Locally the Weil representation $\omega$ is
characterised by the following properties (see e.g.,
\cite[(1.1)-(1.3)]{MR1411571} ). For $\Phi\in \cS(U^n(k_v))$, $X\in U^n(k_v)$, $A\in\GL_n(k_v)$,
$B\in\Sym_n(k_v)$, $\zeta\in\mu_2$ and $h\in H(k_v)$:
\begin{align}
  \omega_v((m(A),\zeta))\Phi(X) &= \chi_v ( A)\chi_{\psi_v}( ( A, \zeta))|\det
  A|_v^{m/2}\Phi(XA),\\
  \label{eq:weil_rep_nilp}
  \omega_v((n(B),\zeta))\Phi(X) &=
  \zeta^m\psi_v(\frac{1}{2}\tr( \form{X}{X}_{Q_v}B))\Phi(X),\\
  \omega_v((
  w_n^{-1},\zeta))\Phi(X) &=
  \zeta^m\gamma_v(\psi_v\circ Q_v)^{-n}\c{F}\Phi (-X),\\
  \omega_v(h)\Phi(X)&=\Phi(h^{-1}X)
\end{align}
where the notation is explained below.

The quantity
 $\gamma_v (\psi\circ Q_v)$ denotes the Weil index of the character of second degree
$x\mapsto \psi_v\circ Q_v(x)$ and has values in $8$-th roots of unity and as a shorthand $\gamma_v (\psi)$ denotes 
the Weil index of the character of second degree
$x\mapsto \psi_v (x^2)$.
For $a\in
k_v^\times$, define also 
\begin{equation*}
 \gamma_v(a,\psi_{v,\half}) =
\frac{\gamma_v(\psi_{v,\frac{a}{2}})}{\gamma_v(\psi_{v,\half})}.  
\end{equation*}
The restriction of the double cover to the Siegel-Levi subgroup is isomorphic to  $\til{\GL}_n(k_v) \isom \GL_n(k_v) \times \mu_2$ and the group law is given by
\begin{equation*}
  (g_1,\zeta_1)\cdot (g_2,\zeta_2) = (g_1 g_2, \zeta_1\zeta_2 (\det g_1, \det g_2)_{k_v})
\end{equation*}
where $(\ ,\ )_{k_v}$ denotes the Hilbert symbol of $k_v$. Then
$\chi_{\psi_v}$ is a character of $\til{\GL} _n(k_v)$ given by 
\begin{equation}\label{eq:chi-psi}
  \chi_{\psi_v} ((A,\zeta)) =
  \begin{cases}
    \zeta \gamma_v (\det A, \psi_{v,\half})^{-1} \quad &\text {if $m$ is odd,}\\
    1 & \text{if $m$ is even.}
  \end{cases}
\end{equation}
The symbol $\chi_v$ denotes the quadratic character of $\GL_n (k_v)$ defined as follows:
\begin{equation}\label{eq:chi-U-v}
  \chi_v(A)= (\det A, (-1)^{\frac{m(m-1)}{2}} \Delta_{Q_v})_{k_v}.
\end{equation}
The matrix $\langle X,X \rangle_{Q_v}$ has $\langle X_i,X_j
\rangle_{Q_v}$ as $( i, j)$-th entry if we write $X=(X_1,\ldots,X_n)$ with
$X_i$ column vectors in $U(k_v)$. The Fourier transform $\c {F}\Phi$ of $\Phi$ with
respect to $\psi_v$ and $Q_v$ is defined to be
\begin{equation*}
  \c{F}\Phi(X)=\int_{U^n(k_v)}\psi_v(\tr\langle X,Y\rangle_{Q_v})\Phi(Y)dY.
\end{equation*}

From the Weil representation we get the theta function
\begin{equation*}
  \Theta(g,h;\Phi) = \sum_{u\in U^n(k)} \omega(g,h)\Phi(u)
\end{equation*}
where $g\in \til{G}(\A)$, $h\in H(\A)$ and $\Phi\in
\cS(U^n(\A))$. The series is absolutely convergent. Consider the theta integral
\begin{equation*}
  I(g,\Phi) = \int_{H(k)\lmod H(\A)} \Theta(g,h;\Phi) dh
\end{equation*}
which may not  converge. It is well-known that it is
absolutely convergent for all $\Phi\in \cS(U^n(\A))$ if either $r=0$
or $m-r>n+1$. Thus in the case considered in this paper we will need
to regularise the theta integral unless $Q$ is anisotropic. The regularised theta integral is a natural extension of $I$. It is defined in Sec.~\ref{sec:reg_theta} and is denoted by $I_\REG(g,\Phi)$.

Let $P$ be the Siegel parabolic subgroup of $G$ and $N$ the unipotent
radical. Since $\til { G} (k_v)$ splits uniquely over $N (k_v)$, we will
still use $N (k_v)$ to denote the image of $N (k_v)$ under the
splitting. Also $N (\A)$ denotes the image of $N (\A)$ under the
splitting of $\til {G} (\A)$ over $N (\A)$. Since $\til {G} (\A)$
splits uniquely over $G (k)$, we also identify $G (k)$ with its image
under the splitting. For any subgroup $J$ of $G (\A)$ we let $\til
{J}$ denote the preimage of $J$ under the projection $\til{G}(\A)
\rightarrow G (\A)$.

For $g\in\til{G}(\A)$ decompose $g$ as $g=m(A)nk$ with
$A\in\GL_n(\A)$, $n\in N(\A)$ and $k\in\til{K}$. Set $a(g) =\det A$
in any such decomposition of $g$. Even though there are many choices for $a (g)$,  the quantity $|a(g)|_\A$ is
well-defined.  Set $s_0=(m-n-1)/2$. The Siegel-Weil section associated to
$\Phi\in\cS(U^n(\A))$ is then defined to be
\begin{equation*}
  f_\Phi (g,s) = | a(g)|_\A^{s-s_0}\omega(g)\Phi(0).
\end{equation*}
This is a section in the normalised induced representation
$\Ind_{\til{P}(\A)}^{\til{G}(\A)} \chi\chi_\psi|\; |_\A^{s}$. We
introduce also the notion of weak Siegel-Weil section. It is a
section of the induced representation such that it agrees with a
Siegel-Weil section at $s=s_0$.

Now we define the Eisenstein series
\begin{equation*}
  E(g,s,f_\Phi)=\sum_{\gamma \in P(k)\lmod G(k)}
  f_\Phi(\gamma g,s).
\end{equation*}
It is absolutely convergent for $\Re(s)>(n+1)/2$ and has meromorphic
continuation to the whole $s$-plane when $\Phi$ is
$\til{K}$-finite. In the case where $m=n+1$, $s_0$ is  $0$ and
$E(g, s,f_\Phi)$ is known to be holomorphic at $s=s_0$.

\section{Statement of the Siegel-Weil Formula}
\label{sec:stmt_thm}
 Let $\cS_0(U^n(\A))$ denote the
$\til{K}$-finite part of $\cS(U^n(\A))$.  As our main concern is for
$U$ with odd dimension, we exclude the split binary case which
requires a separate treatment in the statement. We normalise the Haar measure on
$H(k)\lmod H(\A)$ so that it has volume $1$. We will show the
following
\begin{theorem}\label{thm:Siegel-Weil}\quad
  Assume that $m=n+1$ and that $U$ is not  split binary. Then
  \begin{equation*}
    E(g,s,f_\Phi)|_{s=0}= 2I_{\REG}(g,\Phi)
  \end{equation*}
  for all $\Phi\in\cS_0(U^n(\A))$.
\end{theorem}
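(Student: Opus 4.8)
The plan is to study the difference
\[
  A(g,\Phi) := E(g,s,f_\Phi)|_{s=0} - 2\,I_{\REG}(g,\Phi),
\]
which, since $E(g,s,f_\Phi)$ is holomorphic at $s=0$ in the boundary case $m=n+1$ and $I_{\REG}$ is a well-defined automorphic form (Sec.~\ref{sec:reg_theta}), is a genuine automorphic form of moderate growth on $\til{G}(\A)$; the goal is to show $A\equiv 0$. I would first establish that $A$ is in fact rapidly decreasing (the potentially growing constant terms of the two sides cancel, following the growth estimates of Kudla--Rallis and Ichino), so that the vanishing of a suitable family of Fourier-type coefficients will force $A$ itself to vanish. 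Concretely, I analyze the Fourier expansion of $A$ along the unipotent radical $N$ of the Siegel parabolic $P$: writing
\[
  A_T(g,\Phi) = \int_{N(k)\lmod N(\A)} A(n(B)g,\Phi)\,\psi(-\tr(TB))\,dB, \qquad T\in\Sym_n(k),
\]
it suffices to show $A_T\equiv 0$ for every $T$. The nonsingular $T$ ($\det T\neq 0$) carry the essential local information, while the singular $T$, including the constant term $T=0$, will be controlled by the same Fourier--Jacobi descent together with the rapid decrease of $A$.

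For nonsingular $T$ the coefficient $A_T$ factors into local quantities. Using the explicit action \eqref{eq:weil_rep_nilp} of $n(B)$ in the Weil representation, the $T$-th coefficient of $I_{\REG}(g,\Phi)$ is governed by the local orbits of vectors $X$ with $\half\form{X}{X}_{Q}=T$, whereas the $T$-th coefficient of $E(g,s,f_\Phi)$ is a product of local degenerate Whittaker integrals attached to the $f_{\Phi_v}$. In the symplectic and unitary settings one matches these two directly by analytically continuing the local Whittaker functionals on the submodules $R_n(U_v)$; but this continuation is unavailable for the metaplectic group at archimedean places, so I do not prove the nonsingular case by a bare local matching. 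Instead, following \cite{MR1411571,MR1863861} and replacing the archimedean differential operator by Ichino's Hecke operator at a finite place, the vanishing of the nonsingular $A_T$ will be extracted from the Fourier--Jacobi coefficients of $A$.

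The heart of the argument is therefore the Fourier--Jacobi expansion, carried out in Sec.~\ref{sec:fj-coeff}. I would compute the Fourier--Jacobi coefficient of each of $E(g,s,f_\Phi)|_{s=0}$ and $I_{\REG}(g,\Phi)$ against a character of the relevant Heisenberg subgroup of $\til{G}(\A)$, and show that, up to an explicit theta kernel coming from the Weil representation, each is again a Siegel Eisenstein series, respectively a regularized theta integral, for the rank $n-1$ metaplectic group attached to the same space $U$. Since for rank $n-1$ the data $m=n+1$ is no longer at the boundary (it satisfies $(n-1)+1<m$), the corresponding Siegel--Weil identity is already known --- by Ichino \cite{MR1863861} when $r\ge 1$, and by absolute convergence when $r=0$ --- so the lower-rank difference $A^{(n-1)}$ vanishes, and hence so do the Fourier--Jacobi coefficients of $A$. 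Combined with the rapid decrease of $A$, this yields $A\equiv 0$; the factor $2$ in the statement is accounted for by the normalization of $I_{\REG}$ relative to the leading term of the Kudla--Rallis regularized theta integral, together with the coincidence of the two constant-term contributions forced by the symmetry $s\mapsto -s$ at the center $s_0=0$.

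I expect the main obstacle to be the base case $(m,r)=(3,1)$, i.e.\ $n=2$ with $U$ of Witt index $1$. Here the quadratic space has too few isotropic directions, so not every Fourier--Jacobi coefficient of $A$ descends to a genuine lower-rank Siegel--Weil identity, and the clean reduction above fails for some characters. The delicate point, refining the corresponding step of \cite{MR1863861}, will be to show that the vanishing of all nonsingular Fourier coefficients of $A$ can nonetheless be deduced from only those Fourier--Jacobi coefficients that do descend to rank $1$; this requires a more careful analysis of how the nonsingular $A_T$ are linked across the admissible characters, and is where the bulk of the new work in Sec.~\ref{sec:pf_main_thm} lies.
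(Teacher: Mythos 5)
Your skeleton (form the difference $A$, attack it through Fourier--Jacobi coefficients, isolate $(m,r)=(3,1)$ as the hard case) matches the paper, but two of your load-bearing steps are wrong as stated. First, the descent is misidentified. Taking the $\psi_S$-Fourier--Jacobi coefficient does not yield an Eisenstein series and a regularised theta integral for rank $n-1$ ``attached to the same space $U$'': the central character $\psi_S$ forces the theta sum onto vectors $t$ with $\form{t}{t}_Q=S$, and the descended objects (Prop.~\ref{prop:fj_theta} and Prop.~\ref{prop:fj_eisenstein}) live on $\til{G}_1=\til{\Sp}_{2(n-1)}(\A)$ but are attached to $U_1=t^{\perp}$ of dimension $m-1=n$, integrated over $H_1(\A)\lmod H(\A)$. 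Hence the descended pair satisfies $\dim U_1=(n-1)+1$ and is \emph{again} a boundary case; it is not in the convergent or first-term range, so neither Ichino's theorem nor absolute convergence applies to it, and the identity you propose to quote is exactly the one being proved. What saves the argument is that the parity of $\dim U$ alternates under descent: for $m\ge 5$ odd and isotropic one lands after a single step in the even boundary case settled by Kudla--Rallis \cite{MR1289491}; for anisotropic $U$ one inducts all the way down to the base case $(m,r)=(2,0)$ of \cite{MR887329}; and $(m,r)=(3,1)$ descends to $(2,0)$ only for those $S$ outside the square class of $-\Delta_Q$, after which the undetermined proportionality constant must still be pinned down by a separate comparison of constant terms (using $M(0)=1$ on coherent sections).

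Second, your passage from ``all nonsingular $A_T$ vanish'' to ``$A\equiv 0$'' does not work: rapid decrease plus vanishing of the nonsingular Fourier coefficients along $N$ does not force an automorphic form to vanish (a cusp form can perfectly well have only singular nonzero coefficients along the Siegel parabolic), and the paper neither proves nor needs rapid decrease of $A$. The missing ingredient is the local module structure. Fixing all data outside one finite place $v$, the map $\Phi_v\mapsto A(\cdot,\Phi_v\otimes\Phi^0)$ is $\til{G}(k_v)$-intertwining and factors through the irreducible module $R_n(U(k_v))$, which is nonsingular in Howe's sense (Lemma~\ref{lemma:nonsingular}). Convolving $A$ with a Schwartz function $f$ on $\Sym_n(k_v)$ whose Fourier transform is supported on nonsingular matrices multiplies each $A_B$ by $\hat f(B)$, hence kills every singular coefficient outright and every nonsingular one by Prop.~\ref{prop:nonsing_fourier_coeff}; therefore $\rho(f)A=0$, and since $f$ does not act by zero on the irreducible $R_n(U(k_v))$, one concludes $A=0$. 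Without this step (or an equivalent mechanism for controlling the singular coefficients) your proof does not close.
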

\begin{remark}\quad
  The regularised theta integral $I_{\REG}$ will be defined in Sec.~
  \ref{sec:reg_theta}. For the split binary case, the Siegel-Weil
  formula also relates the leading term of the Eisenstein series and the regularised theta integral, but as the Eisenstein series vanishes at $s=0$, the equation takes a different form. As m is even, it is
  covered by Kudla-Rallis's
  result\cite{MR1289491}. A more precise statement for the split binary case is  in \cite [Proposition 5.8.(ii)] {MR2822859}:
  \begin{equation*}
    \frac{\partial}{\partial s}E(g,s,f_\Phi)|_{s=0}= 2I_{\REG}(g,\Phi).
  \end{equation*}
\end{remark}

\subsection{Regularisation of Theta Integral}
\label{sec:reg_theta}

When $Q$ is isotropic and $m-r \le n+1$, the theta integral is not
necessarily absolutely convergent for all $\Phi\in\cS(U^n(\A))$. We summarise how one
regularises theta integrals according to Ichino\cite[Section
1]{MR1863861}.  We make  use of the notation there and
explain what they are without going into too much detail.

Take $v$ a finite place of $k$. If $v \nmid 2$ then
there is a canonical splitting of $\til{G} (k_v)$ over $K_{G,v}$, the standard
maximal compact subgroup of $G (k_v)$. Identify $K_{G,v}$ with the image of the
splitting. Let $\cH_{G,v}$ and $\cH_{H,v}$ denote the spherical Hecke algebras
of $\til{G} (k_v)$ and $H (k_v)$:
\begin{align*}
  \cH_{G,v} &= \{ \alpha\in \cH(\til{G} (k_v) //K_{ G,v}) | \alpha(\epsilon
  g)=\epsilon^m\alpha(g) \text{ for all } g\in \til{G} (k_v) \},\\
  \cH_{ H,v} &= \cH(H (k_v) //K_{H,v})
\end{align*}
where $\epsilon = (\mathbf{1}_{2n},-1) \in \til{G} (k_v)$.

The following is due to Ichino\cite[Section
1]{MR1863861}.
\begin{proposition}\quad
  Assume $m\le n+1$ and $r\neq 0$. Fix $\Phi\in \cS(U^n(\A))$ and
  choose a `good' place $v$ of $k$ for $\Phi$. Then there exists a Hecke
  operator $\alpha\in\cH_{G,v}$ satisfying the following conditions:
  \begin{enumerate}
  \item $I(g,\omega(\alpha)\Phi)$ is absolutely convergent for all
    $g\in\til{G}(\A)$;
  \item $\theta(\alpha).\mathbf{1} = c_\alpha . \mathbf{1}$ with
    $c_\alpha\neq 0$.
  \end{enumerate}
\end{proposition}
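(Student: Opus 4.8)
The plan is to reconstruct Ichino's construction, treating the two requirements separately and then exhibiting a single operator that meets both. Throughout, $v$ is a good finite place for $\Phi$, so that $v\nmid 2$, the cover $\til{G}(k_v)$ splits over $K_{G,v}$, the data $(U,Q,\psi)$ are unramified at $v$, and $\Phi_v$ is the standard spherical Schwartz function; in particular the Satake isomorphism is available for both $\cH_{G,v}$ and $\cH_{H,v}$, and I may write $\Phi=\Phi_v\otimes\Phi^v$.

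First I would reduce condition (1) to a purely local statement. Unfolding $I(g,\Phi)$ along the $H(k)$-orbits on $U^n(k)$ and recalling Weil's convergence criterion, the contribution of an orbit converges absolutely whenever the span of the columns of a representative $X$ is a nondegenerate $n$-dimensional subspace, equivalently $\det\form{X}{X}_{Q}\neq 0$; since $r\neq 0$ makes $H(k)\lmod H(\A)$ noncompact, the only source of divergence is the orbits with $\det\form{X}{X}_{Q}=0$, which correspond to the growth of $\Theta(g,\,\cdot\,;\Phi)$ along its cusps. Because $\det\form{X}{X}_{Q}\in k$ is unchanged under $k\hookrightarrow k_v$, it suffices to find $\alpha\in\cH_{G,v}$ for which $\omega_v(\alpha)\Phi_v$ vanishes on the singular locus $\{X\in U^n(k_v):\det\form{X}{X}_{Q_v}=0\}$: then every degenerate global orbit drops out of $\omega(\alpha)\Phi=\omega_v(\alpha)\Phi_v\otimes\Phi^v$, leaving only absolutely convergent orbits, so that $I(g,\omega(\alpha)\Phi)$ converges.

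Next I would construct such an $\alpha$ and simultaneously control condition (2), using the interplay of the two Hecke algebras. Via the Weil representation the action of $\cH_{G,v}$ on $\cS(U^n(k_v))$ commutes with that of $H(k_v)$, and the unramified theta correspondence for the dual pair supplies an algebra homomorphism $\theta\colon\cH_{G,v}\to\cH_{H,v}$ with an intertwining relation of the form $\Theta(g,h;\omega_v(\alpha)\Phi)=(R(\theta(\alpha))\Theta)(g,h)$, where $R$ denotes right translation in $h$. Formally, and rigorously whenever $I(g,\Phi)$ converges, this yields $I(g,\omega(\alpha)\Phi)=c_\alpha\, I(g,\Phi)$ with $c_\alpha$ the eigenvalue by which $\theta(\alpha)$ acts on the trivial representation of $H(k_v)$, i.e.\ $\theta(\alpha).\mathbf{1}=c_\alpha.\mathbf{1}$; this is the identity underlying the later definition $I_{\REG}(g,\Phi)=c_\alpha^{-1}I(g,\omega(\alpha)\Phi)$ and its independence of $\alpha$. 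The singular locus is stratified by the rank of the Gram matrix $\form{X}{X}_{Q_v}$, and under the moment map these strata match the parabolic boundary strata of $H(k_v)$; via the Satake isomorphism each contributes finitely many unramified parameters. I would then choose $\alpha$ so that the Satake transform of $\theta(\alpha)$ vanishes at all of these boundary parameters—killing the divergent orbit contributions and giving (1)—while remaining nonzero at the parameter attached to the trivial representation, giving $c_\alpha\neq 0$ and hence (2).

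The main obstacle is exactly the simultaneity of the two conditions: the operator that regularises the integral must not annihilate the trivial representation on the orthogonal side. This reduces to showing that the parameter of the trivial representation of $H(k_v)$ is disjoint from the finitely many boundary parameters that must be killed, so that a single Satake polynomial separates them; verifying this disjointness is delicate precisely in the present boundary range $m\le n+1$ with $r\neq 0$, and is where the bulk of the work lies. A second technical point, special to the metaplectic setting, is that $\alpha$ must be genuine, i.e.\ lie in $\cH_{G,v}=\{\alpha\in\cH(\til{G}(k_v)/\!/K_{G,v}):\alpha(\epsilon g)=\epsilon^m\alpha(g)\}$ with $\epsilon=(\mathbf{1}_{2n},-1)$; I would carry out the entire construction $\mu_2$-equivariantly, using the canonical splitting of $\til{G}(k_v)$ over $K_{G,v}$ at the good place $v\nmid 2$, so that the resulting $\alpha$ indeed lands in $\cH_{G,v}$.
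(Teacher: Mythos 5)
The paper does not actually prove this proposition: it is imported verbatim from Ichino [MR1863861, Section~1], so there is no internal argument to measure your proposal against, and I compare it with the cited construction instead. Your overall strategy --- reduce (1) to a local support condition on $\omega_v(\alpha)\Phi_v$, then use the homomorphism $\theta$ and the Satake isomorphism to choose $\alpha$ killing the bad locus while keeping $c_\alpha\neq 0$ --- has the right shape, but the support condition you impose is wrong, and wrong in a way that makes your two requirements provably incompatible. You ask that $\omega_v(\alpha)\Phi_v$ vanish on $\{X : \det\form{X}{X}_{Q_v}=0\}$. This set contains $X=0$, and for any $\alpha\in\cH_{G,v}$ one has
\begin{equation*}
\omega_v(\alpha)\Phi_v(0)=\omega_v(\theta(\alpha))\Phi_v(0)=\Phi_v(0)\int_{H(k_v)}\theta(\alpha)(h)\,dh=c_\alpha\,\Phi_v(0),
\end{equation*}
with $\Phi_v(0)=1$ for the spherical function; so vanishing at $X=0$ forces $c_\alpha=0$, and no $\alpha$ can satisfy your version of (1) together with (2). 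The underlying error is the dichotomy ``an orbit converges iff $\det\form{X}{X}_{Q}\neq0$'': the orbits responsible for divergence are those whose span $U(X)$ is a \emph{degenerate} subspace (equivalently $\rk\form{X}{X}_{Q}<\dim U(X)$), not those with singular Gram matrix. The orbit of $X=0$ contributes $\Phi(0)\cdot\vol([H])<\infty$, and any $X$ spanning a nondegenerate subspace of dimension $<n$ likewise gives a convergent term, while it is precisely the $X$ with degenerate (in particular isotropic) span that prevent $\Theta(g,\cdot;\Phi)$ from decaying in the cusps of $[H]$. Your locus is also fatal for a second reason in the range $m\le n$ permitted by the hypothesis: there $\det\form{X}{X}_{Q}$ vanishes identically on $U^n$ when $m<n$, so your condition would force $\omega(\alpha)\Phi\equiv0$ and hence $I_{\REG}\equiv0$. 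The condition actually used by Kudla--Rallis and Ichino is vanishing on the degenerate-span locus, which excludes $0$ and is compatible with $c_\alpha\neq0$.

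Even after correcting the locus, your argument stops exactly where the content of the proposition begins. The deduction of absolute convergence from the support condition (rapid decay of the theta series once the degenerate-span terms are removed) is asserted rather than proved, and, more importantly, the existence of a single $\alpha$ whose transform vanishes at all the boundary Satake parameters while remaining nonzero at the parameter of the trivial representation of $H(k_v)$ --- which you yourself flag as ``where the bulk of the work lies'' --- is precisely the theorem, and it is not carried out. As it stands the proposal is a strategy outline resting on a flawed reduction, not a proof.
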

\begin{remark}\quad
  For the notion of good place please refer to \cite[Page
  209]{MR1863861}. In the above proposition, $\theta$ is the algebra
  homomorphism from $\cH_{G,v}$ to $\cH_{H,v}$
  such that $\omega(\alpha)=\omega(\theta(\alpha))$ as in
  \cite[Prop 1.1]{MR1863861}.  The trivial representation
  of $H (k_v)$ is denoted by $\mathbf{1}$ here.
\end{remark}
\begin{definition}\quad
  Define the regularised theta integral by
  \begin{equation*}
    I_{\REG}(g,\Phi)=c_\alpha^{-1}I(g,\omega(\alpha)\Phi).
  \end{equation*}
\end{definition}
\begin{remark}\quad
  The above definition is in fact independent of the choice of $v$ and
  $\alpha$.  To unify notation also write $I_{\REG}(g,\Phi)$ for
  $I(g,\Phi)$ when $Q$ anisotropic.
\end{remark}

It should be pointed out that $I_\REG$ is a natural extension of $I$
in the following sense. Let $\cS(U^n(\A))_\abc$ denote the (nonzero)
subspace of $\cS(U^n(\A))$ consisting of all $\Phi$'s such that
$I(g,\Phi)$ is absolutely convergent for all $g$. Then $I$ defines an
$H(\A)$-invariant map
\begin{equation*}
  I: \cS(U^n(\A))_\abc \rightarrow \c{A}^\infty(G)
\end{equation*}
where $\c{A}^\infty(G)$ is the space of smooth automorphic forms on
$\til{G}(\A)$ (left-invariant by $G(k)$) without the
$\til{K}_G$-finiteness condition.

\begin{proposition}\quad \emph{\cite[Lemma 1.9]{MR1863861}}
  Assume $m\le n+1$. Then $I_{\REG}$ is the unique $H(\A)$-invariant
  extension of $I$ to $\cS(U^n(\A))$.
\end{proposition}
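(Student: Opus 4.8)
The plan is to establish two assertions: first, that $I_{\REG}$ really is an $H(\A)$-invariant extension of $I$, and second, that any $H(\A)$-invariant extension of $I$ to all of $\cS(U^n(\A))$ must coincide with it. Both rest on the single mechanism furnished by the preceding Proposition: for any $\Phi$ one finds $\alpha\in\cH_{G,v}$ at a good place $v$ so that $I(g,\omega(\alpha)\Phi)$ converges absolutely, while the transferred operator $\theta(\alpha)\in\cH_{H,v}$ acts on the trivial representation of $H(k_v)$ by the nonzero scalar $c_\alpha$. The guiding principle is that integrating the theta kernel over $H(k)\lmod H(\A)$ amounts to pairing against the trivial representation, so a spherical Hecke operator inserted in the orthogonal variable merely extracts its eigenvalue $c_\alpha=\int_{H(k_v)}\theta(\alpha)$.

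For the extension property I would first invoke the transfer identity $\omega(\alpha)=\omega(\theta(\alpha))$ recorded above, which rewrites $\omega(\alpha)\Phi$ as a convolution in the $H$-variable. Setting $F_\Phi(h)=\Theta(g,h;\Phi)$, this reads $\Theta(g,\cdot;\omega(\alpha)\Phi)=R(\theta(\alpha))F_\Phi$, where $R(\theta(\alpha))$ is right convolution by $\theta(\alpha)$ on functions on $H(\A)$. When $\Phi\in\cS(U^n(\A))_\abc$ both $F_\Phi$ and $R(\theta(\alpha))F_\Phi$ are absolutely integrable over $H(k)\lmod H(\A)$; unfolding the convolution, applying Fubini, translating by the compactly supported $\theta(\alpha)$, and using left $H(k)$-invariance of $F_\Phi$ with unimodularity gives $\int R(\theta(\alpha))F_\Phi=c_\alpha\int F_\Phi$, i.e.\ $I(g,\omega(\alpha)\Phi)=c_\alpha I(g,\Phi)$, so $I_{\REG}=I$ on $\cS(U^n(\A))_\abc$. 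For invariance, fix $h\in H(\A)$ and choose the good place $v$ so that in addition $h_v\in K_{H,v}$; this is possible because almost all places are good and $h_w\in K_{H,w}$ for almost all $w$, and by the independence of $I_{\REG}$ from the choice of $(v,\alpha)$ noted in the Remark above we lose nothing. Since $\theta(\alpha)$ is bi-$K_{H,v}$-invariant, $R(\theta(\alpha))$ commutes with right translation $R(h)$; combining this with $\Theta(g,\cdot;\omega(h)\Phi)=R(h)F_\Phi$ and the change of variable $h'\mapsto h'h^{-1}$ in the absolutely convergent integral yields $I_{\REG}(g,\omega(h)\Phi)=I_{\REG}(g,\Phi)$.

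Uniqueness is then formal, and is the cleanest part. Suppose $J_1,J_2$ are $H(\A)$-invariant extensions of $I$ and put $D=J_1-J_2$, an $H(\A)$-invariant linear map $\cS(U^n(\A))\to\c{A}^\infty(G)$ vanishing on $\cS(U^n(\A))_\abc$. Given any $\Phi$, pick $\alpha$ as above; because $\theta(\alpha)$ is compactly supported and $D$ is linear and $H(\A)$-invariant, I can pull $D$ through the defining integral of $\omega(\theta(\alpha))\Phi$ to obtain $D(\omega(\theta(\alpha))\Phi)=\big(\int_{H(k_v)}\theta(\alpha)\big)D(\Phi)=c_\alpha D(\Phi)$, the last equality since the eigenvalue of a spherical operator on the trivial representation is its total mass. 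But $\omega(\theta(\alpha))\Phi=\omega(\alpha)\Phi\in\cS(U^n(\A))_\abc$, so $D(\omega(\theta(\alpha))\Phi)=0$; as $c_\alpha\neq0$ we get $D(\Phi)=0$ for every $\Phi$, hence $J_1=J_2$.

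The main obstacle, and the step needing the most care, is the $H(\A)$-invariance of $I_{\REG}$ itself: one must arrange the good place $v$ to avoid the finite set where $h$ leaves the maximal compact, invoke independence from the choice of $(v,\alpha)$, and justify interchanging the $\theta(\alpha)$-convolution with the translation $R(h)$ and with the integration over $H(k)\lmod H(\A)$ — all of which hinge on the absolute convergence guaranteed by the Proposition. By contrast the uniqueness half uses only the abstract eigenvalue identity and is insensitive to these analytic subtleties.
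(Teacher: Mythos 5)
Your argument is correct and is essentially the proof of Ichino's Lemma 1.9, which the paper cites without reproducing: everything reduces to the fact that $\omega(\alpha)\Phi=\omega(\theta(\alpha))\Phi$ lies in $\cS(U^n(\A))_\abc$, while any linear $H(\A)$-invariant extension must send $\omega(\theta(\alpha))\Phi$ to $c_\alpha$ times its value on $\Phi$ (the integral defining $\omega(\theta(\alpha))\Phi$ being a finite sum of $H(k_v)$-translates because $v$ is finite and $\Phi_v$ is locally constant, which is what licenses pulling the functional through the integral). A minor simplification: the $H(\A)$-invariance of $I_{\REG}$ follows at once from the commutation of $\omega(\alpha)$, which acts through the $\til{G}(k_v)$-variable, with $\omega(h)$ for $h\in H(\A)$, so there is no need to relocate the good place $v$ or to commute $R(\theta(\alpha))$ past $R(h)$.
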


\subsection{Siegel Eisenstein Series}
\label{sec:siegel_eisenstein}
Now we define the Siegel Eisenstein series.  Temporarily let
$\til{\chi}$ be an arbitrary character of $\til{P}(\A)$. Let
$I(\til{\chi}, s)$ denote the induced representation
$\Ind^{\til{G}(\A)}_{\til{P}(\A)} \til{\chi} |\ |^s$. Then the holomorphic
sections of $I(\til{\chi}, s)$ are functions $f$ such that
\begin{enumerate}
\item $f(g,s)$ is holomorphic with respect to $s$ for each
  $g\in\til{G}(\A)$,
\item $f(pg,s)=\til{\chi}(p) |a(p)|_\A^{s+(n+1)/2} f(g,s)$ for
  $p\in\til{P}(\A)$ and $g\in\til{G}(\A)$ and
\item $f(\cdot,s)$ is $\til{K}_G$-finite.
\end{enumerate}
For $f$ a holomorphic section of $I(\til{\chi}, s)$ we form the Siegel
Eisenstein series
\begin{equation*}
  E(g,s,f)=\sum_{\gamma\in P(k)\lmod G(k)} f(\gamma g,s).
\end{equation*}
Note that $\til{G}(\A)$ splits over $G(k)$ and thus $G (k)$ is
identified with a subgroup of $\til {G} (\A)$.

We will specialise to the case where $\til{\chi}$ is the trivial
extension from $\til {M} (\A)$ to $\til {P} (\A)$ of the character $\chi_U\chi_\psi$
whose local versions, $\chi_{U,v}$ and $\chi_{\psi_v}$,  are defined in
\eqref{eq:chi-U-v} and  \eqref{eq:chi-psi} respectively.  For $\Phi\in \cS_0(U^n(\A))$ define the Siegel-Weil section by
\begin{equation*}
  f_\Phi(g,s)=|a(g)|^{s-\frac{m-n-1}{2}}\omega(g)\Phi(0).
\end{equation*}
Then $f_\Phi$ is a holomorphic section of $I(\chi_U\chi_\psi,s)$. The
Eisenstein series $E(g,s,f_\Phi)$ is absolutely convergent for
$\Re(s)>(n+1)/2$ and has meromorphic continuation to the whole
$s$-plane. We know from the summary given in\cite[Page
216]{MR1863861} that if $m=n+1$, $E(g,s,f_\Phi)$ is
holomorphic at $s=0$.

The following definition will be useful later.
\begin{definition}\quad
  \label{defn:weak_SW_section}
  A holomorphic section $f\in I(\chi_U\chi_\psi,s)$ is said to be a weak Siegel-Weil
  section associated to $\Phi\in \cS(U^n(\A))$ if
  $f(g,\frac{m-n-1}{2})=\omega(g)\Phi(0)$.
\end{definition}

Now we discuss some local aspects. Define similarly $I_v(\chi_{U,v} \chi_{\psi_v},
s)$ in the local cases.  Fix one place $v$. For every $w\neq v$, fix
$\Phi_w^0\in\cS(U^n(k_w))$ and let $f_w^0(g_w,s)$ be the associated
Siegel-Weil section in $I_v(\chi_{U,v} \chi_{\psi_v}, s)$ where we suppress the
subscript $\Phi_w^0$ to avoid clutter.  Then if $m=n+1$ we have a map
\begin{equation}
  \begin{split}
    I_v(\chi_{U,v} \chi_{\psi_v},s) &\rightarrow \c{A}(G)\\
    f_v &\mapsto E(g,s,f_v \otimes (\otimes_{w\neq v} f_w^0))|_{s=0}.
  \end{split}
\end{equation}
It follows from verbatim from the proof of \cite[Prop. 2.2]{MR961164} that it is
$\til{G} (k_v)$-intertwining if $v$ is finite or
$(\f{g}_v,\til{K}_{G,v})$-intertwining if $v$ is archimedean.

\section{Fourier-Jacobi Coefficients}
\label{sec:fj-coeff}

A key step in the proof of Siegel-Weil formula is the comparison of the nonsingular
Fourier coefficients of the Eisenstein series and those of the regularised
theta integral. This comparison is done recursively by applying Fourier-Jacobi coefficients.

Let $B$ be a nonsingular matrix in $\Sym_n (k)$. It is easy to see that the $B$-th Fourier coefficient
of the Eisenstein series is a product of entire degenerate Whittaker
functions when $m$ is even. This is also known when $n=1$ and $m$ arbitrary. 
 However in the case $m$ odd and $n\neq 1$, the analytic continuation of the degenerate Whittaker distributions in the archimedean places has not been proved. To work
around the problem Ikeda\cite{MR1411571} used
Fourier-Jacobi coefficients to initiate an induction process. The vanishing of
$B$-th Fourier coefficients can be determined from Fourier coefficients of  lower dimensional objects.

We generalise slightly  the calculation done in
\cite{MR1411571} and fill in some computation omitted in \cite{MR1863861}.  First we introduce some
subgroups of $G$, describe Weil representation of a certain Heisenberg
group $V$ and then define the Fourier-Jacobi coefficients of an
automorphic form. The exposition closely follows that in
\cite{MR1863861}.

Put
\begin{align*}
  V&=\left\{ v(x,y,z)=\left(
      \begin{array}{cc|cc}
        1&x&z&y\\
        &1_{n-1}&\trpz{y}& \\ \hline
        &&1&\\
        &&-\trpz{x}&1_{n-1}
      \end{array}\right)
    \middle| x,y \in k^{n-1},z\in k \right\}, \\
  Z&=\left\{v(0,0,z)\in V\right\},\\
  L&=\left\{v(x,0,0)\in V\right\},\\
L^*&=\left\{v(0,y,0)\in V\right\},\\
  G_1&=\left\{ \left(\begin{array}{cc|cc}
        1 & & &\\
        &a & &b\\ \hline
        &&1&\\
        &c&&d
      \end{array}\right)
    \middle|
    \begin{pmatrix}
      a&b\\c&d
    \end{pmatrix}\in \Sp_{n-1}\right\},\\
  N_1&=\left\{ \left(\begin{array}{cc|cc}
        1 & & &\\
        &1_{n-1}&&n_1\\ \hline
        &&1&\\
        &&&1_{n-1}
      \end{array}\right)
    \middle| n_1\in\Sym_{n-1} \right\}.
\end{align*}
We will just
identify $L$ (resp. $L^*$) with row vectors of length $n-1$. Then $V$ is the Heisenberg group $\cH (W) = W \oplus k$ associated to the symplectic space $W = L\oplus L^*$. Note that $v (x,y,z)$ corresponds to the element $((x,y),z/2)$ in $\cH (W)$ and that $L$ and $L^*$ are transversal maximal isotropic subspaces of $W$. The symplectic form on $W$ will be denoted by $\form {\ } {\ }$ and it is given by
\begin{equation*}
  \form {v (x,0,0)} {v (0,y,0) } = x \trpz {y}.
\end{equation*}
 Temporarily we let $\psi$ be an arbitrary nontrivial character
of $k\lmod \A$. The Schr\"odinger representation $\omega$ of $V(\A)$
with central character $\psi$ can be realised on the Schwartz space
$\cS(L(\A))$ and is given by\cite[Lemma
2.2, Chap. I]{kudla:_notes_local_theta_corres}:
\begin{equation}\label{eq:heisenberg_action}
  \omega(v(x,y,z))\phi(t)=\phi(t+x)\psi(\half z+ \form{t}{y}_W+\form{x}{y}_W)
\end{equation}
for $\phi\in \cS(L(\A))$.  By the Stone-von Neumann
theorem, $\omega$ is irreducible and unique up to isomorphism. As
$G_1(\A)$ acts on $V(\A)$, the  representation $\omega$ of
$V(\A)$ can be extended to a representation $\omega$ of
$V(\A)\rtimes \til{G }_1(\A)$ on $\cS(L(\A))$. See also
\cite[Prop. 2.3]{kudla:_notes_local_theta_corres}.  Let $\til{K}_{G_1}$
denote the standard maximal compact subgroup of $\til{G }_1(\A)$ and
$\cS_0(L(\A))$ the $\til{K}_{G_1}$-finite elements in $\cS(L(\A))$.

For each $\phi\in\cS(L(\A))$ define the theta function
\begin{equation*}
  \vartheta(vg_1,\phi)=\sum_{t\in L(k)} \omega(vg_1)\phi(t)
\end{equation*}
for $v\in V(\A)$ and $g_1\in \til{G}_1(\A)$.  For $A$ an automorphic
form on $\til{G}(\A)$ and $\phi \in \cS(L(\A))$, we define the Fourier-Jacobi coefficient of $A$,
which is a function on $G_1(k)\lmod \til{G}_1(\A)$, by
\begin{equation*}
  \FJ^\phi(g_1;A)=\int_{[V]} A(vg_1)\overline{\vartheta(vg_1,\phi)}dv.
\end{equation*}
Then for $\beta\in \Sym_{n-1}(k)$, we consider the 
$\beta$-th Fourier coefficient $\FJ_{\beta}^\phi(g_1;A)$ of $\FJ^\phi(g_1;A)$.

 Let $\psi$
be our fixed additive character of $k \lmod \A$ again. With this setup
we will use the character $\psi_{S}$ as the central character in the
representation of the Heisenberg group discussed above and add subscripts in the Weil representation $\omega_S$ and Fourier-Jacobi coefficients $\FJ_S^\phi$ to indicate the additive character involved.

As we follow a different convention of the relation between quadratic
form and bilinear form from that of \cite{MR1863861}, to avoid confusion we
state and prove the following lemma.
\begin{lemma}\emph{\cite[Lemma 4.1]{MR1863861}}\quad
  \label{lemma:fj2fourier}
  Let $S\in k^\times$ and $\beta\in \Sym_{n-1}(k)$. Let $A$ be an
  automorphic form on $\til{G}(\A)$, and assume that
  $\FJ_{S,\beta}^\phi(g_1;\rho(f)A)=0$ for all $\phi\in \cS_0(L(\A))$
  and all $f$ in the Hecke algebra $\cH(\til{G}(\A))$. Then $A_B=0$ where
  \begin{equation*}
    B=
    \begin{pmatrix}
      S/2 & \\ & \beta
    \end{pmatrix}.
  \end{equation*}
\end{lemma}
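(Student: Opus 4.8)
The plan is to unfold the definition of $\FJ_{S,\beta}^\phi$ and rewrite it as a pairing, against the test function $\phi$, of a one-parameter family of $N$-Fourier coefficients of $A$ that specialises to $A_B$; non-degeneracy of the pairing then forces $A_B=0$. First I would record how the unipotent radical $N\cong\Sym_n$ of the Siegel parabolic sits relative to $V$. Writing a symmetric matrix as $T=\smatrix{t}{r}{\trpz r}{T'}$ with $t\in k$, $r$ a row of length $n-1$ and $T'\in\Sym_{n-1}$, the centre $Z$ of $V$ accounts for the entry $t$ (via $v(0,0,z)=n(\diag(z,0))$), the subgroup $L^*$ for the off-diagonal row $r$ (via $v(0,y,0)=n(\smatrix{0}{y}{\trpz y}{0})$), and $N_1$ for the block $T'$, while $L$ lands in the Siegel Levi. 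Thus matching $B=\diag(S/2,\beta)$ means arranging $t=S/2$, $r=0$ and $T'=\beta$.

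Second, I would integrate over $[V]$ in stages against $\overline{\vartheta(vg_1,\phi)}$ using \eqref{eq:heisenberg_action}. Integrating over the centre $[Z]$ against the central character $\psi_S$ of the theta kernel extracts exactly the part of the Fourier expansion of $A$ along $Z$ with $t=S/2$, which is where the factor $\half$ enters and pins $B_{11}=S/2$. On the remaining abelian quotient $W=L\oplus L^*$ the computation is a Jacobi-form integral: after the substitution $w=t+x$ the theta phase collapses to $\sum_{w\in x+L(k)}\overline{(\omega(g_1)\phi)(w)}\,\psi_S(-\form{w}{y})$, and the sum over $L(k)$ together with the integration over $x\in[L]$ unfolds, using the left $G(k)$-invariance of $A$ under $L(k)\subset M(k)$, to an integration of $\omega(g_1)\phi$ over the full $L(\A)$. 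Integrating over $y\in[L^*]$ then selects the $L^*$-Fourier mode of $A$ whose off-diagonal row is $r=r(w)$, a fixed linear function of $w$ with $r(0)=0$; finally the $\beta$-th Fourier coefficient along $N_1$ pins $T'=\beta$. The outcome is an identity of the shape
\[
  \FJ_{S,\beta}^\phi(g_1;A)=\int_{L(\A)} A_{T_w}(g_1)\,\overline{(\omega(g_1)\phi)(w)}\,dw,
  \qquad T_w=\smatrix{S/2}{r(w)}{\trpz{r(w)}}{\beta},\quad T_0=B,
\]
so that $A_B$ is exactly the $w=0$ member of the family.

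Third, I would conclude by non-degeneracy and a Hecke-translation argument. By hypothesis the left-hand side vanishes for every $\phi\in\cS_0(L(\A))$; since the $\til{K}_{G_1}$-finite Schwartz functions are dense in $\cS(L(\A))$, the pairing against $\overline{\omega(g_1)\phi}$ being identically zero forces the continuous function $w\mapsto A_{T_w}(g_1)$ to vanish, and evaluating at $w=0$ gives $A_B(g_1)=0$ for all $g_1\in\til{G}_1(\A)$. To promote this from the subgroup $\til{G}_1(\A)$ to all of $\til{G}(\A)$, I would rerun the argument with $A$ replaced by its right translates $\rho(f)A$: since forming the $N$-Fourier coefficient commutes with right translation one has $(\rho(f)A)_B=\rho(f)(A_B)$, so the hypothesis yields $\int_{\til{G}(\A)}f(g_0)A_B(g_1g_0)\,dg_0=0$ for all $f\in\cH(\til{G}(\A))$ and all $g_1\in\til{G}_1(\A)$; specialising $g_1$ to the identity and letting $f$ vary gives $A_B\equiv0$.

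The hard part will be the second step: propagating the Weil-representation phases $\psi_S(\half z+\form{t}{y}+\form{x}{y})$ through the non-abelian, central-character-twisted integration over $[V]$ so that one obtains precisely the family $T_w$ with $T_0=B$, and keeping the normalising constants consistent with the convention $\form{x}{x}_Q=2Q(x)$, which differs from \cite{MR1863861}. This convention is felt only through the factor $\half$ relating $S$ to $B_{11}=S/2$ and through the sign of $r(w)$; once these are fixed, the remaining bookkeeping is the unfolding of \cite[Lemma 4.1]{MR1863861} carried out essentially verbatim, and the density and Hecke-translation steps are formal.
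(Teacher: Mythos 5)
Your proposal follows the paper's proof essentially verbatim: unfold the $[V]$-integral in the Schr\"odinger model, collapse $\sum_{t\in L(k)}\int_{[L]}$ to $\int_{L(\A)}$ using the automorphy of $A$ under $L(k)$, recognise the $z$- and $N_1$-integrations as producing the character $\psi(-\tr(b(u)B))$ with $B=\diag(S/2,\beta)$, and finish by density of $\cS_0(L(\A))$ together with the Hecke-translation hypothesis. One intermediate formula is misstated, though it does not derail the argument: the correct identity is
\begin{equation*}
  \FJ_{S,\beta}^\phi(g_1;A)=\int_{L(\A)} A_{B}(wg_1)\,\overline{\omega_S(g_1)\phi(w)}\,dw,
\end{equation*}
with the \emph{fixed} coefficient $A_B$ evaluated at the translated points $wg_1$, rather than a family $A_{T_w}(g_1)$ of coefficients of $A$ at the fixed point $g_1$; pushing $w\in L(\A)$ past $[N]$ replaces $B$ by the full congruence transform $\trpz{A_w}BA_w$ (whose lower-right block is $\beta+\trpz{w}(S/2)w$, not $\beta$) but also leaves a genuine left translation by the adelic element $w$, under which $A$ is not invariant. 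Since your non-degeneracy step only uses the value of the integrand at $w=0$, where both versions give $A_B(g_1)$, the conclusion and the subsequent Hecke-translation argument go through unchanged.
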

\begin{proof}
  For $u\in N$ we set $b(u)$ to be the upper-right block of $u$ of
  size $n\times n$ and set $b_1(u)$ to be the lower-right block of
  $b(u)$ of size $(n-1)\times (n-1)$.  For $u_1 \in N_1$, also set $b_1 (u_1)$ to be the upper-right block of $u$ of
  size $( n-1) \times ( n-1)$. In the computation below the subscript $S$ in $\FJ_S^\phi$ is suppressed. We compute
  \begin{align*}
    &\quad     \FJ^\phi_\beta(g_1,A)\\
    &= \int_{[N_1]}\int_{[V]}    A(vu_1g_1)
    \overline{\vartheta(vu_1g_1,\phi)} \psi(-\tr(b_1(u_1)\beta))dv du_1\\
    &= \int_{[L]}\int_{[N]} A(uxg_1)
    \overline{\vartheta(uxg_1,\phi)}\psi(-\tr(b_1(u)\beta))du dx \\
    &=\int_{[L]}\int_{[N]} \sum_{t\in
      L(k)}
    A(uxg_1)\overline{\omega_S(tuxg_1)\phi(0)}\psi(-\tr(b_1(u)\beta)) du
    dx\\
    &=\int_{[L]}\sum_{t\in
      L(k)}\int_{[N]} 
    A(tuxg_1)\overline{\omega_S(tuxg_1)\phi(0)}\psi(-\tr(b_1(u)\beta)) du
    dx\\
    &=\int_{[L]}\sum_{t\in
      L(k)}\int_{[N]}
    A(utxg_1)\overline{\omega_S(utxg_1)\phi(0)}
    \psi(-\tr(b_1(u)\beta))du
    dx\\
    &=\int_{L(\A)}\int_{[N]}
    A(uxg_1)\overline{\omega_S(uxg_1)\phi(0)} \psi(-\tr(b_1(u)\beta))du
    dx.
\end{align*}
If $u$ is of the form
\begin{equation*}
  \left(
      \begin{array}{c|c}
        1_n &
        \begin{array}{cc}
          z & y\\ \trpz {y} &w
        \end{array}\\ \hline
        & 1_n
      \end{array}\right)
\end{equation*}
then it follows from \eqref{eq:heisenberg_action} and \eqref{eq:weil_rep_nilp}, $\omega_S(u)\phi(0) = \psi_{S}(\half z)\phi (0)$. Thus we find that $\FJ^\phi_\beta(g_1,A)$ is equal to
\begin{align*}
  & \int_{L(\A)}\int_{[N]}
    A(uxg_1)\overline{\omega_S(g_1)\phi(x)\psi_{S}(\half z)}\psi(-\tr(b_1(u)\beta))du
    dx\\
    =&\int_{L(\A)}\int_{[N]}
    A(uxg_1)\overline{\omega_S(g_1)\phi(x)}\psi(-\tr(b(u)B))du
    dx\\
    =&\int_{L(\A)}A_B(xg_1)\overline{\omega(g_1)\phi(x)}dx.
\end{align*}

Since $\FJ^\phi_\beta(g_1,A)=0$ for all $g_1\in \til{G}_1(\A)$ we
conclude that $A_B(g_1)=0$ for all $g_1\in \til{G}_1(\A)$. Then we
apply a sequence of $f_i \in \cH(\til{G}(\A))$ that converges to the
Dirac delta at $g\in \til{G}(\A)$ to conclude that $A_B(g)=0$ for
all $g\in \til{G}(\A)$.
 \end{proof}

\subsection{Fourier-Jacobi coefficients of the regularised theta
  integrals}
\label{sec:FJ_theta_integral}

The Fourier-Jacobi coefficient of the regularised
theta integrals is, by definition, given by:
\begin{equation*}
  \FJ_S^\phi(g_1;I_{\REG}(\Phi))= c_\alpha^{-1}\int_{[V]} \int_{[H]}
  \Theta(vg_1,h;\omega(\alpha)\Phi)\overline{\vartheta_S( v g_1,\phi)}dhdv.
\end{equation*}
In the computation below, it will be related to a regularised theta integral associated to a smaller orthogonal group. We set up some notation to describe the result of the computation.

Assume that $Q$ represents $S/2$. Decompose $U$ into an orthogonal sum $k\oplus U_1$ such that the
bilinear form $\form{\;}{\;}_Q$ is equal to
$\form{\;}{\;}_{S/2}\oplus\form{\;}{\;}_{Q_1}$ where $S/2$ and $Q_1$ are
quadratic forms on $k$ and $U_1$ respectively. In fact $S/2$ is just a
scalar in $k^\times$ and so $\form{x}{y}_{S/2}=Sxy$. Let $H_1=\rO(U_1)$.
Put
\begin{equation*}
  \Psi(u,\Phi,\phi)=\int_{L(\A)} \Phi
  \begin{pmatrix}
    1 & x\\ 0 & u
  \end{pmatrix}\overline{\phi(x)}dx
\end{equation*}
for $u\in U_1^{n-1}(\A)$ and one can check that this is a Schwartz
function in $u$. It defines a
$\til{G}_1(\A)$-intertwining map:
\begin{align*}
  \cS(U^n(\A))\otimes \cS(L(\A))& \rightarrow \cS(U_1^{n-1}(\A))\\
  \Phi \otimes \phi &\mapsto \Psi(\Phi,\phi)
\end{align*}
i.e.,
\begin{equation*}
  \omega_{\psi,Q_1} (g_1)\Psi(\Phi,\phi)=\Psi(\omega_{\psi,Q} (g_1)\Phi,\omega_{\psi_S} (g_1)\phi)
\end{equation*}
for $g_1\in \til{G}_1(\A)$. We indicate in the subscripts what additive characters and what quadratic spaces are used in the Weil representations. When there is no confusion, the subscripts will be dropped. Usually we will write $\omega_S$ for $\omega_{\psi_S}$. Notice that on $\cS(U^n(\A))$ and
$\cS(U_1^{n-1}(\A))$ the Weil representations are associated with the
character $\psi$ and on $\cS(L(\A))$ the Weil representation is
associated with the character $\psi_{S}$. Then we have:

\begin{proposition}\quad
  \label{prop:fj_theta} Assume $m=n+1$.
  Suppose that $\beta\in \Sym_{n-1}(k)$ with $\det(\beta)\neq 0$. Then
  \begin{equation*}
    \FJ^\phi_{S,\beta}(g_1; I_{\REG}(\Phi)) =\int_{H_1(\A)\lmod H(\A)} I_{\REG,\beta}(g_1,
    \Psi(\omega(h)\Phi,\phi)) dh
  \end{equation*}
if $Q$ represents $S/2$. The integral on the right-hand side is  absolutely convergent. If $Q$ does not represent $S/2$, $\FJ^\phi_{S,\beta}(g_1; I_{\REG}(\Phi))$ vanishes. 
\end{proposition}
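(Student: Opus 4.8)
The plan is to unfold all the definitions and perform the integration over the Heisenberg group $V$ and the extraction of the $\beta$-th Fourier coefficient by hand, thereby collapsing the theta kernel for the pair $(\til G(\A),H(\A))$ to the theta kernel for the smaller pair $(\til G_1(\A),H_1(\A))$, and then unfolding the $H$-integral along the stabiliser of a fixed vector of norm $S/2$. First I would substitute $\omega(\alpha)\Phi$ for $\Phi$, so that $I(g,\omega(\alpha)\Phi)$ is absolutely convergent, and write $\FJ_{S,\beta}^\phi(g_1;I_{\REG}(\Phi))$ as $c_\alpha^{-1}$ times a triple integral over $[N_1]$, $[V]$ and $[H]$ of $\Theta(\cdot,h;\omega(\alpha)\Phi)\,\overline{\vartheta_S(\cdot,\phi)}$ weighted by $\psi(-\tr(b_1(n_1)\beta))$, exactly as in the proof of Lemma~\ref{lemma:fj2fourier}. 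Absolute convergence then permits moving $\int_{[H]}$ to the outside and treating the inner Fourier--Jacobi operation for one $h$ at a time.

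The core is the inner computation. Expanding $\Theta(vg_1,h;\cdot)$ as a sum over $u=(u_1,\dots,u_n)\in U^n(k)$ and using \eqref{eq:weil_rep_nilp}, the central variable $z\in Z$ acts on the Schwartz function by $\psi(zQ(u_1))$, while by \eqref{eq:heisenberg_action} the conjugate $\overline{\vartheta_S}$ contributes the central character $\psi(-\tfrac S2 z)$; integration over $[Z]$ therefore annihilates every term with $Q(u_1)\neq S/2$. This already yields the vanishing statement: if $Q$ does not represent $S/2$ there is no lattice vector $u_1$ with $Q(u_1)=S/2$ and the whole expression is zero. When $Q$ does represent $S/2$, the vectors $u_1$ of norm $S/2$ form a single $H(k)$-orbit by Witt's theorem, so I fix $e_1\in U(k)$ with $Q(e_1)=S/2$, decompose $U=ke_1\oplus U_1$ with $H_1=\rO(U_1)=\mathrm{Stab}_H(e_1)$, and rewrite the sum over such $u_1$ as a sum over $H_1(k)\lmod H(k)$ acting on $e_1$.

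Folding this sum against $\int_{[H]}$ unfolds the $H$-integral to $\int_{H_1(k)\lmod H(\A)}$. With $u_1$ now pinned to $e_1$, the remaining integration over $L(\A)$ against $\overline{\phi}$ and over $[N_1]$ against the $\beta$-character turns the residual sum over $(u_2,\dots,u_n)$ into the theta kernel for $(\til G_1,H_1)$ with Schwartz function $\Psi(\cdot,\phi)$, precisely as at the end of the proof of Lemma~\ref{lemma:fj2fourier}; the $\til G_1(\A)$-intertwining property of $\Psi$ lets me carry $g_1$ through. Writing $\int_{H_1(k)\lmod H(\A)}=\int_{H_1(\A)\lmod H(\A)}\int_{[H_1]}$ and observing that the inner $[H_1]$-integral is the $\beta$-th Fourier coefficient of the theta integral for $H_1$, I obtain $\int_{H_1(\A)\lmod H(\A)} I_\beta\bigl(g_1,\Psi(\omega(h)\Phi,\phi)\bigr)\,dh$ once $\omega(\alpha)$ is restored in the Schwartz function.

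It remains to reinstate the regularisation and control convergence, which is where the real work lies. Since $\beta$ is nonsingular, the inner $\beta$-coefficient is a nonsingular Fourier coefficient of the $H_1$-theta integral, and using the eigenvalue relation $\theta(\alpha)\mathbf 1=c_\alpha\mathbf 1$ the surviving factor $c_\alpha^{-1}$ together with $\omega(\alpha)$ reconstitutes the regularised integral $I_{\REG,\beta}$ for $H_1$; verifying that the Hecke-operator regularisation of the big pair descends to that of the small pair with the matching constant is the delicate step. The remaining, and explicitly asserted, obstacle is the absolute convergence of the quotient integral $\int_{H_1(\A)\lmod H(\A)}$, which I would deduce from the Schwartz decay of $h\mapsto\Psi(\omega(h)\Phi,\phi)$ along the affine quadric $\{Q=S/2\}\cong H_1\lmod H$; this convergence is also what retroactively justifies the interchange of $\int_{[H]}$ with the Fourier--Jacobi operations.
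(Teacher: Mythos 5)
Your proposal is correct and follows essentially the same route as the paper: unfold over $V$, let the central character force $Q(u_1)=S/2$ (giving the vanishing statement), apply Witt's theorem to unfold the $H$-integral along the orbit of a norm-$S/2$ vector, collapse the residual data to $\Psi$ and the $(\til G_1,H_1)$-theta kernel, and cancel $c_\alpha^{-1}$ against $\omega(\alpha)=\omega(\theta(\alpha))$ via the eigenvalue relation once absolute convergence over the affine quadric $H_1\lmod H$ is established. The two points you flag as delicate are exactly the ones the paper isolates, namely the descent of the regularisation (handled by absorbing $\theta(\alpha)$ into the full $H(\A)$-integration) and the convergence lemma borrowed from Kudla--Rallis.
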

\begin{proof}
  By the definition of Fourier-Jacobi coefficients, we need to compute
  the following integral:
  \begin{equation}\label{eq:FJ_IREG_beta}
    \begin{split}
      & \FJ^\phi_{S,\beta}(g_1; I_{\REG}(\Phi))\\
      =& c_\alpha^{-1}\int_{[N_1]}\int_{[V]}
      \int_{[H]}
      \theta(vn_1g_1,h_0,\omega(\alpha)\Phi)\overline{\vartheta_S(vn_1g_1,\phi)}\psi(-\tr b_1(n_1)\beta)dh_0 dv dn_1.
    \end{split}
  \end{equation}
  Here $b_1(n_1)$ is the upper-right block of $n_1$.

  Formally we exchange order of integration so that we integrate over
  $v$ first. This will be justified later.  Temporarily
  absorb $\omega(\alpha)$ into $\Phi$. We consider
  \begin{align}\label{eq:int_v}
    \int_{[V]}\theta(vg_1,h_0,\Phi)\overline{\vartheta_S(vg_1,\phi)}dv.
  \end{align}
  We  expand out the action of $v\in V (\A)$ and observe that
  some terms must vanish.

  Suppose $v=v(x,0,0)v(0,y,z)$. Then
  \begin{align*}
    &  \theta(vg_1,h_0,\Phi) \\
    =&\sum_{t\in U^n(k)} \omega(vg_1,h_0)\Phi(t)\\
    =&\sum_{t\in U^n(k)} \omega(v(0,y,z)g_1,h_0)\Phi(t
    \begin{pmatrix}
      1&x\\ & 1
    \end{pmatrix})\\
    =&\sum_{t\in U^n(k)} \omega(g_1,h_0)\Phi(t
    \begin{pmatrix}
      1&x\\ & 1
    \end{pmatrix})\psi(\frac{1}{2}\tr \left(\form{t
        \begin{pmatrix}
          1&x\\ & 1
        \end{pmatrix}
      }{t
        \begin{pmatrix}
          1&x\\ & 1
        \end{pmatrix}}_Q
      \begin{pmatrix}
        z&y\\ \trpz{y}&
      \end{pmatrix}\right))\\
    =&\sum_{t=\left(
        \begin{smallmatrix}
          t_1 & t_2\\ t_3 & t_4
        \end{smallmatrix}\right)}
    \omega(g_1,h_0)\Phi(t\begin{pmatrix} 1&x\\ & 1
    \end{pmatrix}) \psi(\frac{1}{2}\form{
      \begin{pmatrix}
        t_1\\t_3
      \end{pmatrix}}{\begin{pmatrix} t_1\\t_3
      \end{pmatrix}}_{Q}(z+2x\trpz{y}))
    \psi(\form{
      \begin{pmatrix}
        t_1\\ t_3
      \end{pmatrix}
    }{
      \begin{pmatrix}
        t_2\\t_4
      \end{pmatrix}
    }_Q\trpz{y})
  \end{align*}
  where $t_1\in k$, $t_2\in k^{n-1}$, $t_3\in U_1(k) $ and $t_4\in
  U_1^{n-1}(k)$. Also we expand
  \begin{equation*}
      \vartheta_S(vg_1,\phi)
    = \sum_{t\in
      L(k)}\omega_S(g_1)\phi(t+x)\psi_{S}(\half z+ \form{x}{y}+ \form{t}{y}).
  \end{equation*}
  Thus if we integrate against $z$, the integral \eqref{eq:int_v}
  vanishes unless
  \begin{equation*}
    \form{
      \begin{pmatrix}
        t_1\\t_3
      \end{pmatrix}}{\begin{pmatrix} t_1\\t_3
      \end{pmatrix}}_{Q}=S.
  \end{equation*}
We continue the computation assuming that $Q$ represents $S/2$.
  By Witt's theorem there exists some $h\in H(k)$ such that
  \begin{equation*}
    \begin{pmatrix}
      t_1\\t_3
    \end{pmatrix}=h^{-1}
    \begin{pmatrix}
      1\\0
    \end{pmatrix},
  \end{equation*}
because by our decomposition of $U$, 
\begin{equation*}
  \form{\begin{pmatrix}
      1\\0
    \end{pmatrix}}{\begin{pmatrix}
      1\\0
    \end{pmatrix}}_Q = S.
\end{equation*}
  Then the stabiliser of $\left(
    \begin{smallmatrix}
      1\\0
    \end{smallmatrix}
  \right)$ in $H(k)$ is $H_1(k)$. After changing $\left(
    \begin{smallmatrix}
      t_2\\t_4
    \end{smallmatrix}\right)
  $ to $h^{-1}\left(
    \begin{smallmatrix}
      t_2\\t_4
    \end{smallmatrix}\right)$ we find that \eqref{eq:int_v} is equal
  to
  \begin{align*}
    &\int_{[L^*]}\int_{[L]} \sum_{h\in H_1(k)\lmod H(k)}\sum_{ t_2 ,
      t_4 }\sum_{t\in L(k)}
    \omega(g_1,1_H)\Phi(h_0^{-1}h^{-1}
    \begin{pmatrix}
      1 & t_2\\ 0 & t_4
    \end{pmatrix}
    \begin{pmatrix}
      1&x\\ & 1
    \end{pmatrix})
    \overline{\omega_S(g_1)\phi(t+x)} \\
    &\quad\times\psi(\form{
      \begin{pmatrix}
        1\\ 0
      \end{pmatrix}
    }{
      \begin{pmatrix}
        t_2\\t_4
      \end{pmatrix}
    }_Q\trpz{y})\psi_{S}(-\form{t}{y}) dxdy  \\
    =&\int_{[L^*]}\int_{[L]} \sum_{h\in H_1(k)\lmod H(k)}\sum_{ t_2 ,
      t_4 }\sum_{t\in L(k)} \omega(g_1,1_H)\Phi(h_0^{-1}h^{-1}
    \begin{pmatrix}
      1 & t_2\\ 0 & t_4
    \end{pmatrix}
    \begin{pmatrix}
      1&x\\ & 1
    \end{pmatrix})\\
    &\quad\times\overline{\omega_S(g_1)\phi(t+x)}
    \psi(St_2\trpz{y})\psi_{S}(-\form{t}{y}) dxdy.
  \end{align*}
  Now the integration against $y$ vanishes unless $t=t_2$ and we find that
  the above is equal to
  \begin{align}
    & \int_{ [L]} \sum_{h\in H_1(k)\lmod H(k)} \sum_{ t_4
    } \sum_{t\in L (k)}\omega(g_1,1_H)\Phi(h_0^{-1}h^{-1}
    \begin{pmatrix}
      1 & t\\ 0 & t_4
    \end{pmatrix}
    \begin{pmatrix}
      1&x\\ & 1
    \end{pmatrix})
    \overline{\omega(g_1,1_{H_1})\phi(t+x)}dx  \nonumber\\ 
\label{eq:FJ_IREG_beta-2}
    &= \sum_{h\in H_1(k)\lmod H(k)} \sum_{ t_4\in U_1^{n-1}(k)}
    \omega(g_1,1_{H_1})\Psi(t_4,\omega(1_{\til {G} (\A)},hh_0)\Phi,\phi).
  \end{align}

  Then we consider the integration over $[N_1]$ in
  \eqref{eq:FJ_IREG_beta}. This will kill those terms in \eqref{eq:FJ_IREG_beta-2} such that
  $\form{t_4}{t_4}_{Q_1}\neq 2\beta$. Unabsorbing $\omega (\alpha)$ from $\Phi$, we find that \eqref{eq:FJ_IREG_beta} is
  equal to
  \begin{align}\label{eq:FJ_midstep}
    c_\alpha^{-1}\int_{[H]}\sum_{h\in H_1(k)\lmod
      H(k)}\sum_{\substack{t\in U_1^{n-1}(k),\\ \form{t}{t}_{Q_1}=
        2\beta} }
    \omega(g_1,1_{H_1})\Psi(t,\omega(1_{\til {G} (\A)},hh_0)\omega(\alpha)\Phi,\phi) dh_0.
  \end{align}

  We assume that $\form{\cdot}{\cdot}_{Q_1}$ represents $2\beta$,
  since otherwise the two expressions in the statement of the lemma
  are both zero and the lemma holds trivially.  As $\rk \beta = n-1$,
  $\Omega_{\beta}=\{t\in U_1^{n-1}(k) | \form{t}{t}_{Q_1}=2\beta \}$ is a single
  $H_1(k)$-orbit. Fix a representative $t_0$ of this orbit. Since the
  stabiliser of $t_0$ in $H_1(k)$ is of order $2$,
  \eqref{eq:FJ_midstep} is equal to
  \begin{equation*}
    \half c_\alpha^{-1}  \int_{[H]} \sum_{h \in H(k)}
    \omega(g_1,1_{H_1}) \Psi(t_0,\omega(1_{\til {G} (\A)},h  h_0)\omega(\alpha)\Phi,\phi) dh_0.
  \end{equation*}
  It can be checked that an analogue of the convergence lemma 
  \cite [Lemma 4.3] {MR1863861} also holds for our case and this is recorded
  here as Lemma~\ref{lemma:convergence} which shows that the above
  integral is absolutely convergent. Thus we have justified exchanging
  the order of integration in
  $\FJ_{S,\beta}^\phi(g_1;I_{\REG}(\Phi))$. We now continue the
  computation from \eqref{eq:FJ_midstep} and find that \eqref{eq:FJ_IREG_beta} is equal to
  \begin{align*}
    &    c_\alpha^{-1}\int_{H_1 (k) \lmod H (\A)}\sum_{ \form{t}{t}_{Q_1}=
      2\beta }
    \omega(g_1,1_{H_1})\Psi(t,\omega(1_{\til {G} (\A)},h)\omega(\alpha)\Phi,\phi) dh \\
=&c_\alpha^{-1}\int_{H_1 (\A) \lmod H (\A)} \int_{[H_1]}\sum_{ \form{t}{t}_{Q_1}=
      2\beta }
    \omega(g_1,h')\Psi(t,\omega(1_{\til {G} (\A)},h)\omega(\alpha)\Phi,\phi) dh' dh \\
=& \int_{H_1 (\A) \lmod H (\A)} \int_{[H_1]}\sum_{ \form{t}{t}_{Q_1}=
      2\beta }
    \omega(g_1,h')\Psi(t,\omega(1_{\til {G} (\A)}, h)\Phi,\phi) dh' dh \\
&=\int_{H_1(\A)\lmod H(\A)}
    I_{\REG,\beta}(g_1,\Psi(\omega(1_{\til {G} (\A)},h)\Phi,\phi))dh.
  \end{align*}
\end{proof}

\begin{lemma}\quad
  \label{lemma:convergence}
  Assume $m=n+1$.
  \begin{enumerate} 
  \item Let $t\in U^n(k)$. If $\rk t = n$ then $\int_{H(\A)}
    \omega(h)\Phi(t)dh$ is absolutely convergent for any $\Phi\in
    \cS(U^n(\A))$.
  \item Let $t_1\in U_1^{n-1}(k)$. If $\rk t_1=n-1$ then
    \begin{equation*}
      \int_{H(\A)} \Psi(\omega(h)\Phi,\phi,t_1)dh =
      \int_{H(\A)}\int_{L(\A)}\omega(h) \Phi
      \begin{pmatrix}
        1&x\\0&t_1
      \end{pmatrix}\overline{\phi(x)}dxdh
    \end{equation*}
    is absolutely convergent for any $\Phi\in\cS(U^n(\A))$ and
    $\phi\in\cS(L(\A))$.
  \end{enumerate}
\end{lemma}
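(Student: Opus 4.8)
The plan is to prove the absolute convergence of the integral
\[
  \int_{H(\A)} \omega(h)\Phi(t)\,dh
\]
(and its companion in part (2)) by reducing the question to the decay of matrix coefficients of the Weil representation along $H(\A)$, and then comparing that decay against the growth of the $H(\A)$-volume. The key structural input is the rank hypothesis: when $\rk t = n$, the stabiliser of $t$ in $H$ is trivial (for part (1)) or small (for part (2)), so the orbit map $H \to U^n$, $h \mapsto \omega(h)\Phi(t) = \Phi(h^{-1}t)$ embeds $H$ essentially as a closed subvariety of $U^n$, and the Schwartz function $\Phi$ supplies rapid decay in the appropriate variables.

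First I would reduce to a purely local-archimedean and local-nonarchimedean volume estimate. Factor $\Phi = \otimes_v \Phi_v$ (or bound a general $\Phi$ by a finite sum of such pure tensors), and write $H(\A)$ as a restricted product. At almost all finite places $\omega(h)\Phi_v(t)$ is the characteristic function of a compact-open set intersected with $K_{H,v}$-invariance, contributing a convergent Euler product; the real work is at the archimedean places and at the finitely many ramified finite places. There I would use a reduction-theory / polar (Cartan) decomposition $H(k_v) = K_{H,v} A_v^+ K_{H,v}$, so that the integral becomes, up to the compact factors, an integral over the positive chamber $A_v^+$ against the Jacobian (the standard $\prod \sinh$-type weight, or its $q$-analogue). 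The matrix coefficient $\Phi_v(h^{-1}t)$ decays like a Schwartz function in the entries of $h^{-1}t$, and because $\rk t$ is maximal these entries grow at least like the dominant torus parameter $a \in A_v^+$; hence the integrand decays faster than any polynomial in $a$ while the Jacobian grows only exponentially in the same parameter, giving convergence.

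The second statement follows the same template after one preparatory step. Here the relevant orbit is that of $t_1 \in U_1^{n-1}$ with $\rk t_1 = n-1$ inside the \emph{smaller} orthogonal group, but the integral is still over all of $H(\A)$; the extra Schwartz integration over $L(\A)$ (the variable $x$) is harmless because $\Psi(\omega(h)\Phi,\phi,t_1)$ is itself Schwartz (as already noted in the text, $\Psi(\Phi,\phi)$ is a Schwartz function), so I would first absorb the $x$-integral to replace $\Phi$ by the Schwartz function $\Psi(\Phi,\phi)$ on $U_1^{n-1}(\A)$, and then apply part (1) with $(U_1,n-1)$ in place of $(U,n)$ — the maximal-rank hypothesis transfers exactly. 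The only subtlety is that the stabiliser of $t_1$ in $H$ (as opposed to $H_1$) may be nontrivial, but it is still compact at each place, so it merely contributes a finite volume factor and does not affect convergence.

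The main obstacle I expect is making the decay-versus-growth comparison uniform across all places simultaneously, i.e.\ controlling the tail of the restricted product rather than each local factor in isolation. Concretely, one must show that the local integrals are bounded by $1 + O(q_v^{-c})$ for a fixed $c>0$ at the unramified finite places so that the global product converges; this is where the precise normalisation of the Weil representation (the factors $\chi_v$, $\chi_{\psi_v}$, and $|\det A|_v^{m/2}$ in the formulas $\omega_v(m(A))\Phi_v(X) = \chi_v(A)\chi_{\psi_v}(A)|\det A|_v^{m/2}\Phi_v(XA)$) must be tracked, since the $|\det A|^{m/2}$-weight is exactly what competes with the orbital growth. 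Since this is a direct analogue of \cite[Lemma 4.3]{MR1863861}, the cleanest route is to follow Ichino's argument verbatim, checking only that our opposite sign convention in $\form{\cdot}{\cdot}_Q = 2Q$ and the resulting constants do not disturb the estimates; the convergence itself is insensitive to these signs.
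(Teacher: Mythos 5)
Your part (1) is in the right spirit: the paper simply quotes the estimate of Kudla--Rallis \cite[pp.~59--60]{MR1289491}, whose content is exactly the decay-versus-volume comparison you sketch (with the caveat that the stabiliser of a rank-$n$ element $t$ is the finite group $\rO(1)=\{\pm1\}$ rather than trivial, and that some care is needed when the Gram matrix $\form{t}{t}_Q$ is singular, since the orbit need not be closed -- this is where the exclusion of the split binary space and the hypothesis $m=n+1$ enter). So for (1) you are essentially reproving the cited lemma, which is acceptable but not what the paper does.

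Part (2), however, has a genuine gap. You propose to ``absorb the $x$-integral to replace $\Phi$ by $\Psi(\Phi,\phi)\in\cS(U_1^{n-1}(\A))$ and then apply part (1) with $(U_1,n-1)$ in place of $(U,n)$.'' This reduction fails for two reasons. First, the map $\Phi\otimes\phi\mapsto\Psi(\Phi,\phi)$ intertwines only the $\til{G}_1(\A)$-action (at best also the $H_1(\A)$-action, which fixes the $k$-component of $U=k\oplus U_1$); for general $h\in H(\A)$ one has
\begin{equation*}
  \Psi(\omega(h)\Phi,\phi,t_1)=\int_{L(\A)}\Phi\Bigl(h^{-1}\begin{pmatrix}1&x\\0&t_1\end{pmatrix}\Bigr)\overline{\phi(x)}\,dx
  \;\neq\;\bigl(\omega(h)\Psi(\Phi,\phi)\bigr)(t_1),
\end{equation*}
since $h$ does not preserve the splitting $k\oplus U_1$. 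Second, even granting the reduction, part (1) for $(U_1,n-1)$ only controls an integral over $H_1(\A)$, whereas the integral in (2) is over all of $H(\A)$; the quotient $H_1(\A)\lmod H(\A)$ has infinite volume, and your claim that the stabiliser of $t_1$ in $H$ is compact is false in general -- it is generically the orthogonal group of the $2$-dimensional space $U_{t_1}^{\perp}$, which is non-compact whenever that space is isotropic. The convergence in (2) is saved precisely by the extra column $\trpz{(1,0)}$ that your reduction throws away: the paper sets
\begin{equation*}
  \varphi(u)=\int_{L(\A)}\Phi\Bigl(u\begin{pmatrix}1&x\\&1_{n-1}\end{pmatrix}\Bigr)\overline{\phi(x)}\,dx,
\end{equation*}
checks $\varphi\in\cS(U^n(\A))$, and observes that $\Psi(\omega(h)\Phi,\phi,t_1)=\varphi(h^{-1}t)$ with $t=\smatrix{1}{0}{0}{t_1}\in U^n(k)$ of rank $n$, so that part (1) for the \emph{original} pair $(U,n)$ applies. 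You should replace your reduction by this one.
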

\begin{proof}
  The argument in \cite[pp. 59-60]{MR1289491}
  also includes the case $m=n+1$ and it proves (1). For (2) consider
  the function on $U^n(\A)$
  \begin{equation*}
    \varphi(u)=\int_{L(\A)}\Phi(u
    \begin{pmatrix}
      1& x\\ & 1_{n-1}
    \end{pmatrix})\overline{\phi(x)}dx.
  \end{equation*}
  This integral is absolutely convergent and defines a smooth function
  on $U^n(\A)$. Furthermore it can be checked that
  $\varphi\in\cS(U^n(\A))$. Then we apply (1) to get (2).
\end{proof}

\subsection{Fourier-Jacobi coefficients of the Siegel Eisenstein
  series}
\label{sec:FJ_eisenstein}

In this subsection we compute the Fourier-Jacobi coefficients of the Siegel
Eisenstein series
\begin{equation*}
  \FJ_S^\phi(g_1, E(f,s))=\int_{V(k)\lmod V(\A)}E(vg_1,f,s)\overline{\vartheta_S(vg_1,\phi)}dv.
\end{equation*}
We will show a relation between the Fourier-Jacobi coefficient of the
Eisenstein series and an Eisenstein series associated to groups of lower rank. This is parallel to Prop.~\ref{prop:fj_theta}.  Let $\chi_1$ be the
character of $\GL_{n-1}(\A)$ defined analogously to $\chi$ in
\eqref{eq:chi-U-v} except that it is associated to $Q_1$
instead of $Q$. Recall that $Q_1$ is a certain quadratic form given at the beginning of Sec.~\ref{sec:FJ_theta_integral}. We may view the character $\chi_\psi$ as a character of $\til {\GL}_{n-1} (\A)$ as well. Most of  the computation here is due to Ikeda
\cite{MR1411571}. We show that
Lemma~\ref{lemma:SW_n=1} holds for more cases than was shown by Ikeda
and this is key for the case where $(m,r)=(3,1)$ and $n=2$.

\begin{proposition}\label{prop:FJ-R}\quad
  \begin{enumerate}
  \item For $\phi\in \cS_0(L(\A))$ we have
  \begin{equation*}
    \FJ_S^\phi(g_1, E(f,s)) = \sum_{\gamma\in P_1(k)\lmod G_1(k)}
    R_S(\gamma g_1, f, s,\phi)
  \end{equation*}
  where for $\Re s  >>0$
  \begin{equation}\label{eq:R}
    R_S(g_1,f,s,\phi) = \int_{L(\A)} \int_\A
    f(w_nv(0,y,z)w_{n-1}g_1,s)\overline{\omega_S(g_1)\phi(-y)\psi_{S}(\half z)}dz dy
  \end{equation} 
  is a holomorphic section of
  $\Ind_{P_1(\A)}^{G_1(\A)}(\chi_1,s)$  if $m$ is odd and is a holomorphic section of $\Ind_{\til { P}_1(\A)}^{\til { G}_1(\A)}(\chi_1\chi_\psi,s)$ if $m$ is even.
\item 
   The section $R_S(g_1,f,s,\phi)$ is absolutely convergent for
  $\Re s > -(n-3)/2$ and can be meromorphically continued to the domain
  $\Re s > -(n-1)/2$.
\item
 The section $R_S(g_1,f,s,\phi)$ is holomorphic in $\Re s > - (n-2)/2$.
\item
 When $m$ is odd, the section $R_S(g_1,f,s,\phi)$ is holomorphic in  $\Re s > - (n-1)/2$ if $S$ is not in the square class of $(-1)^{(m (m-1)/2)}\Delta_Q$.
  \end{enumerate}
\end{proposition}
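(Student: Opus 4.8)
The plan for (1) is an unfolding computation in the style of Ikeda. I would insert $E(vg_1,f,s)=\sum_{\gamma\in P(k)\lmod G(k)} f(\gamma v g_1,s)$ into $\FJ_S^\phi(g_1,E(f,s))=\int_{[V]}E(vg_1,f,s)\overline{\vartheta_S(vg_1,\phi)}\,dv$ and reorganize the coset sum by the orbits of the Jacobi subgroup $V(k)\rtimes G_1(k)$ acting on $P(k)\lmod G(k)$. Collapsing the $[V]$-integral against the stabilizer in each orbit, the central variable $z$ of the Heisenberg group gets integrated against the central character $\psi_S$ of $\vartheta_S$; this forces every orbit to contribute zero except the unique open orbit, whose representative may be chosen so that the surviving term is $f(w_n v(0,y,z)w_{n-1}\gamma' g_1,s)$ with $\gamma'$ running over $P_1(k)\lmod G_1(k)$. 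Carrying out the residual $V$-integration in $(y,z)$ then produces exactly $R_S(\gamma' g_1,f,s,\phi)$, and the outer sum over $\gamma'$ reassembles the Siegel Eisenstein series on $G_1$ asserted in (1). To see that $R_S$ is a section of the stated induced representation I would check left-equivariance under $\til P_1(\A)$ directly: moving $p_1\in\til P_1(\A)$ past $v(0,y,z)$, $w_n$ and $w_{n-1}$ produces a change of variables in $(y,z)$ whose Jacobian is a power of $|a(p_1)|$, while $f$ contributes its character $\chi_U\chi_\psi$ and $\overline{\omega_S(g_1)\phi}$ contributes the Weil-representation factor. Bookkeeping the metaplectic cocycle contributions then yields the character $\chi_1$ when $m$ is odd and $\chi_1\chi_\psi$ when $m$ is even; the parity flip simply records that $\dim U_1=m-1$ has opposite parity to $\dim U=m$.

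For (2)--(4) the object to analyze is the adelic integral $R_S$ as a function of $s$. The plan is to factor the data into local pieces and study $R_{S,v}$ place by place. At the almost-all unramified places I would compute $R_{S,v}$ on spherical input explicitly: integrating a spherical section of the degenerate principal series $I_v(\chi_{U,v}\chi_{\psi_v},s)$ against the spherical Weil-representation vector yields the normalized spherical section of the descended induced representation on $\til G_1(k_v)$, times a local abelian L-factor $L_v(2s+n-1,\eta_{S,v})$, where $\eta_S$ is the quadratic Hecke character attached to the square class of $(-1)^{m(m-1)/2}\Delta_Q\,S$. Assembling the Euler product, $R_S$ equals, up to the finitely many remaining local integrals, the global L-function $L(2s+n-1,\eta_S)$ times a holomorphic normalized section. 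The abscissa of absolute convergence $\Re s>-(n-3)/2$ comes from the convergence region of the defining integral at the bad places (the more restrictive constraint); the meromorphic continuation to $\Re s>-(n-1)/2$ is exactly the continuation of $L(\,\cdot\,,\eta_S)$ on the half-line where its argument $2s+n-1$ exceeds $0$. On that region $L(2s+n-1,\eta_S)$ has at most a single simple pole, at $2s+n-1=1$, i.e. at $s=-(n-2)/2$, and that pole is present precisely when $\eta_S$ is trivial. This gives (3), and its refinement (4): when $S$ is not in the square class of $(-1)^{m(m-1)/2}\Delta_Q$ the character $\eta_S$ is a nontrivial quadratic Hecke character, its L-function is entire, and $R_S$ is holomorphic on the whole continued region $\Re s>-(n-1)/2$.

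It remains to control the finitely many ramified and archimedean local factors, and this is where I expect the real difficulty to lie. One must show that these bad local integrals converge, continue, and --- crucially for (4) --- are holomorphic with no pole at $s=-(n-2)/2$ in the stated ranges, uniformly in the square-class hypothesis. At the non-archimedean bad places this follows by adapting Ikeda's local holomorphy estimates \cite{MR1411571}; the delicate case is archimedean, since (as emphasized in the introduction) the full analytic continuation of the relevant degenerate functionals is unavailable there. The Fourier-Jacobi mechanism is designed to sidestep that gap, but one still needs Ikeda-style estimates on the archimedean $R_{S,v}$, now pushed one step further to the left to secure the extra holomorphy in (4). Concretely, the main obstacle is to verify that the potentially polar archimedean contribution at $s=-(n-2)/2$ is governed by the same character $\eta_{S,v}$ and therefore disappears exactly when $\eta_{S,v}$ is nontrivial; this is the essential new input beyond the computation already carried out by Ikeda, and it is precisely what makes the case $(m,r)=(3,1)$, $n=2$ accessible.
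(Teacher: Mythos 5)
Your overall route coincides with the paper's (which in turn defers to Ikeda): part (1) is the standard unfolding over orbits of the Jacobi group acting on $P(k)\lmod G(k)$, with the central variable $z$ integrated against $\psi_S$ killing all but one orbit, and parts (2)--(4) rest on the unramified Euler-product computation of $R_S$ together with control of the finitely many bad local factors. Two points need correction, one minor and one substantive. The minor one: the unramified factor is not $L_v(2s+n-1,\eta_{S,v})$. Restricting the section to the embedded $\widetilde{\SL}_2$ shifts the induction parameter to $s'=s+(n-1)/2$, and the resulting Whittaker integral produces $L^T(s+\tfrac{n}{2},\chi\chi_S)/L^T(2s+n)$, as in \cite[pp.~633--634]{MR1295945}. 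By coincidence your affine form $2s+n-1$ and the correct $s+\tfrac{n}{2}$ both equal $1$ at $s=-(n-2)/2$, so your pole location --- and hence the conclusions you draw for (3) and (4) --- survive the slip, but the factor itself is wrong and would matter for any residue computation.

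The genuine gap is in your mechanism for (4). You propose, as "the essential new input," to show that the potentially polar archimedean contribution at $s=-(n-2)/2$ is governed by the local character $\eta_{S,v}$ and "disappears exactly when $\eta_{S,v}$ is nontrivial." But the hypothesis of (4) is only that the \emph{global} character $\chi\chi_S$ is nontrivial, i.e.\ that $(-1)^{m(m-1)/2}\Delta_Q S$ is not a global square; at any individual place, in particular at every archimedean place, $\chi_v\chi_{S,v}$ may perfectly well be trivial. An argument in which the bad local factors carry poles that vanish only when the local character is nontrivial therefore cannot yield (4). The correct structure, and the one the paper uses, is that the bad local integrals are holomorphic on $\Re s>-(n-1)/2$ unconditionally --- this is part of what \cite[Thm.~3.2, Lemma~3.3]{MR1295945} establishes and is what the paper cites for (2) and (3) --- so that the entire polar part of $R_S$ in that region is carried by the partial abelian $L$-function $L^T(s+\tfrac{n}{2},\chi\chi_S)$ of the unramified places, which has no pole once $\chi\chi_S\neq 1$. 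In particular your worry about archimedean degenerate Whittaker functionals is misplaced for this proposition: no archimedean analysis beyond Ikeda's is required, only the observation about the numerator of the unramified factor.
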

\begin{proof}
  This proposition was proved in \cite[Thm.~3.2, Lemma~3.3]{MR1295945} except for the last statement which is a refinement. See also \cite [Prop.~7.1] {MR1411571}. 

Assume that $m$ is odd. The unramified computation of \eqref{eq:R} in \cite [Pages 633--634] {MR1295945} leads to the factors
\begin{equation*}
  \frac{L^T (s+ \frac{n}{2},\chi \chi_S) }{L^T (2s+n)},
\end{equation*}
where $T$ denotes the set of `ramified' places. Note that the notation $m$ in \cite{MR1295945} is $1$.
Thus when $S$ is not in the square class of $(-1)^{(m (m-1)/2)}\Delta_Q$, there is no pole for $L^T (s+ \frac{n}{2},\chi \chi_S)$. 
\end{proof}

Now we will relate $R_S(g_1,f_{\Phi},s,\phi)$ to
$\Psi(g_1,\Phi,\phi)$. First we need a lemma to integrate the
`$z$-part'. We deviate from our usual notation and let $f_\Phi$ be a
weak Siegel-Weil section associated to $\Phi\in \cS(U(\A))$.

\begin{lemma}\label{lemma:SW_n=1}\quad
  Let $n=1$ and $S\in k^\times$. Assume $(m,r)\neq (3,1), (2,1)$ or
  $(m,r)=(3,1)$ and $S$ not in the square class of $-\Delta_Q$.  Let
  $w=\left(
    \begin{smallmatrix}
      0 & 1\\ -1 &0
    \end{smallmatrix}\right)$ and $s_0=\frac{m}{2}-1$. Then for $\Phi
  \in \cS(U(\A))$,
  \begin{equation}\label{eq:SW_n=1}
    \int_{\A}f_{\Phi}(w
    \begin{pmatrix}
      1& z\\ 0& 1
    \end{pmatrix},s)\overline{\psi_{S}(z/2)}dz
  \end{equation}
  can be meromorphically continued to the whole $s$-plane and is
  holomorphic at $s=s_0$. Its value at $s=s_0$ is $0$ if $\form{\ }{\ }_Q$ does not
  represent $S$. If $Q=\left(
    \begin{smallmatrix}
      S/2 & \\ & Q_1
    \end{smallmatrix}
  \right)$ then its value at $s=s_0$ is equal to the absolutely
  convergent integral
  \begin{equation}\label{eq:fourier-coeff-theta-integral}
    c_Q\int_{H_1(\A)\lmod H(\A)}\Phi(h^{-1}
    \begin{pmatrix}
      1\\ 0
    \end{pmatrix})dh,
  \end{equation}
where $c_Q = 1$ if $m \ge 5$ or $(m,r) = (4,0), (4,1), (3,0)$ and $c_Q=2$ if $(m,r) = (2,0)$.
\end{lemma}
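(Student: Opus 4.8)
The plan is to recognise the integral \eqref{eq:SW_n=1} as the $n=1$ specialisation of the section $R_S$ of Proposition~\ref{prop:FJ-R}: when $n=1$ the group $G_1$ is trivial, $L$ and $L^\ast$ vanish, $\phi\in\cS(L(\A))$ is a constant, and $R_S(1,f,s,\phi)$ is exactly $\phi$ times \eqref{eq:SW_n=1}. Thus the meromorphic continuation and the analytic behaviour are already available, and the genuinely new content is the holomorphy at $s=s_0$ under the stated hypotheses together with the explicit value. First I would factorise: for $\Re s\gg 0$ and decomposable $\Phi=\otimes_v\Phi_v$ the integral is an Euler product $\prod_v Z_v(s,\Phi_v,S)$ with $Z_v(s,\Phi_v,S)=\int_{k_v}f_{\Phi_v}(wn(z),s)\overline{\psi_{S,v}(z/2)}\,dz$. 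For $m$ odd the unramified computation of \cite{MR1295945} recalled in the proof of Proposition~\ref{prop:FJ-R} evaluates $Z_v$ at almost all $v$ as $L_v(s+\tfrac12,\chi_v\chi_{S,v})/L_v(2s+1)$, so \eqref{eq:SW_n=1} equals $L^T(s+\tfrac12,\chi\chi_S)/L^T(2s+1)$ times finitely many local integrals, each entire in $s$ at the archimedean places and rational in $q_v^{-s}$ at the finite places; this yields meromorphic continuation to the whole $s$-plane. For $m$ even the corresponding statements are covered by Kudla--Rallis~\cite{MR1289491}.

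For holomorphy at $s_0=m/2-1$ (with $m$ odd) note $s_0+\tfrac12=(m-1)/2$. Since $2s_0+1=m-1\ge 2$ for $m\ge 3$, the factor $1/L^T(2s+1)$ is finite and nonzero at $s_0$ and contributes no pole. The numerator $L^T(s+\tfrac12,\chi\chi_S)$ is entire unless $\chi\chi_S$ is trivial, in which case $\zeta^T(s+\tfrac12)$ has its unique pole at $s=\tfrac12$, i.e.\ at $s_0$ exactly when $m=3$. Now $\chi\chi_S$ is trivial precisely when $S$ lies in the square class of $(-1)^{m(m-1)/2}\Delta_Q$, which for $m=3$ is $-\Delta_Q$; the hypothesis in the case $(m,r)=(3,1)$ removes exactly this pole. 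The point is that for isotropic $U$ the ramified local integrals carry no compensating zero, so the square-class condition is indispensable there, whereas for the anisotropic low cases the ramified local factors vanish at the critical point and no condition is needed. The case $(m,r)=(2,1)$ is the split binary case requiring separate treatment and is excluded here.

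Finally, to compute the value at $s_0$ I would use the Siegel--Weil section identity $f_\Phi(g,s_0)=\omega(g)\Phi(0)$ together with the explicit Weil-representation formulae. Since $\omega(n(z))$ acts by $\psi_v(zQ_v(\cdot))$ and $\omega(w)$ acts as a Fourier transform up to a Weil index $\gamma_v$, one gets locally $\omega(wn(z))\Phi_v(0)=\gamma_v\int_{U(k_v)}\psi_v(zQ_v(Y))\Phi_v(Y)\,dY$; integrating against $\overline{\psi_{S,v}(z/2)}=\psi_v(-Sz/2)$ produces a delta supported on the quadric $\{Y:Q_v(Y)=S/2\}$ and leaves a Leray-measure integral over it. Assembling the places via the product formula $\prod_v\gamma_v=1$ and using the transitivity of $H(\A)$ on the adelic quadric $\{Q(Y)=S/2\}$, whose stabiliser is $H_1$, identifies the continued value with $c_Q\int_{H_1(\A)\lmod H(\A)}\Phi\!\left(h^{-1}\begin{pmatrix}1\\0\end{pmatrix}\right)dh$. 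Its absolute convergence is the rank-one case of Lemma~\ref{lemma:convergence}(1) with $n=1$, and the value vanishes when $\form{\,}{\,}_Q$ fails to represent $S$: by Hasse--Minkowski this forces an empty local quadric at some $v$, whence $Z_v(s_0)=0$ and the product vanishes.

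The hard part is twofold. First, controlling the continuation across $s_0$ in the borderline low-dimensional cases: one must reconcile the pole of $\zeta^T(s+\tfrac12)$ with the isotropic/anisotropic dichotomy so as to justify precisely the stated hypotheses, and in particular verify that the ramified local factors supply cancelling zeros exactly in the anisotropic cases. Second, pinning down the constant $c_Q$, which demands careful local measure normalisation (Leray measure against the invariant measure on $H_1\lmod H$) and Weil-index bookkeeping place by place; the anomalous value $c_Q=2$ for $(m,r)=(2,0)$ reflects the two-to-one relation between the orthogonal group orbit and the quadric in the anisotropic binary case and must be tracked explicitly.
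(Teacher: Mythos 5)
Your first half (meromorphic continuation and holomorphy at $s=s_0$ via the Euler product and the unramified factor $L^T(s+\tfrac12,\chi\chi_S)/L^T(2s+1)$) is the same mechanism the paper invokes by pointing back to the proof of Proposition~\ref{prop:FJ-R}; your diagnosis that the only dangerous pole is $\zeta^T(s+\tfrac12)$ at $s=\tfrac12$, relevant exactly when $m=3$ and $S\equiv-\Delta_Q$, matches part (4) of that proposition. For the second half you diverge: you propose to evaluate the continued integral at $s_0$ by a direct local computation (Fourier inversion, the delta on the quadric $Q_v(Y)=S/2$, Leray measure, the product formula for Weil indices, and transitivity of $H(\A)$ on the quadric), whereas the paper simply cites \cite[Prop.~4.2]{MR887329} for the statement that \eqref{eq:SW_n=1} at $s=s_0$ equals \eqref{eq:fourier-coeff-theta-integral} up to a constant \emph{independent of $S$}. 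Your route is self-contained but re-proves that citation.

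The genuine gap is in pinning down $c_Q$. You correctly identify this as the hard part, but your plan --- ``careful local measure normalisation and Weil-index bookkeeping place by place'' --- is precisely the computation you do not carry out, and without it the stated values $c_Q=1$ (resp.\ $c_Q=2$ for $(m,r)=(2,0)$) are not established. The paper avoids this computation entirely: once one knows the proportionality constant is independent of $S$ (and of $\Phi$), one observes that \eqref{eq:SW_n=1} is the $S/2$-th Fourier coefficient of $E(g,s,f_\Phi)$ and \eqref{eq:fourier-coeff-theta-integral} is $c_Q$ times the $S/2$-th Fourier coefficient of $I(g,\Phi)$, and in the ranges $m\ge 5$ or $(m,r)=(4,0),(4,1),(3,0),(2,0)$ the theta integral converges absolutely and the $n=1$ Siegel--Weil formula is already known; comparing nonsingular Fourier coefficients of both sides of that known identity reads off $c_Q$ with no local normalisation at all. (Note also that the lemma deliberately leaves $c_Q$ undetermined for $(m,r)=(4,2),(3,1)$, where convergence fails; those constants are only fixed later in Proposition~\ref{prop:nonsing_fourier_coeff}. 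Your scheme would try to compute them here, which is harder than necessary.) A secondary soft spot: your claim that in the anisotropic case $(m,r)=(3,0)$ with $S\equiv-\Delta_Q$ the ramified local factors supply a cancelling zero is true but needs an argument (the local quadric $Q_{v}(Y)=S/2$ is empty at a place where $Q_v$ is anisotropic, since $Q_v\perp\langle -S/2\rangle$ has square discriminant and is then the local anisotropic quaternary form); alternatively one can get holomorphy of the Fourier coefficient directly from the known holomorphy of $E(g,s,f_\Phi)$ at $s_0$ in the convergent range, which is how the cited references proceed.
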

\begin{proof}
   From proof of Prop.~\ref{prop:FJ-R}, \eqref{eq:SW_n=1} is holomorphic at $s=s_0$. (The modular character results in the shift of $s_0$.) 
Then by  \cite[Prop.~4.2]{MR887329} the two integrals are
  equal up to a constant which does not depend on $S$.
Note that \eqref{eq:SW_n=1} is the $S/2$-th Fourier coefficient $E_{S/2}(g,s,f_\Phi)$ of
  $ E(g,s,f_\Phi)$ while \eqref{eq:fourier-coeff-theta-integral} is the $S/2$-th Fourier coefficient of $c_Q I (g,\Phi)$ if the theta integral is absolutely convergent. When $m \ge 5$ or $(m,r) = (4,0), (4,1), (3,0), (2,0)$, the theta integral is absolutely convergent. Thus from the Siegel-Weil formula for $n=1$ we get that the value of $c_Q$ are as stated in the lemma.
\end{proof}
\begin{remark}\quad
 For the cases $(m,r)= (4,2)$ or $(3,1)$, $c_Q$ is not determined in this lemma. The values will be shown in Prop.~\ref {prop:nonsing_fourier_coeff} to be $1$.
  For the excluded case $(m,r)=(3,1)$ and $S$ in the square class of
  $-\Delta_Q$, $E_{S/2}(g,s,f_\Phi)$ has a pole at $s=s_0$. For the
  excluded case $(m,r)=(2,1)$, $E_{S/2}(g,s,f_\Phi)$ vanishes at
  $s=s_0$.
\end{remark}
\begin{proposition}\quad
  \label{prop:fj_eisenstein}
  Assume $m=n+1$. Also assume $(m,r)\neq (2,0), (2,1),(3,1) $ or
  $(m,r)=(3,1)$ and $S$ not in the square class of $-\Delta_Q$.  Let
  $\phi\in \cS_0(L(\A))$ and $f_{\Phi}(s)$ be the Siegel-Weil section
  of $I(\chi,s)$ associated to $\Phi\in \cS(U^n(\A))$.  If
  $\form{\cdot}{\cdot}_Q$ does not represent $S$ then
  $R(g_1,f_{\Phi},s,\phi)=0$.  If $\form{\cdot}{\cdot}_Q$  represents $S$, assume
  \begin{equation*}
    Q=
    \begin{pmatrix}
      S/2& \\ &Q_1
    \end{pmatrix}.
  \end{equation*}
  Then
  \begin{equation*}
    \FJ^\phi_S(g_1;E(s,f_\Phi))|_{s=0} = c_Q\int_{H_1(\A)\lmod H(\A)}
    E(g_1,s,f_{\Psi(\omega(h)\Phi,\phi)}) dh|_{s=0}
  \end{equation*}
\end{proposition}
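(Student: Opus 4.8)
The plan is to combine the two preceding results: Proposition~\ref{prop:FJ-R}, which expresses the Fourier-Jacobi coefficient of the Siegel Eisenstein series as a sum over $P_1(k)\lmod G_1(k)$ of the local integrals $R_S(\gamma g_1, f_\Phi, s, \phi)$, and Lemma~\ref{lemma:SW_n=1}, which evaluates the analogous ``$z$-part'' integral in the rank-one situation. First I would take the formula of Proposition~\ref{prop:FJ-R}(1), namely
\begin{equation*}
  \FJ_S^\phi(g_1, E(f_\Phi,s)) = \sum_{\gamma\in P_1(k)\lmod G_1(k)}
  R_S(\gamma g_1, f_\Phi, s,\phi),
\end{equation*}
which is valid for $\Re s$ large, and specialise to the Siegel-Weil section $f_\Phi$ of $I(\chi,s)$. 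Since $m=n+1$ we know from Section~\ref{sec:siegel_eisenstein} that $E(g,s,f_\Phi)$ is holomorphic at $s=0$, and the excluded cases $(m,r)=(2,0),(2,1),(3,1)$ (with $S$ in the bad square class) are precisely those removed so that the hypotheses of Lemma~\ref{lemma:SW_n=1} are met in the rank-one reduction; under these hypotheses Proposition~\ref{prop:FJ-R}(3)--(4) guarantee that $R_S$ is holomorphic in a neighborhood of $s=0$, so the summands and hence the whole expression may be continued to and evaluated at $s=0$.

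The crucial computational step is to identify each term $R_S(g_1, f_\Phi, s, \phi)$, evaluated at $s=0$, with $c_Q\, E(g_1, s, f_{\Psi(\omega(h)\Phi,\phi)})|_{s=0}$ integrated over $H_1(\A)\lmod H(\A)$, at the level of the open-cell integral. Here I would unwind the definition \eqref{eq:R} of $R_S$: the inner integral over $\A$ in the variable $z$ is exactly a rank-one ``$z$-part'' integral of the type treated in Lemma~\ref{lemma:SW_n=1}, once the Siegel-Weil section $f_\Phi$ is restricted along the embedding of the rank-one metaplectic group into $\til G(\A)$ sitting inside $w_n V(0,y,z) w_{n-1} g_1$. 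Applying Lemma~\ref{lemma:SW_n=1} to this inner integral replaces it, at $s=s_0$ in the rank-one normalization (i.e.\ at $s=0$ after the shift coming from the modular character), by the constant $c_Q$ times the convergent integral \eqref{eq:fourier-coeff-theta-integral} over $H_1(\A)\lmod H(\A)$ against the appropriate Schwartz datum. Carrying the remaining integration over $L(\A)$ in the $y$-variable through the intertwining property of $\Psi$ established in Section~\ref{sec:FJ_theta_integral}, namely
\begin{equation*}
  \omega_{\psi,Q_1}(g_1)\Psi(\Phi,\phi)=\Psi(\omega_{\psi,Q}(g_1)\Phi,\omega_{\psi_S}(g_1)\phi),
\end{equation*}
turns the rank-one datum into $f_{\Psi(\omega(h)\Phi,\phi)}$, the Siegel-Weil section on $G_1$ attached to the pushed-forward Schwartz function. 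Summing over $\gamma\in P_1(k)\lmod G_1(k)$ then reassembles the terms into the lower-rank Siegel Eisenstein series $E(g_1,s,f_{\Psi(\omega(h)\Phi,\phi)})$, and since the sum over $\gamma$ and the integral over $h$ can be interchanged (after checking absolute convergence, which in the relevant range follows as in Proposition~\ref{prop:FJ-R}(2)), I arrive at the asserted identity.

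The main obstacle I anticipate is the careful matching of constants and normalizations: keeping precise track of the modular-character shift that sends the rank-one critical point $s_0=\tfrac{m}{2}-1$ to the point $s=0$ in the ambient normalization (remarked parenthetically in the proof of Lemma~\ref{lemma:SW_n=1}), and verifying that the constant $c_Q$ extracted from the $z$-integral is the same $c_Q$ appearing in the statement, independent of $S$ by the ``does not depend on $S$'' clause in Lemma~\ref{lemma:SW_n=1}. A secondary technical point is justifying the interchange of the $\gamma$-summation with the $h$-integration and the meromorphic continuation to $s=0$; here the holomorphy and convergence assertions of Proposition~\ref{prop:FJ-R}(2)--(4), together with the exclusion of the degenerate cases $(m,r)=(2,0),(2,1),(3,1)$, are exactly what make the argument go through, and the vanishing claim when $\form{\cdot}{\cdot}_Q$ does not represent $S$ is immediate from the corresponding vanishing statement for the inner integral in Lemma~\ref{lemma:SW_n=1}.
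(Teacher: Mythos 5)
Your proposal is correct and follows essentially the same route as the paper: unfold via Proposition~\ref{prop:FJ-R}, restrict the Siegel--Weil section along the rank-one embedding to apply Lemma~\ref{lemma:SW_n=1} to the $z$-integral, and then absorb the $y$-integration into the intertwining map $\Psi$ to recognise the lower-rank Siegel--Weil section. The only detail the paper records that you gloss over is the verification that the metaplectic cocycles agree under the identity $w_n v(0,0,z)=\iota\bigl(w\left(\begin{smallmatrix}1&z\\0&1\end{smallmatrix}\right)\bigr)w_{n-1}$, which is needed to legitimise the restriction step.
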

\begin{proof}
  Embed $\Sp_2 (\A)$ into $G (\A) =\Sp_{2n} (\A)$ by
  \begin{equation*}
    g_0=
    \begin{pmatrix}
      a& b\\ c&d
    \end{pmatrix} \mapsto
    \begin{pmatrix}
      a &&b&\\
      &1_{n-1}&&0_{n-1}\\
      c &&d&\\
      &0_{n-1}&&1_{n-1}
    \end{pmatrix}
  \end{equation*} and denote this embedding by $\iota$. Also denote the
  lift $\til{\Sp}_2(\A) \rightarrow \til{\Sp}_{2n} (\A)$ by $\iota$.
  Observe that
  \begin{equation*}
    w_nv(0,0,z) = \iota(w
    \begin{pmatrix}
      1 & z \\ 0 &1
    \end{pmatrix}) w_{n-1}
  \end{equation*}
and the cocycles on both sides agree.
  To simplify notation temporarily set
  \begin{equation*}
    X =w_{n-1}
    \left(\begin{array}{c|c}
        1_n &
        \begin{matrix}
          0 & y\\ \trpz{y} & 0_{n-1}
        \end{matrix}
        \\\hline
        0_n & 1_n
      \end{array}\right) w_{n-1}g_1.
  \end{equation*}
   Then as a function of $g_0\in \til{\Sp}_2(\A)$,
  \begin{equation*}
    f_\Phi(\iota(g_0) X,s)
  \end{equation*}
  is a weak Siegel-Weil section associated to
  \begin{equation*}
    u \mapsto \omega(X)\Phi(u,0),
  \end{equation*}
  which is a Schwartz function in $\cS(U(\A))$.  Since we are simply restricting, the normalisation of $s$ is different from that in Lemma~\ref{lemma:SW_n=1}. Then by Lemma~\ref{lemma:SW_n=1},
 if $\form{\ }{\ }_Q$ does not represent $S$ then
  $R(g_1,f,0,\phi)=0$.
   If $Q=\left(
    \begin{smallmatrix}
      S/2 &\\ & Q_1
    \end{smallmatrix}\right)$ then using  Lemma~\ref{lemma:SW_n=1} to integrate the $z$-part,
   we find that $R(g_1,f_\Phi,s,\phi)|_{s=0}$ is equal
  to
  \begin{align*}
    c_Q\int_{L(\A)}\int_{H_1(\A)\lmod H(\A)} \omega(X)\Phi(h^{-1}
    \begin{pmatrix}
      1 & 0\\ 0&0
    \end{pmatrix})\overline{\omega_S(g_1)\phi(-y)}dhdy.
  \end{align*}
 The action of $\omega(X)$ can be explicitly computed:
  \begin{align*}
    &\omega(X)\Phi(h^{-1}
    \begin{pmatrix}
      1 & 0\\ 0&0
    \end{pmatrix})=\omega(g_1)\Phi(h^{-1}
    \begin{pmatrix}
      1 & -y\\ 0 &0
    \end{pmatrix}).
  \end{align*}
  Changing the order of integration and taking into account the
  definition of $\Psi$, we find
  \begin{equation*}
    R(g_1,f_\Phi,s,\phi)|_{s=0}=c_Q\int_{H_1(\A)\lmod H(\A)} \omega_{Q_1} (g_1)\Psi(0,\omega_Q(h)\Phi,\phi)dh,
  \end{equation*}
where we put back subscripts to indicate which Weil representation is used. This proves the proposition.
\end{proof}
\begin{remark}\quad
  The calculation relies on Lemma~\ref{lemma:SW_n=1}.  Thus for the
  case $(m,r)=(3,1)$ we can only go down to an Eisenstein series of
  lower rank for certain $S$'s. However for the proof of the Siegel-Weil
  formula in our case, this is  enough.
\end{remark}

\section{Proof of Siegel-Weil Formula}
\label{sec:pf_main_thm}

First we summarise  some results on irreducible submodules of the local
induced representations. In particular we note that they are nonsingular in the
sense of Howe\cite{MR777342}. See statement of
Lemma~\ref{lemma:nonsingular} for the precise definition.
We will fix one place
$v$ and let only $\Phi_v$ vary in the difference $A(g,\Phi)=
E(g,s,f_\Phi)|_{s=0} - 2 c_Q I_\REG(g,\Phi)$, where $c_Q$ is as in Lemma~\ref{lemma:SW_n=1}. It will be shown in the proof of Prop.~\ref{prop:A_B=0} that in fact those undetermined $c_Q$'s are all $1$.  We may interpret $A(g,\Phi)$
as a $\til{G} (k_v)$-intertwining operator from such an irreducible
nonsingular submodule to the space of automorphic forms. This helps deal with the $B$-th
Fourier coefficients of $A$ for $B$ not of full rank.

Fix $v$ a finite place of $k$ and suppress it from notation. We consider the local case.
Let $R_n(U)$ denote the image of the map
\begin{align*}
  \cS(U^n)&\rightarrow  I(\chi_\psi\chi,s_0)\\
  \Phi &\mapsto \omega(g)\Phi(0).
\end{align*}
This map induces an isomorphism $\cS(U^n)_H \isom R_n(U)$ by
\cite{MR743016}.  Let $U'$ be the quadratic space
with the same dimension and determinant as $U$ but with opposition
Hasse invariant. We form also $R_n(U')$. If no such $U'$ exists for reason of small dimension we just set $R_n(U')$ to $0$ . 
\begin{lemma} \quad
  Assume $m=n+1$. The $\til{G} (k_v)$-modules $R_n(U)$ and $R_n(U')$ are
  irreducible and unitarisable. Moreover $I(\chi_\psi\chi,s_0)\isom R_n(U)\oplus R_n(U')$,
\end{lemma}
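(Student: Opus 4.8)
The plan is to separate the three claims, using crucially that $v$ is a fixed finite place (so the $p$-adic structure theory of the Siegel degenerate principal series is at hand) and that $m=n+1$ forces $s_0=0$, the centre of the functional equation.

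\emph{Unitarisability} I would settle first, as it is essentially formal. Since $s_0=0$, the inducing character $\chi_\psi\chi\,|\ |^{s_0}=\chi_\psi\chi$ is unitary---$\chi$ is a quadratic character and $\chi_\psi$ is the genuine character built from the Weil index $\gamma_v$---so $I(\chi_\psi\chi,s_0)$ is \emph{unitarily} induced, hence a completely reducible unitary representation of $\til G(k_v)$. The map $\Phi\mapsto(g\mapsto\omega(g)\Phi(0))$ is $\til G(k_v)$-equivariant, so $R_n(U)$ and $R_n(U')$ are genuine submodules and are therefore automatically unitarisable; complete reducibility of the ambient module will also be used below.

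\emph{Irreducibility and the splitting.} By \cite{MR743016} the above map factors through the $\rO(U)$-coinvariants and induces $\cS(U^n)_H\isom R_n(U)$, so $R_n(U)$ is the full theta lift of the trivial representation of $H=\rO(U)$; the same applies to $R_n(U')$. I would then invoke the Kudla--Rallis analysis of the constituents of the Siegel degenerate principal series (and its metaplectic counterpart for odd $m$): at the boundary point $m=n+1$ each such theta submodule is irreducible, and when $U'$ exists the module $I(\chi_\psi\chi,s_0)$ has exactly these two constituents. To see that the two submodules are \emph{distinct}, and hence---by complete reducibility---disjoint, I would compute the normalised standard intertwining operator $M(s_0)$ at the symmetry point $s=0$. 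Kudla--Rallis express $M(s)f_\Phi$ on a Siegel--Weil section again as a Siegel--Weil section up to a local factor; at $s_0=0$ and $m=n+1$ this factor degenerates to a sign determined by the Hasse invariant of the quadratic space, so $M(s_0)$ is, up to a scalar, an involution acting by opposite signs on $R_n(U)$ and on $R_n(U')$. Both submodules are nonzero (a Siegel--Weil section $f_\Phi$ with $\Phi(0)\neq 0$ is itself nonzero) and lie in opposite eigenspaces of this involution, so they are distinct irreducible submodules of the semisimple module $I(\chi_\psi\chi,s_0)$; as they exhaust its two constituents, $I(\chi_\psi\chi,s_0)=R_n(U)\oplus R_n(U')$ follows. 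When $U'$ does not exist, the same computation makes $M(s_0)$ a single scalar and $I(\chi_\psi\chi,s_0)$ irreducible, which is consistent with the convention $R_n(U')=0$.

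\emph{Main obstacle.} I expect the real difficulty to be the metaplectic intertwining-operator computation for odd $m$. Separating the two Witt towers requires the precise sign by which the normalised $M(s_0)$ acts on each $R_n(U)$, and this is exactly where the $\Mp_{2n}$ theory departs from the symplectic one, the Weil-index factors $\gamma_v$ entering the normalisation. Establishing that these signs are opposite for $U$ and $U'$---and that $I(\chi_\psi\chi,s_0)$ has length two with each $R_n(U)$ irreducible---is the crux; by contrast the unitarisability and the reduction to \cite{MR743016} are routine.
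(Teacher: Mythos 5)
Your proposal is correct and in substance matches the paper, which for this lemma simply defers to the structure theory of the degenerate principal series at $s_0=0$ (Kudla--Rallis \cite{MR1194967} for the symplectic case, Zorn \cite{MR2843101} for the metaplectic case, as summarised in \cite{MR3166215}). Your added observations --- unitarisability from unitary induction at the centre of symmetry, and separation of the two Witt towers by the sign of the normalised intertwining operator at $s=0$ --- are exactly the mechanisms underlying those cited results, though like the paper you still invoke rather than reprove the irreducibility of each $R_n(U)$ and the exhaustion of constituents.
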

We refer the reader to \cite [Cor.~3.7] {MR1194967} for the symplectic case and \cite [Cor.~4.14] {MR2843101} for the metaplectic case. 
A summary of decomposition of principal degenerate series is given in \cite [Prop.~7.2] {MR3166215}. 

\begin {lemma} \emph {\cite[Prop
    3.2(ii)]{MR961164}}
  \label{lemma:nonsingular}
   Assume $m= n+1$. Then $R_n(U)$ is a
    nonsingular representation of $\til{G} (k_v)$ in the sense of
    Howe\cite{MR777342}, i.e., there exists a
    Schwartz function $f$ on $\Sym_n(F_v)$ such that its Fourier
    transform $\hat{f}$ vanishes on all singular matrices in
    $\Sym_n(F_v)$ and such that the action of $f$ on $R_n(U)$ is not
    the zero action.
\end{lemma}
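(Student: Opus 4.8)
The plan is to make Howe's criterion completely explicit on the Schr\"odinger model $\cS(U^n)$, by computing the action of $\cS(\Sym_n(F_v))$ through the abelian unipotent radical $N\cong\Sym_n$, and then to exploit the inequality $m=n+1\ge n$, which guarantees that the Gram map $X\mapsto\langle X,X\rangle_{Q_v}$ actually reaches nonsingular symmetric matrices.

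First I would record the $N$-action. By \eqref{eq:weil_rep_nilp} the element $(n(B),1)$ acts on $\cS(U^n)$ by multiplication by $\psi_v(\tfrac12\tr(\langle X,X\rangle_{Q_v}B))$. Since the map $\Phi\mapsto\phi_\Phi$, with $\phi_\Phi(g)=\omega(g)\Phi(0)$, is $\til G(k_v)$-intertwining onto $R_n(U)$ (it realises the isomorphism $\cS(U^n)_H\isom R_n(U)$), the identity $\phi_\Phi(g\,n(B))=\phi_{\omega(n(B))\Phi}(g)$ shows that the same formula governs the $N$-action on $R_n(U)$. Consequently, for $f\in\cS(\Sym_n(F_v))$ the operator $\pi(f)=\int_{\Sym_n}f(B)\,\pi(n(B))\,dB$ acts on $\cS(U^n)$ as multiplication by $\hat f(\tfrac12\langle X,X\rangle_{Q_v})$, where $\hat f(Y)=\int f(B)\psi_v(\tr(YB))\,dB$ is the Fourier transform on $\Sym_n(F_v)$. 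This multiplier is invariant under $\Phi(X)\mapsto\Phi(h^{-1}X)$, so $\pi(f)$ descends to $R_n(U)$, where it sends $\phi_\Phi$ to $\phi_{\pi(f)\Phi}$.

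Next I would choose the data. Because $\dim U=n+1\ge n$ and $U$ is nondegenerate, there is an $n$-frame $X_0=(X_1,\dots,X_n)$ spanning a nondegenerate $n$-dimensional subspace, so that $Y_0:=\langle X_0,X_0\rangle_{Q_v}$ is nonsingular. I would then take $\hat f$ to be a non-negative bump function (the characteristic function of a small ball when $v$ is finite) supported in the open set $\{\det\neq0\}\subset\Sym_n(F_v)$ and not vanishing near $\tfrac12 Y_0$; then $f$ is Schwartz and $\hat f$ vanishes on every singular matrix, so the first requirement in the definition of nonsingularity holds automatically. Finally I would verify $\pi(f)\neq0$ on $R_n(U)$ by testing the resulting section at the Weyl element: taking $\Phi\ge0$ a bump supported near $X_0$, the formula for $\omega_v(w_n^{-1})$ shows that $\phi_{\pi(f)\Phi}(w_n^{-1})$ equals a nonzero constant (a root of unity times a power of $\gamma_v(\psi_v\circ Q_v)$) times $\int_{U^n}\hat f(\tfrac12\langle Y,Y\rangle_{Q_v})\Phi(Y)\,dY$, and this integral of a non-negative integrand is strictly positive once the supports of $\Phi$ and of $\hat f\circ(\tfrac12\langle\cdot,\cdot\rangle_{Q_v})$ overlap. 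Hence $\phi_{\pi(f)\Phi}\neq0$, so $\pi(f)$ is not the zero operator on $R_n(U)$.

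The only genuine obstacle is this last step: one must see that $\pi(f)$ is nonzero on the submodule $R_n(U)$ and not merely on $\cS(U^n)$, since passage to the $H$-coinvariants could a priori annihilate the test vector. Evaluation at $w_n^{-1}$ circumvents this by producing an honest nonvanishing function on $\til G(k_v)$, and the positivity of the resulting integral is exactly where the hypothesis $m=n+1$ (hence $m\ge n$) enters, guaranteeing full-rank Gram matrices in the image of the quadratic map; had $m<n$, the multiplier $\hat f(\tfrac12\langle X,X\rangle_{Q_v})$ would vanish identically for $\hat f$ supported away from singular matrices, and the representation would fail to be nonsingular.
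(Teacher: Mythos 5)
Your argument is correct and complete: the three points that matter are all there, namely the multiplier computation $\omega(f)\Phi=\hat f(\tfrac12\langle X,X\rangle_{Q_v})\Phi$ for the $N$-action, the existence of an $n$-frame with nonsingular Gram matrix (which is exactly where $m=n+1\ge n$ enters), and the evaluation of the resulting section at $w_n^{-1}$ to rule out the possibility that $\omega(f)\Phi$ dies in the $H$-coinvariants $\cS(U^n)_H\isom R_n(U)$. The paper itself gives no proof of this lemma --- it defers entirely to the citation of Kudla--Rallis --- and your argument is essentially the standard one found in that reference, so there is nothing to add.
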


Combining the results above we are ready to show the Siegel-Weil
formula. Now we are back in the global case. Note the assumption that $m=n+1$.
Set $A(g,\Phi)= E(g,s,f_\Phi)|_{s=0} - 2c_Q I_\REG(g,\Phi)$, where $c_Q$ is as in Lemma~\ref{lemma:SW_n=1}.
\begin{proposition}\label{prop:nonsing_fourier_coeff}\quad
 Assume $m=n+1$. Assume that $U$ is not the split binary space. Then for $B
    \in \Sym_{n}(k)$ with rank $n$,
 the Fourier coefficients $ A_B=0$.
\end{proposition}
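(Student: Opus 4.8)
The plan is to prove the vanishing of the full-rank coefficient $A_B$ by descending to the lower-rank, even-dimensional Siegel--Weil formula through Fourier--Jacobi coefficients, with Lemma~\ref{lemma:fj2fourier} as the bridge. First I would use the $\GL_n(k)$-equivariance of Fourier coefficients — conjugation by $m(\gamma)$ carries $A_B$ to $A_{\trpz{\gamma}B\gamma}$ — to replace the nonsingular $B$ by an orthogonally split representative $\smatrix{S/2}{}{}{\beta}$ with $S\in k^\times$ and $\beta\in\Sym_{n-1}(k)$ nonsingular; this is possible because any anisotropic vector $v$ with $B(v)=S/2\ne0$ has a nondegenerate orthogonal complement. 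By Lemma~\ref{lemma:fj2fourier} it then suffices to prove $\FJ^\phi_{S,\beta}(g_1;\rho(f)A)=0$ for every $\phi\in\cS_0(L(\A))$ and every $f$ in the Hecke algebra; since the Fourier--Jacobi construction is compatible with the right $\til G(\A)$-action, it is enough to treat $A$ itself.

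Next I would feed the two pieces of $A=E(\cdot,s,f_\Phi)|_{s=0}-2c_Q I_\REG(\cdot,\Phi)$ through Proposition~\ref{prop:fj_theta} and Proposition~\ref{prop:fj_eisenstein}. When $Q$ represents $S/2$ these combine, after taking the $\beta$-th coefficient, into
\begin{equation*}
  \FJ^\phi_{S,\beta}(g_1;A)=c_Q\int_{H_1(\A)\lmod H(\A)}\left(E_\beta(g_1,s,f_{\Psi(\omega(h)\Phi,\phi)})|_{s=0}-2 I_{\REG,\beta}(g_1,\Psi(\omega(h)\Phi,\phi))\right)dh,
\end{equation*}
where both terms in the integrand now refer to the lower pair $\til G_1(\A)\times H_1(\A)$ with $\dim U_1=m-1$, and the single scalar $c_Q$ is the constant produced by the $z$-integration of Proposition~\ref{prop:fj_eisenstein}, which is exactly the constant appearing in the definition of $A$. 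The integrand is the $\beta$-th Fourier coefficient of the lower-rank difference $E(\cdot,s,f_\Psi)|_{s=0}-2I_\REG(\cdot,\Psi)$; since $\dim U_1=m-1$ is even, the boundary Siegel--Weil formula for $U_1$ is already available (Kudla--Rallis~\cite{MR1289491}, together with the $n=1$ computation of Lemma~\ref{lemma:SW_n=1} when $\dim U_1=2$), with the matching constant, so its $\beta$-th coefficient vanishes. Hence $\FJ^\phi_{S,\beta}(g_1;A)=0$; the same holds for $\rho(f)A$, and Lemma~\ref{lemma:fj2fourier} yields $A_B=0$. If instead $Q$ does not represent $S/2$, both Fourier--Jacobi coefficients vanish outright by Propositions~\ref{prop:fj_theta} and~\ref{prop:fj_eisenstein}, and the conclusion is the same.

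The step that needs genuine care — and the main obstacle — is that Proposition~\ref{prop:fj_eisenstein} performs the descent only under its hypotheses: for every $(m,r)$ other than $(3,1)$ any splitting value $S$ is admissible, but when $(m,r)=(3,1)$ the degenerate Whittaker integral \eqref{eq:SW_n=1} acquires a pole unless $S$ lies outside the square class of $-\Delta_Q$. I must therefore show that every nonsingular $B\in\Sym_2(k)$ represents some such good value $S/2$. This follows because a nonsingular binary quadratic form over a number field represents infinitely many distinct square classes, so it cannot be trapped inside the single bad class; moreover, for a good $S$ the complement $U_1$ is anisotropic rather than split binary, so the lower-rank formula invoked above does apply. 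This is precisely the extra range secured by the refinement in Proposition~\ref{prop:FJ-R}(4) and the $(3,1)$-clause of Lemma~\ref{lemma:SW_n=1}, and it is what lets the argument close even though, for $(3,1)$, not every Fourier--Jacobi coefficient descends to a lower-rank object.

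Finally there is the bookkeeping of the constant $c_Q$, left undetermined by Lemma~\ref{lemma:SW_n=1} for $(m,r)=(3,1)$ and $(4,2)$. In the argument above $c_Q$ enters only as the common scalar in front of a vanishing integral, so $A_B=0$ holds whatever its (fixed) value; to identify it with $1$ I would run the same Fourier--Jacobi descent on the even case $(4,2)$, where the Siegel--Weil identity is already known, so that the forced vanishing of its Fourier--Jacobi coefficient turns the reduction formula into a nonzero multiple of $(c_Q-1)$ attached to the $(3,1)$ datum and pins that factor to zero. I expect the two delicate points to be exactly this reconciliation of constants and the verification, in the $(3,1)$ case, that a good splitting value $S$ can always be extracted from an arbitrary nonsingular $B$.
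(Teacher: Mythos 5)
Your main line of argument is the paper's: reduce $B$ to $\smatrix{S/2}{}{}{\beta}$, pass to Fourier--Jacobi coefficients via Lemma~\ref{lemma:fj2fourier}, descend through Propositions~\ref{prop:fj_theta} and~\ref{prop:fj_eisenstein} to the boundary Siegel--Weil identity for the even-dimensional space $U_1$, and, when $(m,r)=(3,1)$, first move $S$ out of the bad square class. Your existence argument for a good $S$ --- a nonsingular binary form over a number field represents more than one square class --- is a clean substitute for the paper's explicit computation with $x^2+y^2$ and Serre's local-global criterion, and your remark that a good $S$ is precisely what makes $U_1$ anisotropic rather than split binary is the right point. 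You are also correct that $c_Q$ factors out of the descended integral, so that $A_B=0$ holds whatever its value.

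The genuine gap is in your determination of $c_Q=1$ for $(3,1)$. Descending from the known identity for the split four-dimensional space does \emph{not} produce ``a nonzero multiple of $(c_Q-1)$'': applying Proposition~\ref{prop:fj_eisenstein} with the $(4,2)$ space as the ambient $U$ brings in the constant of Lemma~\ref{lemma:SW_n=1} for $(4,2)$, which that lemma \emph{also} leaves undetermined. What your descent yields is the product relation $c_Q^{(4,2)}\cdot c_Q^{(3,1)}=1$ (granting non-vanishing of the relevant orbital integral) --- one equation in two unknowns --- and a further descent from dimension five does not help, since it only ever invokes the already-known constants of the five-dimensional spaces and reproduces the $n=3$, $m=4$ identity rather than the $n=1$ constant. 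The paper closes this differently and self-containedly: having shown that $E(g,s,f_\Phi)|_{s=0}$ and $I_{\REG}(g,\Phi)$ are proportional for $(3,1)$, it compares constant terms, writing the constant term of the Eisenstein series as $\sum_k E_k$ and that of $I_\REG$ as $\sum_k I_k$, sorting the terms by their central characters under $m(zI_n)$, using that the intertwining operator $M(0)$ acts as $+1$ on the coherent part to get $E_0=E_n$, and observing $I_0(g,\Phi)=f_\Phi(g,0)=E_0$, so that $2I_0=E_0+E_n$ forces the proportionality constant to be $1$. You would need either to import this constant-term comparison or to supply an independent evaluation of the $(4,2)$ constant before your descent can pin down $c_Q^{(3,1)}$.
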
\label{prop:A_B=0}
\begin{proof}
  The matrix $B$ is always congruent to  
  \begin{equation}\label{eq:B}
    \begin{pmatrix}
    S/2 & \\ & \beta
    \end{pmatrix}
  \end{equation}
for some $S\in k^\times$ and $\beta \in \Sym_{n-1} (k)$ nonsingular. We may just consider $B$-th Fourier coefficients for $B$ of the form \eqref{eq:B}. 
For $(m,r)= (3,1)$, we claim that we may further assume that $S$ is not in the square class of $-\Delta_Q$. In this case $B$ is a $2\times 2$-matrix. If $B=\left (\begin{smallmatrix}
    b_1 & \\ & b_2
    \end{smallmatrix}\right)$
with $b_1$ not  in the square class of $-2\Delta_Q$, then we are done already.
If $b_1 \equiv -2\Delta_Q$ and $b_2 \not\equiv -2\Delta_Q$, then 
\begin{equation*}
  \begin{pmatrix}
    0&1\\1&0
  \end{pmatrix}
\begin{pmatrix}
    b_1 & \\ & b_2
    \end{pmatrix}
\begin{pmatrix}
    0&1\\1&0
  \end{pmatrix}
\end{equation*}
has the desired property. Thus we just need to show that the quadratic form 
\begin{equation*}
  q (x) = \trpz { x} \begin{pmatrix}
      -2\Delta_Q & \\ & -2\Delta_Q
    \end{pmatrix} x
\end{equation*}
represents an element in $k^\times$ which is not in the square class of $-2\Delta_Q$. In other words we need to show that 
the quadratic form 
\begin{equation*}
  q (x) = \trpz { x} \begin{pmatrix}
      1 & \\ & 1
    \end{pmatrix} x
\end{equation*}
represents a non-square in $k^\times$. By \cite{MR0344216}, if we  take $a\in k^\times - ( k^\times)^2$ which is positive under each  embedding of $k$ into $\R$ and such that $(-1, a)_v =1$ for all $v\nmid \infty$, then $a$ is represented by $q$. Taking $a$ to be  a norm in $k [\sqrt {-1}]$ which is not a square, we have proved our claim.

Then we proceed to compute $B$-th Fourier coefficients for $B$ of the form \eqref{eq:B} and when $(m,r)= (3,1)$ we assume that $S$ is not in the square class of $-2\Delta_Q$.

 First we prove the anisotropic
  case $r=0$.   The base case $m=2$ and $n=1$ was proved in \cite[Chapter
  4]{MR887329}.  Now for $m=n+1$, we take $\psi_S$-th Fourier-Jacobi coefficient of $A (g,\Phi)$ and make use of Lemma~\ref{lemma:fj2fourier}. If $\form{\ }{\ }_Q$ does not
  represent $S$ then by Prop. \ref{prop:fj_theta} and
  Prop. \ref{prop:fj_eisenstein}  obviously $\FJ^\phi_{ S,\beta} (g_1, A (\cdot,\Phi)) = 0$ and hence $A_B=0$. If $\form{\ }{\ }_Q$
  represents $S$ then we can just assume that $Q=\left(
    \begin{smallmatrix}
      S/2 & \\ & Q_1
    \end{smallmatrix}\right)$. Note that $Q_1$ is still
  anisotropic.  Again by Prop. \ref{prop:fj_theta} and
  Prop. \ref{prop:fj_eisenstein} and the induction hypothesis we
  conclude that $A_B$=0.

  Secondly we assume $Q$ to be isotropic and $m\ge 5$, so $\form{\ }{\ }_Q$
  represents $S$. We can just assume that $Q=\left(
    \begin{smallmatrix}
      S/2 & \\ & Q_1
    \end{smallmatrix}\right)$.
  From Section \ref{sec:fj-coeff} we get by Prop. \ref{prop:fj_theta}
  and Prop. \ref{prop:fj_eisenstein} and the $m$ even case
  $\FJ_{S,\beta}^{\phi}(A)=0$ for all $\phi\in\cS(L(\A))$ if the rank of
  $\beta$ is $n-1$. Then by Lemma \ref{lemma:fj2fourier}, $A_B$
  vanishes for $B\in \Sym^n(k)$ such that $\det B \neq 0$.

The case where $m=4$ belongs to the even case and we know that the constant $c_Q$ must be $1$ for $m=4$ from the even case of Siegel-Weil formula.  Finally assume that $Q$ is isotropic and $m=3$. By the assumption on
  $S$ we can take $\psi_S$-th Fourier-Jacobi coefficients and still use Prop.~\ref{prop:fj_eisenstein} to do
  induction. Notice then $Q_1$ must be anisotropic and we are reduced
  to the case $(m,r)=(2,0)$. For now we have shown that for $(m,r)= (3,1)$, $ E(g,s,f_\Phi)|_{s=0}$ and $I_\REG(g,\Phi)$ are proportional. The constant of proportionality can be computed by comparing constant terms of both expressions. The constant term of $E(g,s,f_\Phi)|_{s=0}$ is given in \cite [Lemma~2.4] {MR946349}
  \begin{equation*}
    \sum_{k=0}^n \sum_{a\in Q_{n-k}\lmod \GL (n)} \int_{N_k' (\A)} f (w_{n-k}n' m (a)g,s )dn'.
  \end{equation*}
Here $Q_{n-k}$ is the parabolic subgroup of $\GL (n)$ whose Levi has blocks of sizes $k$ and $n-k$; $N_k'$ consists of unipotent elements of the form $n (b)$ with
\begin{equation*}
  b=
  \begin{pmatrix}
    0_k &0\\0&b_0
  \end{pmatrix}
\end{equation*}
where $b_0$ is of size $(n-k)\times (n-k)$; $w_{n-k}$ is the Weyl element
\begin{equation*}
  \begin{pmatrix}
    1_k & 0&0&0\\
    0&0&0&1_{n-k}\\
    0&0&1_k&0\\
    0&-1_{n-k}&0&0
  \end{pmatrix}.
\end{equation*}
Since the metaplectic group splits over $N (\A)$, the computation for symplectic group carries through. Denote the $k$-th term by $E_k (g,s,f)$ and restrict it to the subgroup $m (\GL (n,\A))$, so it becomes a function on $\GL (n,\A)$. We find that
\begin{equation*}
  E_k (( m (zI_n),\zeta)g,s,f) = \chi\chi_\psi (z^{2k-n},\zeta)|z|_\A^{(2k-n)s + n (n+1)/2 + k^2 - nk}E_k (g,s,f).
\end{equation*}
Thus $E_0$ and $E_n$ have the same central character and for $k\neq 0 ,n$, $E_k$  has a different central character. Note that $E_0 (g,s,f)=f (g,s)$, $E_n (g,s,f) = M (s)f (g,s)$ and the intertwining operator $M (0)$ acts as $1$ on the coherent part of $\Ind^{\til{G}(\A)}_{\til{P}(\A)} \chi\chi_\psi$ (c.f. \cite [Lemma~6.3] {MR3279536}), so in fact $E_0=E_n$. Next we consider the constant term of $I_\REG (g,\Phi)$. This was computed in Sec.~6 of \cite{MR961164}. We get
\begin{equation*}
  c_\alpha^{-1}\sum_{k=0}^{\min (r,n)} \sum_{\substack{x\in U^n (F) \\ \rank (x)=k, \form{x}{x}_Q = 0} }\int_{[H]} \omega ( g,h)\omega (\alpha)\Phi (x)dh.
\end{equation*}
Denote the $k$-th term by $I_k (g,\Phi)$. We have
\begin{equation*}
  I_k (( m (zI_n),\zeta)g,\Phi) = \chi\chi_\psi (z^n,\zeta) |z|_\A^{mn/2 - (m-k-1)k} I_k (g,\Phi).
\end{equation*}
For the case at hand with $(m,r)= (3,1)$ and $n=2$, there are only two terms $I_0$ and $I_1$. They have distinct `central characters'. Also note that in fact $I_0 (g,\Phi) = \omega (g,1)\Phi (0) = f_\Phi (g,0)$. When $s=0$, $I_0$, $E_0$ and $E_n$ have the same `central character'. From the equality $2I_0 = E_0 +E_n$, we conclude that
our $c_Q$ must be $1$.
\end{proof}
\begin{remark}\quad
  For the split binary case please refer to
  \cite{MR1289491} and note that the
  Eisenstein series vanishes at $0$, so the Siegel-Weil formula takes
  a different form.
\end{remark}
\begin{proof}[Proof of Theorem \ref{thm:Siegel-Weil}]
  Fix a finite place $v$ of $k$ and fix for each place $w$ not equal
  to $v$ a $\Phi_w^0\in \cS_0(U^n(k_w))$. Consider the map $A_v$ which
  sends $\Phi_v\in \cS_0(U^n (k_v))$ to $A(g,\Phi_v\otimes (\otimes_{w\neq
                                                                  v} \Phi_w^0))$. By invariant distribution theorem $R_n(U (k_v))\isom
  \cS(U^n (k_v))_{H_v}$. Thus $A_v$ defines a $\til{G} (k_v)$-intertwining operator
  \begin{equation*}
    \cS(U^n (k_v))\rightarrow \c{A}(G)
  \end{equation*}
  which actually factors through $R_n(U (k_v))$.

  As in Lemma
  \ref{lemma:nonsingular} we can find $f\in \cS(\Sym^n(k_v))$ such
  that its Fourier transform is supported on nonsingular symmetric
  matrices and $f$ does not act by zero. Then for all $g\in \til { G}(\A)$
  with $g_v=1$ and all $B\in \Sym_n(k)$ we have
  \begin{align*}
    (\rho(f).A(\Phi))_B(g)&= \int_{[\Sym_n]}
    \int_{\Sym_n(k_v)}f(c)A(\Phi) (ngn(c))\psi(-\tr(Bb))dc db\\
    &=\int_{[\Sym_n]}
    \int_{\Sym_n(k_v)}f(c)A(\Phi) (nn(c)g)\psi(-\tr(Bb))dc db\\
    &=\int_{[\Sym_n]}
    \int_{\Sym_n(k_v)}f(c)A(\Phi) (ng)\psi(-\tr(B(b-c)))dc db\\
    &=\hat{f}(B) A(\Phi)_B(g).
  \end{align*}
If $\rk B<n$,  the above is  $0$, since $\hat{f}(B)=0$. If $\rk
  B=n$, the above is again $0$, since by
  Prop.~\ref{prop:nonsing_fourier_coeff}, $A(\Phi)_B\equiv 0$. Thus $\rho(f) A(\Phi) =0$ as $G(k)\prod_{w\neq v} G ( k_w)$ is
  dense in $G(\A)$. Since $f$ does not act by zero and $R_n(U_v)$ is
  irreducible we find that in fact $A(\Phi)=0$.
This concludes the proof.
\end{proof}

\section{Rallis Inner Product Formula}
\label{sec:inner_product}
We will apply the regularised Siegel-Weil formula (Theorem
\ref{thm:Siegel-Weil}) to show the critical case of Rallis Inner Product formula
via the doubling method. We will also deduce the location of poles of
Langlands $L$-function from information on the theta lifting.

Let $G= \Sp_{2n}$ be the symplectic group of rank $n$. Let
$H=\r{O}(U,Q)$ with $(U,Q)$ a quadratic space of dimension $2n+1$ and
$\chi$ the character associated to $Q$ as in
\eqref{eq:chi-U-v}. Let $\pi$ be a genuine irreducible
cuspidal automorphic representation of $\til{G} (\A)$.  For $f\in\pi$
and $\Phi\in \cS(U^n(\A))$ define the theta lift of $f$ to $H$:
\begin{equation*}
  \Theta(h;f,\Phi)=\int_{[\til { G}]}f(g)\Theta(g,h;\Phi)dg.
\end{equation*}
To save space we will use the notation $[\til { G} ]$ to denote $G (k) \lmod   \til {G} (\A)$.
Note that as $f (g)$ and $\Theta(g,h;\Phi)$ are both genuine in $g$ the product can be viewed as a function on $G (\A)$.

Now consider $G$ to be the group of isometry of the
$2n$-dimensional space $W$ with symplectic form $\form {\ } {\ }_W$.  We will sometimes write $G (W)$ for the symplectic group. Let $W'$ be the symplectic space such that the underlying vector space is still $W$ and such that the symplectic form is $-1$ times  that of $W$. Then $W^\square=W\oplus W'$ is endowed with
the symplectic form $\form {\ } {\ }_{W^\square}$ such that
\begin{equation*}
  \form {(w_1, w_2)} {(w_1',w_2')}_{W^\square} = \form {w_1} {w_1'}_W + \form {w_2} {w_2'}_{W'} = \form {w_1} {w_1'}_W - \form {w_2} {w_2'}_W. 
\end{equation*}
Let $G^\square$ denote the `doubled' group $G (W^\square)$, which is the  symplectic group of rank $2n$. Let $\til{G}^\square (\A)$ be the corresponding metaplectic group.  If we fix a symplectic basis for $W$, the induced basis of $W'$ is not directly a symplectic basis. The isomorphism between the two symplectic groups $G (W)$ and $G (W')$ is given by the conjugation by an element in the similitude symplectic group with similitude $-1$.  For $g\in G (\A)$ set
\begin{equation*}
  \nu (g) =
  \begin{pmatrix}
    1_n &\\ & -1_n
  \end{pmatrix}
  g\begin{pmatrix}
    1_n &\\ & -1_n
  \end{pmatrix}.
\end{equation*}
The conjugation $\nu$ extends uniquely to an automorphism of $\til{G} (\A)$
\begin{equation*}
  \nu ( (g,\zeta)) = (\nu (g), \epsilon (g)\zeta)
\end{equation*}
where $\epsilon (g) =\pm 1$. More precisely, it is determined as follows:
\begin{align*}
  \epsilon (m (A)) &= \chi_{-1} (\det A)\\
  \epsilon (n (B)) &= 1\\
  \epsilon (w_n^{-1}) &=1.
\end{align*}
We have the natural embedding 
\begin{align*}
  \iota: G  (\A) \times G  (\A) &\rightarrow G^\square (\A) \\
  ( g_1, g_2) &\mapsto \begin{pmatrix}
    a_1 & & b_1&\\
    &a_2&&-b_2\\
    c_1&&d_1& \\
    & -c_2&&d_2
  \end{pmatrix}
\end{align*}
meaning that the first copy of $G (\A)$ is mapped to the subgroup $G (W) (\A)$ of $ G^\square (\A)$ and the second copy is mapped to the subgroup $G (W') (\A)$ with identification given as above.
The embedding $\iota$  lifts to a homomorphism
\begin{align*}
  \til {\iota}: \til{G}(\A)\times \til{G}(\A)&\rightarrow \til{G}^\square(\A)\\
((g_1,\zeta_1),(g_2,\zeta_2))&\mapsto (\begin{pmatrix}
    a_1 & & b_1&\\
    &a_2&&-b_2\\
    c_1&&d_1& \\
    & -c_2&&d_2
  \end{pmatrix},\epsilon (g_2)\zeta_1\zeta_2).
\end{align*}
With this we find $\Theta(\til {\iota}(g_1,g_2),h;\Phi)=
\Theta(g_1,h;\Phi_1)\overline{\Theta(g_2,h;\Phi_2)}$ for  $g_i \in \til {G} (\A)$ if we set
$\Phi=\Phi_1\otimes \overline{\Phi}_2$ for $\Phi_i\in \cS(U^n(\A))$.

Consider the inner product between two theta lifts. Let $f_i\in \pi$ and $\Phi_i \in \cS(U^n(\A))$. 
Suppose the inner product
\begin{equation*}
  \langle \Theta(f_1,\Phi_1),
  \Theta(f_2,\Phi_2)\rangle=\int_{[H]}
  \int_{ [\til { G} \times \til {G} ]}
  f_1(g_1)\Theta(g_1,h;\Phi_1) 
  \overline{f_2(g_2)\Theta(g_2,h;\Phi_2)} dg_1dg_2dh
\end{equation*}
is absolutely convergent. Then we exchange order of integration to get
\begin{align*}
  &\int_{ [\til { G} \times \til {G}]}
  f_1(g_1)\overline{f_2(g_2)} \left( \int_{[H]}\Theta(g_1,h;\Phi_1)
    \overline{\Theta(g_2,h;\Phi_2)}dh \right)dg_1
  dg_2\\
  =&\int_{[\til { G} \times \til {G}]}  f_1(g_1)\overline{f_2(g_2)}\left( \int_{[H]}\Theta(\til {\iota}(g_1,g_2),h;\Phi)\right)dg_1 dg_2\\
  =&\int_{[\til { G} \times \til {G}]}
  f_1(g_1)\overline{f_2(g_2)} I(\til {\iota}(g_1,g_2);\Phi) dg_1 dg_2.
\end{align*}
Thus we define the regularised inner product by
\begin{equation}
  \label{eq:reg_inner_product}
  \langle \Theta(f_1,\Phi_1),
  \Theta(f_2,\Phi_2)\rangle_\REG
  =\int_{[\til { G} \times \til {G}]}
  f_1(g_1)\overline{f_2(g_2)}I_{\REG}(\til {\iota}(g_1,g_2);\Phi)dg_1 dg_2
\end{equation}
in case the usual inner product diverges.

Let $W=X\oplus Y$ be a polarisation of $W$. Then the
Weil representation $\omega$ considered up till now is in fact
realised on $\cS(U\otimes X(\A))$. Thus $\Phi$ lies in $\cS(U\otimes ( X\oplus X)(\A))$.
We could apply the Siegel-Weil formula now, but then we would not be
able to use the basic identity in
\cite{MR892097} directly. Thus we proceed as follows. The space $U\otimes W^\square$ has two
complete polarisations
\begin{align*}
  U\otimes W^\square&=(U\otimes (X\oplus
X))\oplus(U\otimes (Y\oplus Y));\\
U\otimes W^\square&=(U\otimes
 W^\Delta)\oplus (U\otimes W^\nabla)
\end{align*}
 where $W^\Delta=\{(w,w)|w\in W\}$ and
$W^\nabla=\{(w,-w)|w\in W\}$. Let $P^\square$ be the Siegel parabolic of
$G^\square$ fixing the maximal isotropic subspace $W^\nabla$. There is an
isometry given by Fourier transform
\begin{equation*}
  \delta: \cS((U\otimes (X\oplus X))(\A)) \rightarrow \cS((U\otimes W^\Delta)(\A))
\end{equation*}
intertwining the action of $\til{G}^\square(\A)$.

Then \eqref{eq:reg_inner_product} is equal to
\begin{equation*}
  \int_{[\til {G}\times \til {G}]}
  f_1(g_1)\overline{f_2(g_2)}I_{\REG}(\til {\iota}(g_1,g_2);\delta\Phi)dg_1 dg_2.
\end{equation*}
Note that here the theta function implicit in $I_{\REG}$ is associated to
the Weil representation realised on $\cS(U\otimes W^\Delta(\A))$. Now we
apply the regularised Siegel-Weil formula to get
\begin{equation*}
  2^{-1}\int_{[\til {G}\times \til {G}]}
  f_1(g_1)\overline{f_2(g_2)}E(\til {\iota}(g_1,g_2),s,F_{\delta\Phi})|_{s=0}dg_1 dg_2
\end{equation*}
where to avoid conflict of notation we use $F_{\delta\Phi}$ to denote
the Siegel-Weil section in
$\Ind_{\til{P}^\square(\A)}^{\til{G}^\square(\A)}\chi_\psi\chi|\ |^s$ associated to
$\delta\Phi$.

For  a section $F$ in this induced representation, 
set the zeta function to be
\begin{equation}\label{eq:def_zeta_function}
  Z(f_1,f_2,s,F)=\int_{[\til {G}\times \til {G}]}
  f_1(g_1)\overline{f_2(g_2)}E(\til {\iota}(g_1,g_2),s,F)dg_1 dg_2.
\end{equation}
Thus
\begin{equation}\label{eq:theta-pair=zeta-function}
  \langle \Theta(f_1,\Phi_1),
  \Theta(f_2,\Phi_2)\rangle_\REG = 2^{-1} Z(f_1,f_2,0,F).
\end{equation}
By the basic identity in
\cite{MR892097} generalised to the
metaplectic case and by
\cite[Eq. (25)]{MR1166512}, the zeta function $Z(f_1,f_2,s,F)$ is
equal to
\begin{align*}
  &\int_{\til{G} (\A)}F( \til\iota(g,1),s)
  \int_{[\til {G}]} f_1(g'g)\overline{f_2(g')}dg' dg\\
  &=\int_{\til{G} (\A)}F( \til\iota(g,1),s).
  \langle  \pi(g)f_1,f_2\rangle dg.
\end{align*}
In fact the integrand is a function of $G (\A)$.
Suppose $F$ and $f_i$ are factorisable. Then the above factorises into
a product of local zeta integrals
\begin{equation*}
  Z(f_{1,v},f_{2,v},s,F_v)=\int_{\til { G} (k_v)}F_v( \iota(g_v,1),s)
  \langle\pi_v(g_v)f_{1,v},f_{2,v}\rangle dg_v.
\end{equation*}

Let $S$ be a finite set of places of $k$ containing all the
archimedean places, even places, outside which $\pi_v$ is an
unramified principal series representation, $f_{i,v}$ is $K_{G,v}$-invariant and
normalised, $F_v$ is a normalised spherical section and
$\psi_v$ is unramified. 
Then by \cite[Prop. 4.6]{MR1166512} the
local integral $Z(f_{1,v},f_{2,v},s,F_v)$ is equal to
\begin{equation*}
  \frac{L_v(s+\frac{1}{2},\pi_v \times \chi_v,\psi_v)}{\til{d}_{G^\square_v}(s)} 
\end{equation*}
where $L_v(s+\frac{1}{2},\pi_v \times \chi_v,\psi_v)$ is the Langlands
$L$-function as developed in \cite{gan:_repres_metap_group_i} and where 
\begin{equation*}
  \til{d}_{G^\square_v}(s)= \prod_{i=1}^{n}\zeta_v(2s+2i).
\end{equation*}
Note here we normalise the Haar measure on $\til{G }_v$ so that
$\til { K}_{G_v}$ has volume $1$. Our $\til{d}_{G^\square_v}$ is slightly different from that in \cite {MR1166512}, because we are using a different local $L$-factor. Compare with the statement in \cite [Prop.~6.1] {MR3006697}. Note also that the definition of this
$L$-function depends on the chosen additive character $\psi_v$.

For $v \in S$, the local $L$-factor is shown to be the `g.c.d' of the zeta integrals of `good' sections by Yamana in \cite{MR3211043}. Since we are using holomorphic sections, we need to account for the difference by having $\til{d}_{G^\square_v}(s)$ in our formula. Set
\begin{equation*}
  \cZ (f_{1,v},f_{2,v},s,F_v) =  Z(f_{1,v},f_{2,v},s,F_v) (L_v(s+\frac{1}{2},\pi_v \times \chi_v,\psi_v))^{-1} \til{d}_{G^\square_v}(s).
\end{equation*}
Then $\cZ (f_{1,v},f_{2,v},s,F_v)$ is entire and for all $s\in \CC$ and all places $v$, there exist $f_{1,v}, f_{2,v} \in \pi$ and $F_v \in \Ind_{\til{P}^\square(\A)}^{\til{G}^\square(\A)}\chi_\psi\chi|\ |^s$ such that it is non-vanishing. We have that
\begin{equation*}\label{eq:eis=unram*ram}
  Z (f_1,f_2,s,F) = \frac { L(s+\frac{1}{2},\pi \times \chi,\psi)} {\til{d}_{G^\square}(s)}\prod_{v\in S} \cZ (f_{1,v},f_{2,v},s,F_v).
\end{equation*}

We set $s$ to $0$ in the zeta function to get the Rallis
inner product formula.  
\begin{theorem}\label{thm:rallis_inner_product}\quad
  Suppose $m=2n+1$. Then
  \begin{equation*}
    \langle \Theta(f_1,\Phi_1),
    \Theta(f_2,\Phi_2)\rangle_\REG = 
    \frac{2^{-1} L(\frac{1}{2},\pi \times \chi,\psi)}{\til{d}_{G^\square}(0)} \cdot \prod_{v\in S} \cZ (f_{1,v},f_{2,v},0,F_v),
\end {equation*}
where $F$ is the Siegel-Weil section in $\Ind_{\til{P}^\square(\A)}^{\til{G}^\square(\A)}\chi_\psi\chi|\ |^s$ associated to the Schwartz function $\delta (\Phi_1\otimes \overline {\Phi_2})$.
\end{theorem}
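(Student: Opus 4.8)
The plan is to obtain the theorem as the value at $s=0$ of the global factorisation of the zeta integral $Z(f_1,f_2,s,F)$ that has been assembled in the discussion above, fed into the identity \eqref{eq:theta-pair=zeta-function}. Essentially all of the analytic work has already been done; what remains is to specialise at the central point and to certify that this specialisation is legitimate.

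First I would recall the chain of reductions leading to \eqref{eq:theta-pair=zeta-function}: unfolding $\langle\Theta(f_1,\Phi_1),\Theta(f_2,\Phi_2)\rangle_\REG$, transporting $\Phi_1\otimes\overline{\Phi_2}$ across the intertwining isometry $\delta$, and invoking the boundary-case Siegel-Weil formula (Theorem~\ref{thm:Siegel-Weil}) for the dual pair $(\rO(U),\til{G}^\square(\A))$ with $\dim U=2n+1$ gives $\langle\Theta(f_1,\Phi_1),\Theta(f_2,\Phi_2)\rangle_\REG = 2^{-1}Z(f_1,f_2,0,F)$ with $F=F_{\delta(\Phi_1\otimes\overline{\Phi_2})}$. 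Next I would invoke the metaplectic form of the basic identity of \cite{MR892097} to rewrite $Z(f_1,f_2,s,F)$ as the doubling integral $\int_{\til{G}(\A)}F(\til\iota(g,1),s)\langle\pi(g)f_1,f_2\rangle\,dg$, which for factorisable data splits into local zeta integrals. Separating the places $v\notin S$, where \cite[Prop.~4.6]{MR1166512} evaluates the local integral as $L_v(s+\tfrac12,\pi_v\times\chi_v,\psi_v)/\til{d}_{G^\square_v}(s)$, from the finite set $S$, where the normalised integrals $\cZ(f_{1,v},f_{2,v},s,F_v)$ are entire, and collecting the unramified factors into the complete $L$-function, yields $Z(f_1,f_2,s,F)=L(s+\tfrac12,\pi\times\chi,\psi)\,\til{d}_{G^\square}(s)^{-1}\prod_{v\in S}\cZ(f_{1,v},f_{2,v},s,F_v)$. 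Substituting $s=0$ into this and into \eqref{eq:theta-pair=zeta-function} produces the claimed formula.

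The hard part will be justifying the passage to $s=0$ rather than any single computation, so this is where I would concentrate. The factorisation of $Z$ is initially an identity of holomorphic functions only for $\Re s\gg 0$, where both the Eisenstein series and the doubling integral converge absolutely, and it must be continued to $s=0$ as an identity of meromorphic functions. Holomorphy of the left-hand side at $s=0$ is precisely what the boundary case of the Siegel-Weil formula supplies: for the doubled pair the relevant $s_0$ is $0$ and $E(\til\iota(g_1,g_2),s,F)$ is holomorphic there, so $Z(f_1,f_2,s,F)$ defined by \eqref{eq:def_zeta_function} is holomorphic at $s=0$. On the right, $\til{d}_{G^\square}(0)=\prod_{i=1}^{n}\zeta(2i)$ is a finite nonzero constant and each $\cZ(f_{1,v},f_{2,v},s,F_v)$ is entire; since for suitable data the product $\prod_{v\in S}\cZ(f_{1,v},f_{2,v},0,F_v)$ is nonzero, the holomorphy of $Z$ at $s=0$ forces $L(\tfrac12,\pi\times\chi,\psi)$ to be finite, and the theorem then merely records the resulting value. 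I would also keep track of the absolute convergence, coming from the cuspidality (hence rapid decay) of $f_1,f_2$ against the moderate growth of $I_\REG$, that makes the regularised inner product and the integral \eqref{eq:def_zeta_function} well defined in the first place.
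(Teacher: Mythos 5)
Your proposal is correct and follows essentially the same route as the paper: the theorem is obtained by specialising at $s=0$ the chain already assembled in Section~\ref{sec:inner_product} (regularised inner product $\to$ Siegel-Weil formula for the doubled pair $\to$ basic identity $\to$ local factorisation with the unramified computation of \cite{MR1166512} and the normalised local integrals $\cZ$). Your extra paragraph justifying the meromorphic continuation to $s=0$ and the holomorphy of $Z$ there is a welcome elaboration of a step the paper leaves implicit, but it is not a different argument.
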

Returning to \eqref{eq:eis=unram*ram} we deduce the possible location of poles of $L$-function.
\begin{proposition}\quad
  The poles of $L(s+\frac{1}{2},\pi \times \chi,\psi)$ in $\Re(s)\ge 1/2$ are simple and
  are contained in the set
  \begin{equation*}
    \{\frac{3}{2},\frac{5}{2},\cdots, n+\frac{1}{2}\}.
  \end{equation*}
\end{proposition}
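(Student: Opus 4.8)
The plan is to read the poles of $L(s+\frac{1}{2},\pi\times\chi,\psi)$ off the doubling identity proved just above the statement, rewritten as
\begin{equation*}
  L\Bigl(s+\tfrac{1}{2},\pi\times\chi,\psi\Bigr)=\til{d}_{G^\square}(s)\,\frac{Z(f_1,f_2,s,F)}{\prod_{v\in S}\cZ(f_{1,v},f_{2,v},s,F_v)}.
\end{equation*}
The left-hand side is intrinsic to $\pi$ and $\chi$, whereas every factor on the right depends on the auxiliary data $f_1,f_2,F$; I will use this freedom to discard the factor $\til{d}_{G^\square}(s)$ and the finite product of local factors $\cZ(f_{1,v},f_{2,v},s,F_v)$, reducing the question to the poles of the Siegel Eisenstein series $E(\cdot,s,F)$ on $\til{G}^\square(\A)$.

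First I would check that the two outer factors are harmless in $\Re(s)\ge\frac{1}{2}$. For such $s$ and $1\le i\le n$ one has $\Re(2s+2i)\ge 1+2i>1$, so each $\zeta(2s+2i)$ is holomorphic and non-vanishing; hence $\til{d}_{G^\square}(s)$ has neither poles nor zeros there. Fixing a point $s_0$ with $\Re(s_0)\ge\frac{1}{2}$, I would then invoke the non-vanishing statement recorded before Theorem~\ref{thm:rallis_inner_product} to choose, at each of the finitely many places $v\in S$, data making $\cZ(f_{1,v},f_{2,v},s_0,F_v)\neq 0$; since each $\cZ(f_{1,v},f_{2,v},s,F_v)$ is entire, the product is holomorphic and non-zero in a neighbourhood of $s_0$, and so is its reciprocal. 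Consequently any pole of $L$ at $s_0$ must come from a pole of $Z(f_1,f_2,s,F)$ at $s_0$.

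Because $f_1,f_2$ are cusp forms, the double integral \eqref{eq:def_zeta_function} converges and is holomorphic wherever $E(\til{\iota}(g_1,g_2),s,F)$ is, with residues obtained by pairing $f_1\otimes\overline{f_2}$ against the residue of $E$; hence the poles of $Z$ are among those of the Siegel Eisenstein series, and every pole of $L$ in $\Re(s)\ge\frac{1}{2}$ is a pole of $E(\cdot,s,F)$ there. At this stage I would invoke the Kudla--Rallis analysis of poles of Siegel Eisenstein series, transported to the metaplectic group $\til{G}^\square(\A)$ \cite{MR961164,MR1289491}: $E(\cdot,s,F)$ converges for $\Re(s)>(2n+1)/2$, and for the character $\chi_\psi\chi$ attached to the odd quadratic space $U$ its poles in the right half-plane are simple and descend by unit steps from the abscissa of convergence, one pole for each of the $n$ proper smaller members (of dimensions $1,3,\dots,2n-1$) of the Witt tower of $U$. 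These occupy exactly $\{\frac{3}{2},\frac{5}{2},\dots,n+\frac{1}{2}\}$ in $\Re(s)\ge\frac{1}{2}$, with $s=\frac{1}{2}$ not occurring since the space $U$ itself sits at $s=0$. Simplicity then propagates back through the identity: $Z$ has at most simple poles, and multiplication by the locally holomorphic non-vanishing factors $\til{d}_{G^\square}$ and $\bigl(\prod_{v\in S}\cZ(f_{1,v},f_{2,v},s,F_v)\bigr)^{-1}$ preserves this, giving the claim.

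The crux is this last input. Theorem~\ref{thm:Siegel-Weil} only yields holomorphy of $E$ at the boundary point $s=0$ and is silent about the poles at positive $s$; those require the full Kudla--Rallis study of the degenerate principal series $I(\chi_\psi\chi,s)$ and of its intertwining operators, carried out in the odd-dimensional metaplectic case. Verifying that the descent really stops at $\frac{3}{2}$, that no poles appear off the half-integer lattice, and that each pole is at most simple is the delicate point, and is precisely where the metaplectic situation must be argued with care rather than quoted verbatim from the symplectic references.
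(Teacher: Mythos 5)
Your strategy is the same as the paper's: solve the doubling identity for $L(s+\frac{1}{2},\pi\times\chi,\psi)$, dispose of $\til{d}_{G^\square}(s)$ and of the finite product $\prod_{v\in S}\cZ(f_{1,v},f_{2,v},s,F_v)$ by holomorphy and a choice of data making the local factors non-vanishing near the point in question, and reduce everything to the poles of the Siegel Eisenstein series on $\til{G}^\square(\A)$. The published proof is a three-line version of exactly this, and your explicit handling of $\til{d}_{G^\square}$ and of the ramified local factors fills in steps the paper leaves implicit.

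The problem is the key quoted input. The Eisenstein series $E(\cdot,s,F)$ lives on $\til{G}^\square(\A)$ with $G^\square=\Sp_{4n}$ of rank $2n$, and $F$ comes from the \emph{odd}-dimensional space $U$, which sits at $s_0=(\dim U-2n-1)/2=0$. All quadratic spaces contributing poles for this character have odd dimension, so the corresponding points $(m'-2n-1)/2$ are \emph{integers}: the poles of $E(\cdot,s,F)$ in $\Re(s)\ge 0$ are simple and contained in $\{1,2,\dots,n\}$ (this is precisely what the paper cites from Ichino, p.~216), not in $\{\frac{3}{2},\dots,n+\frac{1}{2}\}$ as you assert. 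Your own description is internally inconsistent on this point: a sequence ``descending by unit steps'' from the abscissa $n+\frac{1}{2}$ can never contain the point $s=0$ at which you correctly place $U$ itself; the half-integer pole set belongs to the even-dimensional Witt towers. Consequently the poles of $Z(f_1,f_2,s,F)$, and hence of $s\mapsto L(s+\frac{1}{2},\pi\times\chi,\psi)$, lie in $\{1,\dots,n\}$, and the set $\{\frac{3}{2},\frac{5}{2},\dots,n+\frac{1}{2}\}$ in the proposition is the corresponding set of values of the \emph{argument} $s+\frac{1}{2}$, i.e.\ the poles of $w\mapsto L(w,\pi\times\chi,\psi)$ in $\Re(w)\ge 1$. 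Your final set agrees with the statement only because you attached it to the wrong variable; as written the argument does not establish the proposition. The repair is simple: use the correct integral pole set for $E$ and perform the shift $w=s+\frac{1}{2}$ at the very end.
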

\begin{proof}
 We note that the poles of
  $L(s+\frac{1}{2},\pi \times \chi,\psi)$ are contained in the set of
  poles of $\til{d}_{G_{2}}(s)E(s,\iota(g_1,g_2),F)$. The poles of
  the Eisenstein series in $\Re(s)\ge 0$ are simple and are contained in
  $\{1,2,\ldots,n\}$, c.f. \cite[Page
  216]{MR1863861}. From this we get the proposition.
\end{proof}

We give an analogue of Kudla and Rallis's
\cite[Thm. 7.2.5]{MR1289491} in the case where $m=2n+1$. 
Let $\Theta^{U}(\pi)$ denote the space of theta lift of $\pi$ to the orthogonal group of $U$. Recall that we have defined the sub-modules $R_n(U_v)$ of $\Ind_{\til{P}^\square(k_v)}^{\til{G}^\square(k_v)}\chi_{\psi,v} \chi_v$ for each place $v$ to be the image of the map
\begin{align*}
  \cS(U (k_v)^n)&\rightarrow  \Ind_{\til{P}^\square(k_v)}^{\til{G}^\square(k_v)}\chi_{\psi,v} \chi_v\\
  \Phi &\mapsto \omega(g)\Phi(0).
\end{align*}
The parameter $s$ here is $0$ since we require the dimension $m$ of $U$ to be equal to $2n+1$.
The structure of $\Ind_{\til{P}^\square(k_v)}^{\til{G}^\square(k_v)}\chi_{\psi,v} \chi_v$  is summarised by \cite [Propositions~5.2, 5.3] {MR3279536}:
\begin{proposition}\quad
 We have
 \begin{equation*}
   \Ind_{\til{P}^\square(k_v)}^{\til{G}^\square(k_v)}\chi_{\psi,v} \chi_v = \bigoplus_{U_v: \chi_{U_v} = \chi} R_n(U_v).
 \end{equation*}
\end{proposition}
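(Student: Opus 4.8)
The plan is to treat this as the boundary case $m = 2n+1 = \rank G^\square + 1$ of the local degenerate principal series at the distinguished point $s = s_0 = 0$, and to reduce it to the local structure theory already invoked for the rank-$n$ group in Section~\ref{sec:pf_main_thm}, now applied verbatim to the doubled group $G^\square$ of rank $2n$. First I would check that for each local quadratic space $U_v$ of dimension $2n+1$ the section $g \mapsto \omega(g)\Phi(0)$ is $\til{G}^\square(k_v)$-equivariant and transforms under $\til{P}^\square(k_v)$ by the character $\chi_{U_v}\chi_{\psi,v}|\ |^{s_0}$; this is an immediate consequence of the explicit Weil-representation formulas \eqref{eq:weil_rep_nilp}–\eqref{eq:chi-U-v} applied with symplectic rank $2n$, together with the fact that $s_0 = 0$ in this dimension. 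Consequently $R_n(U_v)$ is a $\til{G}^\square(k_v)$-submodule, and since $\chi_v$ depends only on the square class of the discriminant, the image lies in $\Ind_{\til{P}^\square(k_v)}^{\til{G}^\square(k_v)}\chi_{\psi,v}\chi$ precisely when $\chi_{U_v} = \chi$, which explains the indexing set of the sum. The invariant-distribution theorem of \cite{MR743016} then identifies $R_n(U_v) \cong \cS(U_v^n)_{H_v}$ exactly as in Section~\ref{sec:pf_main_thm}.

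Next I would invoke, for the doubled group $G^\square$, the same irreducibility and unitarizability already used there: each $R_n(U_v)$ is irreducible and unitarizable, by \cite[Cor.~3.7]{MR1194967} in the symplectic case and \cite[Cor.~4.14]{MR2843101} in the metaplectic case. It then remains to show that these submodules exhaust $\Ind_{\til{P}^\square(k_v)}^{\til{G}^\square(k_v)}\chi_{\psi,v}\chi$ and that their sum is direct. At a non-archimedean place the local classification of odd-dimensional quadratic spaces of fixed discriminant yields exactly the two spaces $U_v,U_v'$ distinguished by their Hasse invariant, so the assertion is precisely the two-term decomposition $\Ind \cong R_n(U_v)\oplus R_n(U_v')$ recorded in Section~\ref{sec:pf_main_thm}. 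At an archimedean place the family of spaces with $\chi_{U_v} = \chi$ is larger, indexed by the admissible signatures of that fixed discriminant, and the corresponding multiplicity-free decomposition is the content of the analysis summarized in \cite[Prop.~7.2]{MR3166215} and carried out uniformly in \cite[Props.~5.2, 5.3]{MR3279536}.

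The main obstacle is exactly this last point: proving exhaustiveness and directness of the sum simultaneously at all places, and in particular over the larger archimedean families, where the metaplectic analytic-continuation subtleties flagged in the introduction obstruct a naive direct argument. The cleanest route, which I would follow, is to quote the structure theorem of \cite[Props.~5.2, 5.3]{MR3279536}, whose proof reduces the decomposition to the Kudla--Rallis analysis of the doubling degenerate principal series together with its metaplectic refinement in \cite{MR2843101}; the equivariance and character-matching verified in the first step then identify the irreducible constituents appearing there with our submodules $R_n(U_v)$, which completes the argument.
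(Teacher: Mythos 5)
Your proposal is correct and ultimately rests on the same source as the paper: the paper offers no proof of this proposition at all, simply quoting \cite[Propositions~5.2, 5.3]{MR3279536}, and your argument, after some routine (and correct) verifications of equivariance, the identification $R_n(U_v)\isom \cS(U_v^n)_{H_v}$, and irreducibility, likewise defers the essential content --- exhaustion and directness of the sum, especially at archimedean places --- to that same citation. So the approach is essentially identical, just with more of the surrounding scaffolding made explicit.
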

\begin{theorem}\label{thm:nonvanishing_theta_lifts}\quad Let $\chi$ be a quadratic character.
Suppose
    $L(s,\pi \times \chi,\psi)$ does not vanish at $s=1/2$.
    Then there exists a quadratic space $U$ over $k$ with dimension
    $m$ and $\chi_{U}=\chi$ such that $\Theta^{U}(\pi) \neq 0$.
 \end{theorem}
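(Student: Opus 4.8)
The plan is to read the non-vanishing of the global theta lift off the Rallis inner product formula (Theorem~\ref{thm:rallis_inner_product}) by a local-to-global argument. The first point is that for any global quadratic space $U$ of dimension $m=2n+1$ with $\chi_U=\chi$, the regularised pairing $\langle\Theta(f_1,\Phi_1),\Theta(f_2,\Phi_2)\rangle_\REG$ vanishes as soon as either theta lift does; hence its non-vanishing for a single choice of data already forces $\Theta(f_1,\Phi_1)\neq 0$, and so $\Theta^U(\pi)\neq 0$. By Theorem~\ref{thm:rallis_inner_product} this pairing equals $2^{-1}\til{d}_{G^\square}(0)^{-1}L(\frac12,\pi\times\chi,\psi)\prod_{v\in S}\cZ(f_{1,v},f_{2,v},0,F_v)$. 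As $\til{d}_{G^\square}(0)$ is a finite non-zero product of completed zeta values and $L(\frac12,\pi\times\chi,\psi)\neq 0$ by hypothesis, the whole problem reduces to producing a global $U$ for which every local factor $\cZ(f_{1,v},f_{2,v},0,F_v)$ can be made non-zero.

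Next I would pin down the local factors. For $v\notin S$ the data are unramified, $\cZ_v\equiv 1$, and $U_v$ is forced to be the split space, which is admissible because unramified representations lift non-trivially to the split orthogonal group. For $v\in S$ I would combine the decomposition $\Ind_{\til{P}^\square(k_v)}^{\til{G}^\square(k_v)}\chi_{\psi,v}\chi_v=\bigoplus_{U_v:\chi_{U_v}=\chi}R_n(U_v)$ with the local theory of the doubling zeta integral: since $R_n(U_v)$ is the image of the Siegel-Weil sections attached to $U_v$, the doubling integral is non-zero on $R_n(U_v)$ precisely when the local theta lift $\Theta_{U_v}(\pi_v)$ is non-zero. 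Read through this dictionary, the local non-vanishing of $\cZ$ recorded before Theorem~\ref{thm:rallis_inner_product} says that at every place at least one local space is admissible.

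The decisive step is to glue these local choices into a single global $U$. The quadratic spaces of dimension $2n+1$ with $\chi_U=\chi$ are classified by their families of local Hasse invariants $\{\epsilon_v\}$, subject to $\epsilon_v=+1$ for almost all $v$ and to the reciprocity relation $\prod_v\epsilon_v=1$; a family obeying this relation is exactly a coherent one arising from a genuine global space. Here I would invoke the local theta dichotomy for the pair $(\til{\Sp}_{2n},\rO_{2n+1})$, a consequence of the conservation relation for the theta correspondence: at each place the admissible local space is singled out by the local sign $\epsilon(\frac12,\pi_v\times\chi_v,\psi_v)$; in the balanced case both spaces occur, which only enlarges the admissible set. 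Taking $U_v$ with $\epsilon_v$ equal to this sign at every place yields an admissible space everywhere, and the product of these signs is by definition the global root number $\epsilon(\frac12,\pi\times\chi,\psi)$.

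I would then close with the functional equation. Because $L(\frac12,\pi\times\chi,\psi)\neq 0$, the root number $\epsilon(\frac12,\pi\times\chi,\psi)$ must be $+1$, for a value $-1$ would force the central value to vanish; hence $\prod_v\epsilon_v=1$, the chosen family is coherent, and it defines a global quadratic space $U$ with $\chi_U=\chi$ on which every $\cZ_v\neq 0$. The Rallis inner product formula then returns a non-zero pairing, whence $\Theta^U(\pi)\neq 0$. I expect the main obstacle to be the middle step: upgrading the abstract non-vanishing of $\cZ$ on an unspecified summand $R_n(U_v)$ to the sharp dichotomy that names the admissible $\epsilon_v$ as the local root number. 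It is exactly this identification that makes the product of local signs coincide with the global root number and lets the functional equation guarantee coherence; lacking it, one knows only that some admissible local space exists at each place and cannot control whether the resulting family glues to a genuine global $U$.
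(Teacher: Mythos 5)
Your opening reduction is the same as the paper's: since $L(\tfrac12,\pi\times\chi,\psi)\neq 0$ and $\til{d}_{G^\square}(0)\neq 0$, everything comes down to choosing, for each $v\in S$, data with $\cZ(f_{1,v},f_{2,v},0,F_v)\neq 0$ and with $F_v$ lying in a single summand $R_n(U_v)$, and then gluing the local spaces $U_v$ into a global $U$. The divergence --- and the genuine gap --- is in the gluing step, which you yourself flag as the main obstacle. Your route through epsilon dichotomy and the functional equation rests on two links you do not establish. First, the equivalence between non-vanishing of $\cZ_v$ on $R_n(U_v)$ and non-vanishing of the local theta lift, together with the identification of the admissible Hasse invariant with $\epsilon(\tfrac12,\pi_v\times\chi_v,\psi_v)$ (up to normalizing constants whose global product is $1$), are substantial theorems; in particular epsilon dichotomy is \emph{not} a consequence of the conservation relation --- conservation tells you that exactly one member of a dichotomy pair receives the lift, but says nothing about which one in terms of the local root number, and that identification (Gan--Savin) was not available at archimedean places. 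Second, at a real place the summands of $\Ind_{\til{P}^\square(k_v)}^{\til{G}^\square(k_v)}\chi_{\psi,v}\chi_v$ are indexed by all signatures with the given discriminant, not by a two-element set of Hasse invariants, so the ``two spaces, pick the one named by the sign'' picture does not literally apply and your coherence bookkeeping is incomplete exactly where it matters.

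The paper closes this gap with a different and much cheaper global input, which is absent from your proposal: the Eisenstein series attached to an \emph{incoherent} section of $\Ind_{\til{P}^\square(\A)}^{\til{G}^\square(\A)}\chi_\psi\chi|\ |^s$ vanishes at $s=0$ (\cite[Prop.~6.2]{MR3279536}). One chooses $F_v\in R_n(U_v)$ for $v\in S$ with $\cZ_v(0)\neq 0$ --- possible by linearity, since $\cZ_v$ is non-vanishing on the full induced space and that space is the direct sum of the $R_n(U_v)$ --- and the spherical section for $v\notin S$. If the resulting family $\{U_v\}$ were incoherent, then $E(\cdot,s,F)|_{s=0}=0$ and hence $Z(f_1,f_2,0,F)=0$; but $Z(f_1,f_2,0,F)=L(\tfrac12,\pi\times\chi,\psi)\,\til{d}_{G^\square}(0)^{-1}\prod_{v\in S}\cZ_v(0)\neq 0$. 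So coherence is forced, the family defines a global $U$ with $\chi_U=\chi$, and Theorem~\ref{thm:rallis_inner_product} gives $\langle\Theta(f_1,\Phi_1),\Theta(f_2,\Phi_2)\rangle_\REG\neq 0$, whence $\Theta^{U}(\pi)\neq 0$. No root numbers, no functional equation and no local dichotomy enter. To salvage your route you would have to supply the archimedean epsilon dichotomy and the local zeta--theta equivalence; otherwise the incoherent-vanishing argument is the missing idea.
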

\begin{proof}
For $v\in S$ we choose $f_{1,v}, f_{2,v}\in \pi$ and $F_v\in \Ind_{\til{P}^\square(\A)}^{\til{G}^\square(\A)}\chi_\psi\chi|\ |^s$ such that $\cZ (f_{1,v},f_{2,v},s,F_v)$ is non-vanishing at $s=0$. Furthermore because of the module structure of $\Ind_{\til{P}^\square(k_v)}^{\til{G}^\square(k_v)}\chi_{\psi,v} \chi_v$, we may take $F_v$ to belong to some $R_n (U_v)$ with $\chi_{U_v}=\chi_v$. By \cite [Prop.~6.2] {MR3279536}, the Eisenstein series of an incoherent section of $\Ind_{\til{P}^\square(\A)}^{\til{G}^\square(\A)}\chi_\psi\chi|\ |^s$ vanishes at $s=0$. In this way we may assume that $F$ belongs to $\prod_v R_n (U_v)$ for   a global quadratic space $U$.

  As $L(s + \frac {1} {2},\pi \times \chi,\psi)$ is assumed to be non-vanishing at $s=0$, $\til{d}_{G^\square}(s)$ does not have a pole at $s=0$ and the data are chosen so that $\cZ (f_{1,v},f_{2,v},s,F_v)$ is non-vanishing at $s=0$,  the regularised pairing of theta lifts $\langle \Theta(f_1,\Phi_1),
    \Theta(f_2,\Phi_2)\rangle_\REG$ is non-vanishing. This means that $\Theta^{U}(\pi) \neq 0$ for this $U$.
\end{proof}

\section{Conclusion}
\label{sec:conclusion}

The Siegel-Weil formula has been widely used in the literature and much work has been done toward its proof in various cases. In this article we have written down a complete proof for the regularised Siegel-Weil formula in the boundary case and thus closed the gap in literature. From it we have deduced  the Rallis inner product formula in the critical case. As this case of the Rallis inner product formula expresses the central value of the L-function $L (s,\pi\times\chi,\psi)$ in terms of the inner product of theta lifts, it is of particular importance. It also forms the foundation of the arithmetic Rallis inner product formula which relates the central derivative of the L-function to the conjectured Beilinson-Bloch height pairing of arithmetic theta lifts. This will be part of our future research project.



\begin{thebibliography}{99}
\bahao\baselineskip 11.5pt
\bibitem{MR3006697}
Gan W T.
 Doubling zeta integrals and local factors for metaplectic groups.
  Nagoya Math. J.,  2012, 208:67--95

\bibitem{MR3166215}
Gan W T, Ichino A.
 Formal degrees and local theta correspondence.
  Invent. Math., 195(3):509--672, 2014

\bibitem{MR3279536}
Gan W T,  Qiu Y,  Takeda S.
 The regularized {S}iegel-{W}eil formula (the second term identity)
  and the {R}allis inner product formula.
  Invent. Math.,  2014, 198(3):739--831

\bibitem{gan:_repres_metap_group_i}
Gan W T and  Savin G.
Representations of metaplectic groups I: epsilon dichotomy and local Langlands correspondence.
Compos. Math., 2012, 148(6):1655--1694

\bibitem{MR892097}
 Gelbart S,  Piatetski-Shapiro I, Rallis S.
  Explicit constructions of automorphic {$L$}-functions, volume
  1254 of  Lecture Notes in Mathematics.
 Springer-Verlag, Berlin, 1987

\bibitem{MR777342}
 Howe R.
 On a notion of rank for unitary representations of the classical
  groups.
 In  Harmonic analysis and group representations, 
  Liguori, Naples, 1982, 223--331

\bibitem{MR1863861}
 Ichino A.
 On the regularized {S}iegel-{W}eil formula.
  J. Reine Angew. Math.,  2001, 539:201--234

\bibitem{MR2064052}
 Ichino A.
 A regularized {S}iegel-{W}eil formula for unitary groups.
  Math. Z.,  2004, 247(2):241--277

\bibitem{MR1295945}
 Ikeda T.
 On the theory of {J}acobi forms and {F}ourier-{J}acobi coefficients
  of {E}isenstein series.
  J. Math. Kyoto Univ., 1994, 34(3):615--636

\bibitem{MR1411571}
Ikeda T.
 On the residue of the {E}isenstein series and the {S}iegel-{W}eil
  formula.
  Compositio Math.,  1996, 103(2):183--218

\bibitem{kudla:_notes_local_theta_corres}
 Kudla S.
 Notes on the local theta correspondence.
 Notes from 10 lectures given at the European School on Group Theory, \url{http://www.math.toronto.edu/~skudla/castle.pdf},
  September 1996.

\bibitem{MR1491448}
Kudla S.
 Central derivatives of {E}isenstein series and height pairings.
  Ann. of Math., 1997, (2), 146(3):545--646

\bibitem{MR946349}
 Kudla S, Rallis S.
 On the {W}eil-{S}iegel formula.
  J. Reine Angew. Math., 1988, 387:1--68

\bibitem{MR961164}
 Kudla S, Rallis S.
 On the {W}eil-{S}iegel formula. {II}. {T}he isotropic convergent
  case.
  J. Reine Angew. Math., 1988, 391:65--84

\bibitem{MR1194967}
 Kudla S, Rallis S.
 Ramified degenerate principal series representations for {${\rm
  Sp}(n)$}.
  Israel J. Math., 1992, 78(2-3):209--256

\bibitem{MR1289491}
 Kudla S, Rallis S.
 A regularized {S}iegel-{W}eil formula: the first term identity.
  Ann. of Math., (2), 140(1):1--80, 1994.

\bibitem{MR1166512}
Li J S.
 Nonvanishing theorems for the cohomology of certain arithmetic
  quotients.
  J. Reine Angew. Math., 1992, 428:177--217

\bibitem{MR743016}
Rallis S.
 On the {H}owe duality conjecture.
  Compositio Math.,  1984, 51(3):333--399

\bibitem{MR887329}
Rallis S.
  {$L$}-functions and the oscillator representation, volume 1245
  of  Lecture Notes in Mathematics.
 Springer-Verlag, Berlin, 1987.

\bibitem{MR1197062}
Rao R R.
 On some explicit formulas in the theory of {W}eil representation.
  Pacific J. Math., 1993, 157(2):335--371

\bibitem{MR0344216}
 Serre J-P.
  A course in arithmetic.
 Springer-Verlag, New York, 1973.
 Translated from the French, Graduate Texts in Mathematics, No. 7.


\bibitem{MR0223373}
 Weil A.
 Sur la formule de {S}iegel dans la th\'eorie des groupes classiques.
  Acta Math.,  1965, 113:1--87

\bibitem{MR2822859}
 Yamana S.
 On the {S}iegel-{W}eil formula: the case of singular forms.
  Compos. Math.,  2011, 147(4):1003--1021

\bibitem{MR3211043}
 Yamana S.
 L-functions and theta correspondence for classical groups.
  Invent. Math., 2014, 196(3):651--732

\bibitem{MR2843101}
Zorn C.
 Theta dichotomy and doubling epsilon factors for {$\widetilde{\rm
  Sp}_n(F)$}.
  Amer. J. Math., 2011, 133(5):1313--1364

\end{thebibliography}
\end{document}